\documentclass[11pt]{amsart}

\usepackage{amsmath, amssymb, amsthm}
\usepackage{mathtools}
\usepackage{geometry}
\usepackage{enumitem}
\usepackage{microtype}
\usepackage{tikz}
\usetikzlibrary{calc,intersections}
\usepackage{hyperref}
\usepackage{upref}
\usepackage{zref-clever}
\usepackage{zref-user}
\zcsetup{
    cap,
    nameinlink=false,
    sort=false
}

\allowdisplaybreaks[4]

\theoremstyle{plain}
\newtheorem{thm}{Theorem}[section]
\newtheorem{lem}[thm]{Lemma}
\newtheorem{prop}[thm]{Proposition}
\newtheorem{cor}[thm]{Corollary}

\theoremstyle{definition}
\newtheorem{defi}[thm]{Definition}
\newtheorem{ex}[thm]{Example}
\theoremstyle{remark}
\newtheorem{rem}[thm]{Remark}

\AddToHook{env/thm/begin}{\zcsetup{countertype={thm=theorem}}}
\AddToHook{env/lem/begin}{\zcsetup{countertype={thm=lemma}}}
\AddToHook{env/prop/begin}{\zcsetup{countertype={thm=proposition}}}
\AddToHook{env/cor/begin}{\zcsetup{countertype={thm=corollary}}}
\AddToHook{env/conj/begin}{\zcsetup{countertype={thm=conjecture}}}
\AddToHook{env/obs/begin}{\zcsetup{countertype={thm=observation}}}
\AddToHook{env/cons/begin}{\zcsetup{countertype={thm=construction}}}
\AddToHook{env/ques/begin}{\zcsetup{countertype={thm=question}}}
\AddToHook{env/cla/begin}{\zcsetup{countertype={thm=claim}}}
\AddToHook{env/fact/begin}{\zcsetup{countertype={thm=fact}}}
\AddToHook{env/defi/begin}{\zcsetup{countertype={thm=definition}}}
\AddToHook{env/ex/begin}{\zcsetup{countertype={thm=example}}}
\AddToHook{env/rem/begin}{\zcsetup{countertype={thm=remark}}}
\AddToHook{env/note/begin}{\zcsetup{countertype={thm=note}}}
\AddToHook{env/comm/begin}{\zcsetup{countertype={thm=comment}}}
\numberwithin{equation}{section}
\numberwithin{figure}{section}
\theoremstyle{plain}
\newtheorem*{thm*}{Theorem}
\newtheorem*{cor*}{Corollary}

\newcommand{\Cb}{\mathbb{C}}
\newcommand{\Ib}{\mathbb{I}}
\newcommand{\Rb}{\mathbb{R}}
\newcommand{\Sb}{\mathbb{S}}
\newcommand{\Tb}{\mathbb{T}}
\newcommand{\Zb}{\mathbb{Z}}
\newcommand{\Ac}{\mathcal{A}}
\newcommand{\Bc}{\mathcal{B}}
\newcommand{\Fc}{\mathcal{F}}
\newcommand{\Hc}{\mathcal{H}}
\newcommand{\Oc}{\mathcal{O}}
\newcommand{\Sc}{\mathcal{S}}
\newcommand{\Wc}{\mathcal{W}}
\newcommand{\Sfrk}{\mathfrak{S}}
\newcommand{\abs}[1]{{\left\lvert #1\right\rvert}}
\DeclareMathOperator{\codim}{codim}
\DeclareMathOperator{\rk}{rank}
\DeclareMathOperator{\sgn}{sgn}

\title{Non-vanishing of homotopy groups of Manin--Schechtman arrangements}
\author{Takuya Saito}
\address{Institute for Chemical Reaction Design and Discovery, Hokkaido University}
\email{saito@icredd.hokudai.ac.jp}

\author{So Yamagata}
\address{Department of Applied Mathematics, Fukuoka University}
\email{so.yamagata@fukuoka-u.ac.jp}

\subjclass[2020]{55P20,52C35,14N20}
\keywords{Hyperplane arrangements, homotopy groups, $K(\pi,1)$ space, Manin--Schechtman arrangements}
\date{}

\begin{document}
\begin{abstract}
  One of the central problems in the topology of hyperplane arrangements is determining whether the complement is a $K(\pi,1)$-space.
  In this paper, we study Manin--Schechtman arrangements, introduced as higher-dimensional analogs of the braid arrangement, and prove that their complements have non-vanishing higher homotopy groups.
  Consequently, these arrangements fail to be $K(\pi,1)$ in a broad range of cases.
\end{abstract}
\maketitle
\section{Introduction}
Let $\Ac$ be a complex hyperplane arrangement in $\Cb^\ell$, and let $M(\Ac) = \Cb^\ell \setminus \bigcup_{H \in \Ac} H$ denote its complement.
One of the central problems in the topology of hyperplane arrangements is determining whether $M(\Ac)$ is a $K(\pi,1)$-space, that is, whether $\pi_p(M(\Ac))$ vanishes for all $p \ge 2$.
If $M(\Ac)$ is $K(\pi,1)$, the arrangement $\Ac$ is called a $K(\pi,1)$ arrangement.
Classical examples of $K(\pi,1)$ arrangements include the braid arrangement, whose complement is the configuration space of distinct points in $\Cb$ \cite{FN62}.
More generally, Brieskorn \cite{Bri73} raised the question of whether Coxeter (reflection) arrangements are $K(\pi,1)$.
The $K(\pi,1)$ property was established for complexified simplicial arrangements by Deligne \cite{Del72}, and for reflection arrangements associated with complex reflection groups by Bessis \cite{Bes15}.
It is also known that fiber-type arrangements, equivalently, supersolvable arrangements, are $K(\pi,1)$ (see \cite{FR87, Ter86}).
More recently, Paolini and Salvetti \cite{PS21} proved the $K(\pi,1)$ property for a large class of arrangements arising from affine Artin groups. On the other hand, many arrangements fail to be $K(\pi,1)$.
Hattori \cite{Hat75} showed that generic arrangements are not $K(\pi,1)$, and further examples have been studied in \cite{PS02, Yos08}.
Concerning structural characterizations, Stanley \cite{Sta07} proved that a graphic arrangement is supersolvable, equivalently, $K(\pi,1)$ if and only if the underlying graph is chordal.
Falk \cite{Fal88, Fal94, Fal95} investigated various topological properties such as tests for the $K(\pi,1)$ property.

Recently, Yoshinaga \cite{Yos24a} introduced a new approach to detecting the non-$K(\pi,1)$ property for the complexifications of real arrangements.
This criterion is formulated in terms of a filtration
\begin{equation*}
  \Sigma_1(\Ac) \supseteq \Sigma_2(\Ac) \supseteq \cdots,
\end{equation*}
which we call the $\Sigma$-\emph{filtration}, where $\Sigma_p(\Ac)$ consists of sign vectors that are consistent on all localizations of codimension at most $p$ (see \zcref{defi:consistency} for details).
A proper inclusion in this filtration implies the existence of a non-trivial higher homotopy group, and hence gives an obstruction to the $K(\pi,1)$ property (see \zcref{sec:consistency}).

In this paper, we study the \emph{Manin--Schechtman arrangements} introduced as higher-dimensional generalizations of the braid arrangement in \cite{MS89}.
These arrangements arise from the space of parallel translates of a fixed generic arrangement and exhibit rich combinatorial and geometric structures.
There are two classes of generic arrangements, depending on whether the combinatorics of the associated Manin--Schechtman arrangement are independent of the choice of the underlying arrangement.
Following \cite{BB97}, we call such arrangements \emph{very generic} if the combinatorics are independent of this choice, and \emph{non-very generic} otherwise.
If the original arrangement $\Ac$ of $n$ hyperplanes in $k$-dimensional space is very generic, then the intersection lattice of its associated Manin--Schechtman arrangement $\Bc(n,k)$ admits a uniform combinatorial description in terms of the poset $P(n,k)$ introduced and studied in \cite{Ath99, BB97}.
Their combinatorics and fundamental groups have been studied in recent work (see, for instance, \cite{ABFKST23,Sai26,Yam23}).

However, much less is known about the higher homotopy-theoretic properties of Manin--Schechtman arrangements.
As pointed out in \cite{FR00}, these arrangements are generally neither free, fiber-type, nor simplicial, making their topology difficult to analyze.
In particular, while almost all rank three cases are known not to be $K(\pi,1)$ \cite{FR87,FR00}, the general case remains open.

The purpose of this paper is to investigate Yoshinaga's $\Sigma$-filtration (\zcref{thm:Yoshinaga}) in the context of Manin--Schechtman arrangements associated with very generic arrangements.
These arrangements provide a natural testing ground for the $\Sigma$-filtration:
they exhibit both cases where the filtration detects nontrivial higher homotopy groups and an exceptional clean case where the obstruction disappears.
Our main result shows that the $\Sigma$-filtration is sufficiently sensitive to detect higher homotopy groups in a broad range of cases.
More precisely, we prove the following.
\begin{thm*}[\zcref{thm:sigma-jump}]
  The following holds:
  \begin{itemize}
    \item $\Sigma_2(\Bc(n,2)) = \Sigma_3(\Bc(n,2))$;
    \item if $k \ge 3$, then there exists a Manin--Schechtman arrangement $\Bc(n,k)$ such that $\Sigma_2(\Bc(n,k)) \supsetneq \Sigma_3(\Bc(n,k))$;
    \item if $n - k \ge 3$, $k \ge 2$, and $3\le p\le n-k-1$, then there exists a Manin--Schechtman arrangement $\Bc(n,k)$ such that $\Sigma_p(\Bc(n,k)) \supsetneq \Sigma_{p+1}(\Bc(n,k))$.
  \end{itemize}
\end{thm*}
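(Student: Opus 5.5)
The three assertions need different arguments: the first is an equality, the other two are strict inclusions. For the strict inclusions we will exhibit explicit sign vectors. Throughout we use the description of the intersection lattice of $\Bc(n,k)$ via the poset $P(n,k)$ for very generic arrangements. Its hyperplanes are indexed by $(k+1)$-subsets $S\subset[n]$, and $H_S$ is the locus where the translated hyperplanes indexed by $S$ have a common point. Its codimension-$p$ flats are described by families of subsets $S_1,\dots,S_m$ satisfying the rank condition of $P(n,k)$. We also use the description of $\Sigma_p$ from \zcref{defi:consistency}: a sign vector on $\Bc(n,k)$ lies in $\Sigma_p$ if, for every flat $X$ of codimension at most $p$, its restriction to the localization $\Bc_X$ is the sign vector of a chamber of $\Bc_X$.

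\textbf{The case $k=2$.} The codimension-$3$ flats of $\Bc(n,2)$ are of a small number of types:
\begin{itemize}
  \item generic intersections of three hyperplanes, for which the localization is Boolean or a product of smaller localizations;
  \item flats coming from $4$-element and $5$-element subsets of $[n]$, for which the localization is the Manin--Schechtman arrangement $\Bc(4,2)$ or $\Bc(5,2)$ (or discriminantal pieces of these), up to product factors.
\end{itemize}
For each type we check that any sign vector consistent on all codimension-$\le 2$ localizations is already a chamber of the codimension-$3$ localization. Products and Boolean pieces are immediate. For $\Bc(5,2)$ we use the known oriented-matroid (higher Bruhat order) description. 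Its chambers are exactly the sign vectors on the $\binom{5}{3}$ triples whose restriction to every $4$-subset is "consistent", that is, of the form $+\cdots+-\cdots-$ or its reverse in lexicographic order. This is a codimension-$2$ condition, so $\Sigma_2=\Sigma_3$ there. This is essentially the Ziegler/Manin--Schechtman characterization of the higher Bruhat order $B(n,2)$.

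\textbf{Strict inclusions.} We choose a small flat $X$ whose localization $\Bc_X$ has a sign vector that is locally consistent in codimension $\le p$ but is not a chamber. We then extend it by arbitrary chamber signs away from $X$, using that every sign vector of $\Bc$ restricts to a covector-compatible one on the other localizations. More precisely, we take a chamber $C$ near a generic point of $X$ and modify only the signs of the hyperplanes containing $X$. Localizations not containing $X$ then see signs agreeing with a nearby chamber.
\begin{itemize}
  \item For $k\ge3$, $p=2$, take the flat associated with $k+3$ indices. In a non-very-generic-free part of $P(n,k)$, a codimension-$3$ localization such as the one coming from $\Bc(k+3,k)$ contains a cyclic pattern: a choice of signs on the $k+1$-subsets of a $(k+3)$-set whose restriction to each $(k+2)$-subset is consistent, but which is not realizable. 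This is where $k\ge3$ enters, since for $k=2$ the Bruhat-order argument above forbids such patterns.
  \item For $3\le p\le n-k-1$, use the "generic-type" flats of $\Bc(n,k)$ whose localization is a generic arrangement of $p+1$ hyperplanes in $\Cb^{p}$. These arise from disjoint families of subsets, which exist once $n-k\ge p+1$. Hattori's sign vector there, the one avoiding the bounded-chamber pattern $(-,\dots,-)$ relative to a suitable orientation, is consistent on all proper localizations, which are Boolean, but is not a chamber. Hence it lies in $\Sigma_p\setminus\Sigma_{p+1}$.
\end{itemize}

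\textbf{Main obstacle.} The hardest step is verifying, inside the poset $P(n,k)$, that such flats exist with exactly the required localization. For the generic case we need $p+1$ hyperplanes through a codimension-$p$ flat with all proper intersections independent. This requires choosing the $(k+1)$-subsets so that no smaller subfamily triggers an extra dependency in $P(n,k)$. We would try the subsets $S_i=\{1,\dots,k-1\}\cup\{k-1+i,\,k+i\}$ and check the rank function directly.
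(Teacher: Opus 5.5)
\textbf{There are genuine gaps in both strict inclusions, and the approach to the third bullet cannot work as stated.}

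\emph{Third bullet.} The flats you want do not exist in a very generic $\Bc(n,k)$. By \zcref{prop:localization-direct-sum}, every localization essentializes to $\bigoplus_{T\in\Tb(X)}\Bc(T,k)^{\mathrm{ess}}$. A generic arrangement of $p+1\ge 3$ hyperplanes of rank $p$ is irreducible, so it would have to be a single factor $\Bc(T,k)$ with $\abs{T}=k+p$. But such a factor has $\binom{k+p}{k+1}>p+1$ hyperplanes when $k\ge 2$. Put differently, $p+1$ hyperplanes meeting in codimension $p$ with all proper subfamilies independent is exactly a non-very generic intersection, and condition~(iii) of $P(n,k)$ rules it out.

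Your trial family fails at once. Since $\abs{S_1\cap S_2}=k$, the intersection $D_{S_1}\cap D_{S_2}=D_{S_1\cup S_2}$ lies on all $k+2$ hyperplanes $D_I$ with $I\subset S_1\cup S_2$. Disjoint families give only products, and \zcref{prop:mixed-no-top-jump} excludes a jump there. There is also an index shift: a codimension-$p$ flat carrying $p+1$ generic hyperplanes would give $\Sigma_{p-1}\supsetneq\Sigma_p$, not $\Sigma_p\supsetneq\Sigma_{p+1}$. In fact \zcref{cor:reduction-to-single-set} shows that any jump $\Sigma_p\supsetneq\Sigma_{p+1}$ must already occur inside a full irreducible $\Bc(T,k)$ with $\abs{T}=k+p+1$.

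\emph{Second bullet.} Here you give no construction. What has to be produced is a sign vector on $\Bc(k+3,k)$ that is consistent at every $D_K$ with $\abs{K}=k+2$ but is not a chamber. Nothing in your sketch produces it or explains why $k\ge 3$ allows it.

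\emph{The missing idea.} It is \zcref{prop:generic-chamber}: if a rank-$\ell$ arrangement has a simple chamber, flipping the signs of its $\ell$ walls gives an element of $\Sigma_{\ell-1}\setminus\Sigma_\ell$. Your Hattori-type local picture, $\ell$ generic hyperplanes through a codimension-$(\ell-1)$ flat, does occur, but only for non-very generic $\Ac^0$.
\zcref{lem:non-very-3} uses $I_j=[n]\setminus\{2j-1,2j\}$ for $j=1,2,3$. This needs $n\ge 6$, i.e.\ $k\ge 3$ when $n=k+3$, which is where $k\ge3$ enters. \zcref{lem:non-very-2} handles $r\ge 4$.
Perturbing $\Ac^0$ into the very generic locus splits that flat and leaves a simple chamber (\zcref{cor:perturbation-method}). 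This gives the jump in $\Bc(k+r,k)$ itself, which your extension-by-nearby-chamber step (\zcref{lem:extend}) then lifts to $\Bc(n,k)$.

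\emph{First bullet.} Your reduction to $\Bc(5,2)$ matches the paper's, but the $\Bc(5,2)$ step takes a different route and its justification is insufficient. The inversion-set characterization of $B(n,2)$ identifies $\Sigma_2(\Bc(n,2))$ with the packet-consistent sets. It does not say these sets are chambers, and that realizability claim fails in general: by the theorem itself, some very generic $\Bc(6,2)$ has $\Sigma_2=\Sigma_3\supsetneq\Sigma_4$.

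At $n=5$ your claim is true and can be closed by counting. The lattice $P(5,2)$ gives $\chi(t)=t^3-10t^2+30t-21$, hence $62$ chambers, equal to $\abs{B(5,2)}=62$. You also need to check that, for $\alpha_i=(1,s_i)$ with $s_1<\cdots<s_5$, consistency at each $D_K$ is exactly the lexicographic prefix/suffix condition relative to a fixed chamber. Then $\Sigma_3\subseteq\Sigma_2$ together with equal cardinalities gives equality. This is shorter than the paper's Gordan-certificate case analysis, at the cost of importing $\abs{B(5,2)}=62$.
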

Moreover,
\begin{cor*}[\zcref{cor:non-K(pi1)}]
  Suppose that $n - k \ge 3$.
  If $k \ge 3$, then $\pi_p(M(\Bc(n,k)))$ $\ne 0$ for $2 \le p \le n - k - 1$.
  If $k = 2$, then $\pi_p(M(\Bc(n,2))) \ne 0$ for $3 \le p \le n - 3$.
  In particular, $\Bc(n, k)$ is not $K(\pi,1)$ for $k\neq 1$ and $(n,k) \neq (5,2)$.
\end{cor*}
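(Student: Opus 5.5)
The corollary will follow from \zcref{thm:sigma-jump} together with Yoshinaga's criterion (\zcref{thm:Yoshinaga}). I take that criterion in the form: for the complexification of a real arrangement $\Ac$, a proper inclusion $\Sigma_p(\Ac) \supsetneq \Sigma_{p+1}(\Ac)$ forces $\pi_p(M(\Ac)) \ne 0$. If instead the index shift is such that the jump at $p$ detects $\pi_{p}$ through the codimension-$(p+1)$ localization, I will reindex accordingly. In the statement, the jump $\Sigma_2 \supsetneq \Sigma_3$ yields $\pi_2$, which is consistent with this reading.

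\textbf{Step 1: reduction to a real model.} The homotopy type of $M(\Bc(n,k))$ should depend only on the intersection lattice. Even if it does not, the complement is determined up to homotopy within a combinatorial stratum of very generic arrangements. I will therefore pick a real very generic arrangement $\Ac$, so that $\Bc(n,k)$ is the complexification of a real arrangement and \zcref{thm:Yoshinaga} applies. The phrase ``there exists a Manin--Schechtman arrangement'' in \zcref{thm:sigma-jump} refers to such a real choice. To pass to an arbitrary very generic $\Bc(n,k)$, I will use that all very generic ones share the intersection lattice $P(n,k)$. I will also use that the very generic locus is connected, being the complement of a proper algebraic subset of a complex parameter space. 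The complements then form a locally trivial family over this locus, by a Randell-type lattice-isotopy argument, so the homotopy groups are independent of the choice. This step is the main obstacle, because connectivity and isotopy must be justified over $\Cb$ while the sign-vector argument is carried out on one real member.

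\textbf{Step 2: case $k \ge 3$.} The second item of \zcref{thm:sigma-jump} gives $\pi_2 \ne 0$. The third item applies with $k \ge 2$ and $n-k \ge 3$, and gives $\pi_p \ne 0$ for $3 \le p \le n-k-1$. Together these cover $2 \le p \le n-k-1$.

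\textbf{Step 3: case $k=2$, and the final assertion.} For $k=2$ the third item gives $\pi_p \ne 0$ for $3 \le p \le n-3$. This range is nonempty exactly when $n \ge 6$, so $\Bc(n,2)$ is not $K(\pi,1)$ for $n\ge 6$. For $k\ge 3$ the range always contains $p=2$ once $n-k\ge 3$. For the ``in particular'' statement, the small cases $n-k\le 2$ must be handled separately. Here I would appeal to the known results cited in the introduction. Rank three Manin--Schechtman arrangements, i.e. those with $n-k=2$ \cite{FR87,FR00}, are known not to be $K(\pi,1)$ except in degenerate cases. The case $n-k=1$ is a single generic hyperplane configuration, or Boolean, which is trivially $K(\pi,1)$. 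So the stated conclusion should be read under the standing hypothesis $n-k\ge 3$, with the excluded case $(n,k)=(5,2)$ being precisely where the range $3\le p\le n-3$ is empty and the first item of \zcref{thm:sigma-jump} shows that $\Sigma$ does not jump at $p=2$.
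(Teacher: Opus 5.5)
Your argument is correct and is the paper's own: the second and third items of \zcref{thm:sigma-jump} combined with \zcref{thm:Yoshinaga}, with the essentialization handled by \zcref{lem:sigma-essentialization} and $M(\Bc(n,k))\cong M(\Bc(n,k)^{\mathrm{ess}})\times\Cb^{k}$, since Yoshinaga's theorem is stated for essential arrangements. Your Step~1 is not actually an obstacle: it is exactly \zcref{prop:same-homotopy-type}, which applies Randell's lattice-isotopy theorem on the connected Zariski-open set $\Oc(n,k)$, and that set contains real points.

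One correction to the aside in your Step~3. $\Bc(n,k)$ has rank $n-k$, so $n-k=2$ gives rank-two arrangements, and these are always $K(\pi,1)$; the rank-three cases treated in \cite{FR87,FR00} are those with $n-k=3$. Consequently the ``in particular'' clause genuinely needs the standing hypothesis $n-k\ge 3$, because it fails for $n-k\le 2$, rather than being extendable by appeal to known results.
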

The exceptional case $(n,k) = (5,2)$ illustrates a limitation of the criterion.
Since $\Bc(5,2)$ has rank three, the $\Sigma$-filtration terminates at $\Sigma_3$, and we prove that $\Sigma_2(\Bc(5,2)) = \Sigma_3(\Bc(5,2))$.
Hence, no strict jump occurs, and the non-$K(\pi,1)$ property cannot be detected in this case.

In the non-very generic case, the situation is more subtle.
Some Manin--Schechtman arrangements are known to be $K(\pi,1)$ (see \zcref{ex:H3}), indicating that the topology depends strongly on the geometry of the underlying arrangement.

This paper is organized as follows.
In \zcref{sec:consistency}, we review the $\Sigma$-filtration and Yoshinaga's criterion for non-vanishing of homotopy groups.
In \zcref{sec:MSnk-BBA-poset}, we recall the definition and combinatorial structure of Manin--Schechtman arrangements.
\zcref{sec:main_result} presents an outline of the proof of the main theorem stated above.
In \zcref{sec:pfoflem41}, we treat the exceptional case $\Bc(5,2)$ and show its cleanliness, i.e., $\Sigma_2(\Bc(5,2)) = \Sigma_3(\Bc(5,2))$.
Finally, in \zcref{sec:non-very}, we study certain subvarieties in the parameter space of generic arrangements in order to prove the remaining cases $\Bc(n,k)$ for $k \ge 2$, $n - k \ge 3$, and $(n,k) \neq (5,2)$.
We also examine the relationship with matroid strata.

\subsection*{Notation}
The terminology for hyperplane arrangements follows \cite{OT92}.
The essentialization of an arrangement $\Ac$ is denoted by $\Ac^{\mathrm{ess}}$.
The intersection lattice of $\Ac$, denoted by $L(\Ac)$, is the graded lattice of intersections of hyperplanes in $\Ac$, ordered by reverse inclusion.
We denote by $L_p(\Ac)$ the set of rank $p$ elements of $L(\Ac)$.
For $X \in L(\Ac)$, we denote by $\Ac_X$ the localization of $\Ac$ at $X$.

For a positive integer $n$, we write $[n] \coloneq \{ 1, \dots, n \}$.
We denote by $2^{[n]}$ the power set of $[n]$, and by $\binom{[n]}{k} \coloneq \{ I \in 2^{[n]} \mid \abs{I} = k \}$ the set of $k$-element subsets of $[n]$.
For brevity, when no confusion is likely to arise, we sometimes denote the set $\{ a, b, \dots, c \}$ simply as $ab \dots c$.
\section*{Acknowledgments}
The authors would like to thank Masahiko Yoshinaga for pointing out the relevance of the $\Zb^3$-test discussed in \zcref{rem:Z^3}.
The second author was supported by JSPS KAKENHI Grant Numbers JP24K16926 and JPJSBP120256504.
\section{Consistency and $\Sigma$-filtration}\label{sec:consistency}
Let $\Ac$ be a central arrangement in $V = \Rb^\ell$, and for each $H \in \Ac$ fix a linear form $\alpha_H \in V^*$ such that $H = \ker(\alpha_H)$.
Let $\varepsilon \colon \Ac \to \{ \pm \}$ be a sign vector and set $\varepsilon_H = \varepsilon(H)$ for $H \in \Ac$. Define the associated system of half-spaces by
\begin{equation*}
  \Hc(\varepsilon) = (H^{\varepsilon_H} \mid H \in \Ac), \text{ where }H^{\pm} \coloneq \{ x \in V \mid \pm \alpha_H(x) > 0 \}.
\end{equation*}
For a subarrangement $\Ac' \subset \Ac$, we denote by $\varepsilon \vert_{\Ac'}$ the restriction of $\varepsilon$ to $\Ac'$.

\begin{defi}\label{defi:consistency}
  Let $X \in L(\Ac)$.
  We say that the system $\Hc(\varepsilon)$ is \emph{consistent at $X$} if
  \begin{equation*}
    \bigcap_{H \in {\Ac_X}} H^{\varepsilon_H}
    \neq \emptyset.
  \end{equation*}
\end{defi}
For $1 \le p \le \ell$, let $\Sigma_p(\Ac)$ denote the set of sign vectors $\varepsilon$ such that $\Hc(\varepsilon)$ is consistent at every $X \in L(\Ac)$ with $\codim(X) \le p$.
\zcref{fig:consistency} illustrates examples of consistent and inconsistent systems, where the shaded region indicates the intersection of the chosen half-spaces.
\begin{figure}[htbp]
  \centering
  \begin{tikzpicture}[scale=0.7]
    \coordinate (0) at (30:2);
    \coordinate (1) at (90:2);
    \coordinate (2) at (150:2);
    \coordinate (3) at (210:2);
    \coordinate (4) at (270:2);
    \coordinate (5) at (330:2);
    \draw (0) -- (3);
    \draw (1) -- (4);
    \draw (2) -- (5);
    \fill[blue!20] (0,0) -- (90:2) -- (150:2) -- cycle;
    \draw[->] (30:1.5) -- (60:1.5);
    \draw[->] (90:1.5) -- (120:1.5);
    \draw[->] (330:1.5) -- (0:1.5);
  \end{tikzpicture}
  \hspace{0.1\columnwidth}
  \begin{tikzpicture}[scale=0.7]
    \coordinate (0) at (30:2);
    \coordinate (1) at (90:2);
    \coordinate (2) at (150:2);
    \coordinate (3) at (210:2);
    \coordinate (4) at (270:2);
    \coordinate (5) at (330:2);
    \draw (0) -- (3);
    \draw (1) -- (4);
    \draw (2) -- (5);
    \draw[->] (30:1.2) -- (0:1.2);
    \draw[->] (90:1.5) -- (120:1.5);
    \draw[->] (330:1.5) -- (0:1.5);
  \end{tikzpicture}
  \caption{Consistent (left) and inconsistent (right) systems of half-spaces for the braid arrangement $\mathrm{Br}(3)$.}
  \label{fig:consistency}
\end{figure}

\begin{equation*}
  \Sigma_1(\Ac) \supseteq \cdots \supseteq \Sigma_\ell(\Ac).
\end{equation*}
Yoshinaga \cite{Yos24a} showed that a proper inclusion in this filtration detects the non-vanishing of higher homotopy groups of the complement of arrangements.

\begin{thm}[{\cite[Theorem~5.1]{Yos24a}}]\label{thm:Yoshinaga}
  Let $\Ac$ be a central and essential arrangement in $V = \Rb^\ell$.
  \begin{itemize}
    \item If $\Sigma_p(\Ac) \supsetneq \Sigma_{p+1}(\Ac)$ for some $p \ge 2$, then $\pi_p(M(\Ac \otimes \Cb)) \neq 0$.
    \item If $M(\Ac \otimes \Cb)$ is $K(\pi, 1)$, then $\Sigma_2(\Ac) = \cdots = \Sigma_\ell(\Ac)$.
  \end{itemize}
  Here $\Ac \otimes \Cb \coloneq \{ H \otimes \Cb \subset \Cb^\ell \mid H \in \Ac \}$ denotes the complexification of $\Ac$.
\end{thm}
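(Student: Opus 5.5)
The statement is Yoshinaga's theorem, which the paper cites without proof. I would follow Yoshinaga's strategy, which works through the Salvetti complex. Let $\varepsilon\in\Sigma_p(\Ac)\setminus\Sigma_{p+1}(\Ac)$. I want to turn $\varepsilon$ into a nontrivial $p$-sphere in $M(\Ac\otimes\Cb)$.

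\textbf{Step 1: a map from the sphere.} Consistency of $\Hc(\varepsilon)$ at every $X$ with $\codim X\le p$ lets me choose, for each such flat, a chamber $C_X$ of the localization $\Ac_X$ lying in $\bigcap_{H\in\Ac_X}H^{\varepsilon_H}$. Using these chambers I build a map from the $(p-1)$-skeleton of the dual zonotopal (Salvetti-type) structure into $M(\Ac\otimes\Cb)$. The explicit form is
\begin{equation*}
  x\mapsto x+\sqrt{-1}\,v(x),
\end{equation*}
where the imaginary part $v(x)$ is taken in the cone of $\varepsilon$-consistent directions, defined locally near each face.

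Concretely, I take a small sphere $S^{\ell-1}$ in $V$ and its stratification by the arrangement. On the part of $S^{\ell-1}$ near flats of codimension $\le p$, I choose $v$ continuously, with $\alpha_H(v(x))$ of sign $\varepsilon_H$ whenever $x\in H$. This guarantees that $x+\sqrt{-1}\,v(x)$ avoids every complexified hyperplane.

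\textbf{Step 2: the obstruction.} Inconsistency at some $X$ of codimension $p+1$ means this choice cannot be extended over the corresponding region. Restricting to a transversal slice of $X$ reduces to the essential arrangement $\Ac_X$ in $\Rb^{p+1}$. On its unit sphere $S^p$, the construction gives a map $f\colon S^p\to M(\Ac_X\otimes\Cb)$: on the lower-codimension strata I use the consistent choices, and near the origin the configuration is inconsistent. Composing with a retraction, or using that $M(\Ac)\to M(\Ac_X)$ admits a section up to homotopy, I get a class in $\pi_p(M(\Ac\otimes\Cb))$ as well.

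\textbf{Step 3: nontriviality.} I show $f$ is essential by pairing it with a cohomology class of the universal cover, or equivalently by working in the Salvetti complex. The sign vector $\varepsilon$ determines a vertex $\varepsilon\in\{\pm\}^{\Ac}$ of the cube, and $M$ maps to the complement of the coordinate subspace arrangement through
\begin{equation*}
  z\mapsto (\mathrm{Im}\,\alpha_H(z))_H .
\end{equation*}
An inconsistent sign pattern should correspond to a sphere that links a nonempty region, detected by the degree of a Gauss-type map into $\Rb^{p+1}\setminus\{0\}$. I would first try to show that $f$ lifts to the universal cover and that, in the cellular chain complex of that cover, its Hurewicz image is a nonzero $p$-cycle. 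Such a cycle is supported on cells indexed by the chambers chosen in Step 1, and it cannot bound because consistency at $X$ fails. This Hurewicz step is the \textbf{main obstacle}, since it requires understanding the $p$-cells of the Salvetti complex of the universal cover.

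\textbf{Step 4: the second bullet.} This follows immediately from the first: if $M(\Ac\otimes\Cb)$ is $K(\pi,1)$, then $\pi_p=0$ for all $p\ge2$, so no proper inclusion $\Sigma_p(\Ac)\supsetneq\Sigma_{p+1}(\Ac)$ can occur with $p\ge2$.
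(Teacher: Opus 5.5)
The paper does not prove this theorem; it imports it from Yoshinaga, so your proposal has to stand on its own, and it has a genuine gap at Step 3.

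\textbf{What is fine.} Steps 1, 2 and 4 work. The sign vector $\varepsilon$ does give a map $S^p\to M(\Ac_X\otimes\Cb)$. Combinatorially, this is the subcomplex of the Salvetti complex of $\Ac_X^{\mathrm{ess}}$ formed by the cells $[F,F\circ\varepsilon]$ with $F\neq 0$, which is the boundary of a zonotope. Localization is a homotopy retract, so $\pi_p(M(\Ac_X\otimes\Cb))$ injects into $\pi_p(M(\Ac\otimes\Cb))$. The second bullet is an immediate consequence of the first.

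\textbf{The gap.} The entire content of the theorem is that this sphere is essential, and Step 3 does not prove it. It lists strategies (``should correspond'', ``I would first try'') and then asserts that the lifted cycle ``cannot bound because consistency at $X$ fails''. That assertion is exactly what has to be shown.

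Concretely, there are three problems.
(i) The sphere is always null-homologous in $M$ itself. $H^*(M;\Zb)$ is torsion free and generated in degree one (Orlik--Solomon), and products of degree-one classes pull back to zero on $S^p$ for $p\ge 2$. So any detection must happen in a covering space. Lifting to the universal cover is automatic for $p\ge 2$ and is not the issue.
(ii) In a cover, the $(p+1)$-cells are lifts of the top cells $[0,C]$, with $C$ a chamber. The boundary of each is a sphere of exactly the same shape as yours, with $\varepsilon$ replaced by the tope $C$. Inconsistency at $X$ only says your sphere is not the boundary of a single such cell. A null-homology is an arbitrary finite integer combination of lifts of such cells on different sheets, and nothing in your argument excludes that.
(iii) The detection maps you suggest do not fill this gap. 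The map $z\mapsto(\mathrm{Im}\,\alpha_H(z))_H$ does not land in the complement of any coordinate subspace arrangement, since every real point of $M$ is sent to $0$. The ``Gauss-type map'' to $\Rb^{p+1}\setminus\{0\}$ is never defined on a whole covering space. A degree argument needs that, because a null-homotopy can leave any neighborhood of your sphere.

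What is missing is a global invariant: either a cellular cocycle on some cover of $M$ that vanishes on the boundary of every lifted $(p+1)$-cell but not on your lifted sphere, or a map from a cover of $M$ to $S^p$ with nonzero degree on it. Supplying such an invariant is precisely the substance of the cited result, and your proposal does not provide it.
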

An arrangement $\Ac$ is called \emph{clean} in \cite{DDBP25} if $\Sigma_2(\Ac) = \Sigma_\ell(\Ac)$, which is a necessary condition for the $K(\pi,1)$ property.

The following lemmas follow immediately from the definition of consistency.
\begin{lem}\label{lem:sigma-essentialization}
  Let $\Ac$ be a central arrangement.
  Then, for every $p$ we have
  \begin{equation*}
    \Sigma_p(\Ac^{\mathrm{ess}}) = \Sigma_p(\Ac).
  \end{equation*}
\end{lem}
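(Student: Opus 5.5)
The plan is to compare the two arrangements directly through the quotient map. Write $\Ac$ as a central arrangement in $V=\Rb^\ell$ and set $W=\bigcap_{H\in\Ac}H$, the center. Recall that $\Ac^{\mathrm{ess}}$ is the arrangement $\{H/W \mid H\in\Ac\}$ in $V/W$. Let $\pi\colon V\to V/W$ be the projection. Each $\alpha_H$ vanishes on $W\subseteq H$, so it factors as $\alpha_H=\bar\alpha_H\circ\pi$ with $\bar\alpha_H\in(V/W)^*$ and $H/W=\ker\bar\alpha_H$. Using these forms, the assignment $H\mapsto H/W$ is a bijection $\Ac\to\Ac^{\mathrm{ess}}$. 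Distinct hyperplanes contain $W$ and have distinct images, so it is injective. We use it to identify sign vectors on the two arrangements, and then show $\Sigma_p(\Ac^{\mathrm{ess}})=\Sigma_p(\Ac)$ under this identification.

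First, the lattice correspondence. The map $X\mapsto X/W=\pi(X)$ is an isomorphism $L(\Ac)\to L(\Ac^{\mathrm{ess}})$. Indeed, intersections commute with $\pi$ on subspaces containing $W$, because $\pi^{-1}(\pi(X))=X$ for such $X$. This isomorphism preserves codimension, since $\codim_V X=\codim_{V/W}(X/W)$. It also matches localizations: $H\supseteq X$ if and only if $H/W\supseteq X/W$, so $(\Ac_X)$ corresponds to $(\Ac^{\mathrm{ess}})_{X/W}$.

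Second, consistency. The half-spaces satisfy $H^{\pm}=\pi^{-1}\bigl((H/W)^{\pm}\bigr)$, because $\alpha_H=\bar\alpha_H\circ\pi$. Hence
\begin{equation*}
\bigcap_{H\in\Ac_X}H^{\varepsilon_H}=\pi^{-1}\Bigl(\bigcap_{H\in\Ac_X}(H/W)^{\varepsilon_H}\Bigr).
\end{equation*}
Since $\pi$ is surjective, the left side is nonempty exactly when the right-hand intersection is. So $\Hc(\varepsilon)$ is consistent at $X$ if and only if the corresponding system for $\Ac^{\mathrm{ess}}$ is consistent at $X/W$. Quantifying over all $X$ with $\codim X\le p$, which the lattice isomorphism matches bijectively with the corresponding flats of $\Ac^{\mathrm{ess}}$, gives the equality $\Sigma_p(\Ac^{\mathrm{ess}})=\Sigma_p(\Ac)$.

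The only subtle point is the range of $p$. $\Sigma_p$ was defined for $1\le p\le\ell$, and the two ambient dimensions differ. For $p$ above $\rk\Ac$, the condition "$\codim X\le p$" ranges over all of $L(\Ac)$ in both cases, so both sides stabilize at the same set. Interpreting $\Sigma_p$ this way for all $p$ makes the identity hold for every $p$. Apart from this convention check, the argument is routine.
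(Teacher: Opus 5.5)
Your proposal is correct and matches the paper's approach. The paper gives no separate proof beyond noting that the lemma follows immediately from the definition of consistency. Your argument writes out that verification: pull back the forms along $V \to V/W$, check that the lattice isomorphism preserves codimension and localizations, and note that $H^{\pm}=\pi^{-1}\bigl((H/W)^{\pm}\bigr)$, so nonemptiness of the intersections is preserved.
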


\begin{lem}\label{lem:sigma-product-decomposition}
  Let $\Ac_1$ and $\Ac_2$ be central arrangements of ranks $\ell_1$ and $\ell_2$, respectively.
  Then, for every $p$ we have
  \begin{equation*}
    \Sigma_p(\Ac_1 \oplus \Ac_2) = \Sigma_{\min(p,\ell_1)}(\Ac_1) \times \Sigma_{\min(p,\ell_2)}(\Ac_2).
  \end{equation*}
\end{lem}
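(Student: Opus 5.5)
The plan is to compare the flats of codimension at most $p$ of $\Ac_1 \oplus \Ac_2$ with pairs of flats of $\Ac_1$ and $\Ac_2$, and to check that consistency splits along this correspondence. Write $\Ac = \Ac_1 \oplus \Ac_2$ in $V = V_1 \oplus V_2$. Each $H \in \Ac_1$ appears in $\Ac$ as $H \oplus V_2$, with linear form $\alpha_H \circ \mathrm{pr}_1$, and symmetrically for $\Ac_2$. Hence a sign vector $\varepsilon$ on $\Ac$ is the same thing as a pair $(\varepsilon_1, \varepsilon_2)$ of sign vectors on $\Ac_1$ and $\Ac_2$, and the half-space $(H \oplus V_2)^{\pm}$ equals $H^{\pm} \times V_2$.

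\textbf{Flats.} Every $X \in L(\Ac)$ has the form $X_1 \oplus X_2$ with $X_i \in L(\Ac_i)$, where $V_i$ itself plays the role of the bottom element. Codimension is additive: $\codim X = \codim X_1 + \codim X_2$. The localization splits as $\Ac_X = (\Ac_1)_{X_1} \oplus (\Ac_2)_{X_2}$. Consequently
\begin{equation*}
  \bigcap_{H \in \Ac_X} H^{\varepsilon_H} = \Bigl(\bigcap_{H \in (\Ac_1)_{X_1}} H^{\varepsilon_{1,H}}\Bigr) \times \Bigl(\bigcap_{H \in (\Ac_2)_{X_2}} H^{\varepsilon_{2,H}}\Bigr),
\end{equation*}
with the convention that an empty intersection is the whole space $V_i$. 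This product is nonempty exactly when both factors are. So $\Hc(\varepsilon)$ is consistent at $X$ if and only if $\Hc(\varepsilon_1)$ is consistent at $X_1$ and $\Hc(\varepsilon_2)$ is consistent at $X_2$.

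\textbf{Inclusion $\subseteq$.} Suppose $\varepsilon \in \Sigma_p(\Ac)$, and let $X_1 \in L(\Ac_1)$ with $\codim X_1 \le \min(p, \ell_1)$. Then $X_1 \oplus V_2$ is a flat of $\Ac$ of the same codimension, which is at most $p$. By the splitting above, $\varepsilon$ is consistent there, so $\varepsilon_1$ is consistent at $X_1$. This gives $\varepsilon_1 \in \Sigma_{\min(p,\ell_1)}(\Ac_1)$, and the same argument gives $\varepsilon_2 \in \Sigma_{\min(p,\ell_2)}(\Ac_2)$.

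\textbf{Inclusion $\supseteq$.} Suppose $\varepsilon_i \in \Sigma_{\min(p,\ell_i)}(\Ac_i)$ for $i = 1, 2$, and let $X = X_1 \oplus X_2$ with $\codim X \le p$. Then $\codim X_i \le p$, and also $\codim X_i \le \ell_i$, since $\ell_i$ is the rank of $\Ac_i$, so $\codim X_i \le \min(p, \ell_i)$. Hence each $\varepsilon_i$ is consistent at $X_i$, and by the splitting $\varepsilon$ is consistent at $X$.

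The argument is bookkeeping and has no real obstacle. The points needing care are the degenerate ones. The factor $X_i = V_i$ corresponds to the empty localization, whose intersection is the whole space and is trivially consistent. The $\min$ is needed because $\Sigma_q$ is only defined for $q$ up to the rank; consistency at every flat is the condition for $q = \ell_i$. If $\Ac_i$ is not essential, so that codimension is measured in the ambient space, one first passes to $\Ac_i^{\mathrm{ess}}$ by \zcref{lem:sigma-essentialization}.
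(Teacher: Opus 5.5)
Your proposal is correct and takes the same route as the paper: write each flat as $X = X_1 \oplus X_2$ with $\codim X = \codim X_1 + \codim X_2$ and $\Ac_X = (\Ac_1)_{X_1} \oplus (\Ac_2)_{X_2}$, so that consistency at $X$ is equivalent to consistency at both $X_1$ and $X_2$. The only difference is that you spell out the two inclusions explicitly, using the flats $X_1 \oplus V_2$, $V_1 \oplus X_2$ and the bound $\codim X_i \le \min(p,\ell_i)$, which the paper compresses into one sentence.
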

\begin{proof}
  Write $\Ac = \Ac_1 \oplus \Ac_2$ as an arrangement in $V_1 \oplus V_2$.
  A sign vector $\varepsilon$ on $\Ac$ restricts to sign vectors $\varepsilon\vert_{\Ac_1} \in \{ \pm \}^{\Ac_1}$ and $\varepsilon\vert_{\Ac_2} \in \{ \pm \}^{\Ac_2}$.
  For $X = X_1 \oplus X_2$ with $X_i \in L(\Ac_i)$, we have $\Ac_X = (\Ac_1)_{X_1} \oplus (\Ac_2)_{X_2}$ and $\codim(X) = \codim(X_1) + \codim(X_2)$.
  Hence consistency at $X$ is equivalent to consistency at $X_1$ and $X_2$, thus $\varepsilon \in \Sigma_p(\Ac)$ if and only if $\varepsilon\vert_{\Ac_i} \in \Sigma_{\min(p,\ell_i)}(\Ac_i)$ for $i = 1, 2$.
\end{proof}

\begin{lem}\label{lem:extend}
  Let $\Ac$ be a central arrangement and let $X\in L(\Ac)$.
  For every $p$, if $\Sigma_p(\Ac)=\Sigma_{p+1}(\Ac)$, then $\Sigma_p(\Ac_X)=\Sigma_{p+1}(\Ac_X)$ holds.
\end{lem}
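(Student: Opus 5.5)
We argue by contraposition: assuming $\Sigma_p(\Ac_X)\supsetneq\Sigma_{p+1}(\Ac_X)$, we produce a sign vector of $\Ac$ lying in $\Sigma_p(\Ac)\setminus\Sigma_{p+1}(\Ac)$. Take $\delta\in\Sigma_p(\Ac_X)\setminus\Sigma_{p+1}(\Ac_X)$. The idea is to extend $\delta$ to all of $\Ac$ by a choice of signs that is consistent in a strong sense away from $X$: fix a point $x_0\in X$ that is generic in $X$, i.e.\ $x_0\notin H$ for every $H\in\Ac\setminus\Ac_X$. Define
\begin{equation*}
  \varepsilon_H=\delta_H \ (H\in\Ac_X),\qquad \varepsilon_H=\sgn\alpha_H(x_0)\ (H\notin\Ac_X).
\end{equation*}

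The key step is to check that for any $Y\in L(\Ac)$, consistency of $\Hc(\varepsilon)$ at $Y$ reduces to consistency of $\delta$ at a flat of $\Ac_X$ of no larger codimension. Let $Z=Y\vee X$ in the sense of the smallest flat containing both, namely $Z=\bigcap_{H\in\Ac_X\cap\Ac_Y}H\in L(\Ac_X)$. Then $\codim Z\le\codim Y$, and the localization $(\Ac_X)_Z$ equals $\Ac_X\cap\Ac_Y$: any $H\in\Ac_X$ containing $Z$ contains $Y$, since $Y\subseteq Z$. If $\delta$ is consistent at $Z$, pick $z$ in $\bigcap_{H\in\Ac_X\cap\Ac_Y}H^{\delta_H}$. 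Consider the point $y=x_0+t z$ for small $t>0$; shifting by an element of $Y$ is harmless for the hyperplanes in $\Ac_Y$, but it is cleaner to argue as follows. For $H\in\Ac_Y\setminus\Ac_X$ we have $\alpha_H(x_0)$ of sign $\varepsilon_H$, and for $H\in\Ac_X\cap\Ac_Y$ we have $\alpha_H(x_0+tz)=t\alpha_H(z)$ of sign $\delta_H$. Hence for small $t>0$, the point $x_0+tz$ lies in all $H^{\varepsilon_H}$ with $H\in\Ac_Y$. Thus $\varepsilon$ is consistent at $Y$ whenever $\delta$ is consistent at $Z$. Conversely, for $Y\in L(\Ac_X)\subseteq L(\Ac)$ we have $\Ac_Y\subseteq\Ac_X$, so consistency of $\varepsilon$ at $Y$ is literally consistency of $\delta$ at $Y$.

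Combining the two directions: since $\delta\in\Sigma_p(\Ac_X)$ and $\codim Z\le\codim Y\le p$ (codimension of $Z$ computed in $\Ac_X$ agrees with that in $V$, or after essentialization via \zcref{lem:sigma-essentialization}), we get $\varepsilon\in\Sigma_p(\Ac)$. Since $\delta$ is inconsistent at some $W\in L(\Ac_X)$ with $\codim W\le p+1$, and $W\in L(\Ac)$ with $\Ac_W=(\Ac_X)_W$, we get $\varepsilon\notin\Sigma_{p+1}(\Ac)$. This contradicts $\Sigma_p(\Ac)=\Sigma_{p+1}(\Ac)$.

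The main point needing care is the perturbation step, namely ensuring that a single point simultaneously realizes the signs $\delta$ near $x_0$ and the fixed signs of $x_0$ for hyperplanes not through $X$. This works because the hyperplanes in $\Ac_X$ all vanish at $x_0$, so their signs are governed by the direction $z$, while the remaining hyperplanes are strictly signed at $x_0$ and stay so under small perturbation.
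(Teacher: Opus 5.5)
Your proposal is correct and is essentially the paper's argument, run as a contrapositive. You extend a sign vector on $\Ac_X$ by the signs $\sgn\alpha_H(x_0)$ at a point $x_0\in X$ off all hyperplanes of $\Ac\setminus\Ac_X$, and your perturbation $x_0+tz$ supplies the check that $\varepsilon\in\Sigma_p(\Ac)$, which the paper leaves as ``easy to check''. One notational slip: since $L(\Ac)$ is ordered by reverse inclusion, your $Z=\bigcap_{H\in\Ac_X\cap\Ac_Y}H$ is the meet of $X$ and $Y$, not a join, but the explicit formula you actually use is the right one.
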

\begin{proof}
  Under the assumption $\Sigma_p(\Ac)=\Sigma_{p+1}(\Ac)$, we show that if $\varepsilon'\in \Sigma_p(\Ac_X)$ then $\varepsilon'\in \Sigma_{p+1}(\Ac_X)$.
  Choose $x \in X \setminus \bigcup_{H \in \Ac \setminus \Ac_X} H$ and we define a sign vector $\varepsilon$ on $\Ac$ by $\varepsilon_H =\varepsilon'_H$ if $H \in \Ac_X$ and $\varepsilon_H =\operatorname{sgn} \alpha_H(x)$ if $H \notin \Ac_X$.
  It is easy to check that $\varepsilon \in \Sigma_p(\Ac)$.
  Thus, we have $\varepsilon \in \Sigma_{p+1}(\Ac)$ and this implies $\varepsilon'=\varepsilon|_{\Ac_X}\in \Sigma_{p+1}(\Ac_X)$.
\end{proof}
\begin{cor}
  If $\Ac$ is clean, then so is every localization $\Ac_X$ for $X\in L(\Ac)$.
\end{cor}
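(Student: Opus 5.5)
The plan is to deduce the corollary directly from \zcref{lem:extend}. Let $\Ac$ be clean, meaning $\Sigma_2(\Ac)=\Sigma_\ell(\Ac)$ with $\ell$ the rank of $\Ac$. Since the filtration is decreasing, this equality forces $\Sigma_p(\Ac)=\Sigma_{p+1}(\Ac)$ for every $2\le p\le \ell-1$. Fix $X\in L(\Ac)$ and let $r=\codim(X)$ be the rank of $\Ac_X$. For each $2\le p\le r-1$ we have $p\le \ell-1$, so \zcref{lem:extend} gives $\Sigma_p(\Ac_X)=\Sigma_{p+1}(\Ac_X)$. Chaining these equalities yields $\Sigma_2(\Ac_X)=\Sigma_r(\Ac_X)$.

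Two bookkeeping points need care. First, $\Ac_X$ lives in $V$ and is not essential. Cleanliness should be read for its essentialization, of rank $r$, and \zcref{lem:sigma-essentialization} says the $\Sigma$-sets agree, so nothing is lost. Second, $\Sigma_p$ was defined only for $p\le$ the ambient dimension. If one instead indexes by codimension in $V$, then for $p\ge r$ every flat of $\Ac_X$ has codimension at most $r$, so $\Sigma_p(\Ac_X)=\Sigma_r(\Ac_X)$ is constant anyway.

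When $r\le 2$ the statement is vacuous, since $\Sigma_2=\Sigma_r$ trivially. There is no real obstacle here. The only step needing attention is matching the indices, namely checking that $p\le r-1$ implies $p\le \ell-1$, so that \zcref{lem:extend} applies.
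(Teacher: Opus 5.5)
Your argument is correct and is the paper's approach: the paper states this corollary without proof, as an immediate consequence of \zcref{lem:extend}. Your proof supplies the omitted details. Cleanliness gives $\Sigma_p(\Ac)=\Sigma_{p+1}(\Ac)$ for all $2\le p\le \ell-1$; these equalities pass to $\Ac_X$ via \zcref{lem:extend} and chain to $\Sigma_2(\Ac_X)=\Sigma_r(\Ac_X)$, and your index check and essentialization remark are sound.
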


For later use in \zcref{sec:pfoflem41}, we reformulate consistency in terms of positive linear relations among the defining forms.
For $X \in L(\Ac)$,
consistency at $X$ is equivalent to the existence of $x$ satisfying
\begin{equation*}
  \varepsilon_H \alpha_H(x) > 0 \quad \text{for all } H \in \Ac_X.
\end{equation*}
Let $B_X(\varepsilon)$ be the matrix whose rows are $\varepsilon_H \alpha_H$.
Then this condition can be written as
\begin{equation*}
  B_X(\varepsilon) x > 0.
\end{equation*}
Here, all inequalities are understood entry-wise.

The non-existence of a solution to such a system is characterized by the following theorem of the alternative.
\begin{thm}[Gordan theorem \cite{BLVS99}]\label{thm:gordan}
  Let $B \in \Rb^{m \times \ell}$.
  Exactly one of the following holds:
  \begin{itemize}
    \item there exists $u \in \Rb^\ell$ with $Bu > 0$;
    \item there exists $\lambda \in \Rb^m$ with $\lambda \ge 0$, $\lambda \ne 0$, and $B^{\top} \lambda = 0$.
  \end{itemize}
\end{thm}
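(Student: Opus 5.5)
The plan is to prove the two halves separately: first that the alternatives are mutually exclusive, which is a one-line computation, and then that at least one of them holds, which is a separation argument in $\Rb^m$.

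\emph{Exclusivity.} Suppose both alternatives hold, so there are $u$ with $Bu > 0$ and $\lambda \ge 0$, $\lambda \neq 0$, with $B^{\top}\lambda = 0$. Then
\begin{equation*}
  0 = (B^{\top}\lambda)^{\top} u = \lambda^{\top}(Bu) = \sum_{i=1}^m \lambda_i (Bu)_i .
\end{equation*}
Every $(Bu)_i$ is strictly positive, every $\lambda_i$ is nonnegative, and at least one $\lambda_i$ is positive. Hence the right-hand side is strictly positive, which is a contradiction.

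\emph{At least one alternative holds.} Assume there is no $u$ with $Bu > 0$. Let $W = \{ Bu \mid u \in \Rb^\ell \}$, a linear subspace of $\Rb^m$. Let $P = \{ y \in \Rb^m \mid y > 0 \}$ be the open positive orthant, a nonempty open convex set. The assumption says exactly that $W \cap P = \emptyset$. By the separating hyperplane theorem for two disjoint convex sets, one of them open, there exist $\lambda \in \Rb^m \setminus \{0\}$ and $c \in \Rb$ such that
\begin{equation*}
  \lambda^{\top} w \le c \le \lambda^{\top} y \quad \text{for all } w \in W,\ y \in P.
\end{equation*}
We then extract the two required properties of $\lambda$.
\begin{itemize}
  \item[(i)] $B^{\top}\lambda = 0$. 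Since $W$ is a subspace, $\lambda^{\top} w$ must vanish identically on $W$; otherwise scaling $w$ would make it exceed $c$. Applying this to $w = Bu$ for all $u$ gives $(B^{\top}\lambda)^{\top} u = 0$ for all $u$.
  \item[(ii)] $c \le 0$. Taking $w = 0$ gives $0 \le c$. Letting $y \to 0$ inside $P$ gives $c \le 0$, so $c = 0$.
  \item[(iii)] $\lambda \ge 0$. Suppose some $\lambda_j < 0$. Take $y = t e_j + \delta \sum_{i \neq j} e_i$ with $t$ large and $\delta$ small. Then $\lambda^{\top} y < 0 = c$, a contradiction.
\end{itemize}
Thus $\lambda \ge 0$, $\lambda \ne 0$, and $B^{\top}\lambda = 0$, which is the second alternative.

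The only nontrivial ingredient is the separation theorem, and this is where I expect the real content to lie. If one prefers a self-contained argument, the same conclusion follows from Farkas' lemma or from Fourier--Motzkin elimination applied to the system $Bu \ge \mathbf{1}$. The system $Bu > 0$ is solvable if and only if $Bu \ge \mathbf{1}$ is, by scaling. By Farkas, $Bu \ge \mathbf{1}$ is infeasible exactly when there is $\lambda \ge 0$ with $B^{\top}\lambda = 0$ and $\lambda^{\top}\mathbf{1} > 0$, that is, with $\lambda \neq 0$.
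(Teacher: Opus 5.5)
Your proof is correct and complete. Exclusivity via the contradiction $0=(B^{\top}\lambda)^{\top}u=\lambda^{\top}(Bu)>0$ is the standard argument. For the other direction, you separate the column space of $B$ from the open positive orthant and then extract $B^{\top}\lambda=0$, $c=0$ and $\lambda\ge 0$, which is exactly right. Your step (ii) in fact proves $c=0$, which is what (iii) needs, and in (iii) any $t>0$ works once $\delta$ is small. Your alternative reduction to Farkas' lemma through $Bu\ge\mathbf{1}$ is also valid.

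There is no proof in the paper to compare against: the authors do not prove this statement, but quote it as a classical theorem of the alternative from \cite{BLVS99}. The paper only uses it in sign-vector form. Consistency at $X$ means the reoriented forms $\varepsilon_H\alpha_H$ have a common point where all are positive, and failure is witnessed by a nonzero nonnegative linear dependence among them. Either your separation argument or your Farkas route justifies that use.
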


Applying \zcref{thm:gordan} to $B_X(\varepsilon)$, the system $B_X(\varepsilon) x > 0$ admits no solution if and only if there exists a nonzero vector $\lambda \in \Rb^{\Ac_X}_{\ge 0}$ such that
\begin{equation*}
  \sum_{H \in \Ac_X} \lambda_H \varepsilon_H \alpha_H = 0.
\end{equation*}
After removing zero coefficients, we may assume that all $\lambda_H > 0$ on the support.
We call such a positive linear relation a \emph{Gordan certificate}.
Note that, using the criterion stated in \zcref{thm:Yoshinaga} together with localization to rank three elements and \zcref{thm:gordan}, we can recover an analog of \cite[Corollary~5.2]{Fal88}.

Furthermore, we provide an additional criterion for determining the proper inclusion of $\Sigma$-filtration.
\begin{defi}
  Let $\Ac$ be a central arrangement in $\Rb^\ell$.
  We call a chamber $C \subset \Rb^\ell$ a \emph{simple chamber} if the closure $\overline{C}$ of $C$ is the product of a simplicial cone and the center of $\Ac$, and every hyperplane $H \in \Ac$ satisfying $H \cap \overline{C} \neq \{ 0 \}$ is a wall of $C$.
\end{defi}
\begin{prop}\label{prop:generic-chamber}
  Let $\Ac$ be a real central arrangement of rank $\ell$
  with at least $\ell + 1$ hyperplanes.
  If $\Ac$ has a simple chamber, then $\Sigma_{\ell-1}(\Ac) \supsetneq \Sigma_\ell(\Ac)$ holds.
\end{prop}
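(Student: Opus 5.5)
By \zcref{lem:sigma-essentialization} we may replace $\Ac$ by $\Ac^{\mathrm{ess}}$, so we assume $\Ac$ is essential of rank $\ell$. A simple chamber $C$ then has closure $\overline{C}$ a simplicial cone with exactly $\ell$ walls $H_1,\dots,H_\ell$, whose linear forms are linearly independent. We will construct a sign vector $\varepsilon \in \Sigma_{\ell-1}(\Ac)\setminus\Sigma_\ell(\Ac)$.

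Orient each $\alpha_H$ so that $\alpha_H>0$ on $C$. Let $r_1,\dots,r_\ell$ be the extreme rays of $\overline{C}$, with $r_i$ chosen so that $\alpha_{H_k}(r_i)=0$ for $k\ne i$ and $\alpha_{H_i}(r_i)>0$. Define
\begin{equation*}
  \varepsilon_{H_i}=- \quad (1\le i\le \ell), \qquad \varepsilon_H=+ \quad (H \text{ not a wall of } C).
\end{equation*}
Thus the walls take the sign of $-C$, and every other hyperplane takes the sign of $C$.

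\textbf{Failure at the origin.} Consider $X=\{0\}$, which has codimension $\ell$. The intersection of the half-spaces $H_i^{-}$ is exactly $-C$, because the $\alpha_{H_i}$ form a basis of $V^*$. Since $\abs{\Ac}\ge \ell+1$, there is a hyperplane $H$ that is not a wall. By simplicity of $C$, $H\cap\overline{C}=\{0\}$, so $\alpha_H>0$ on $\overline{C}\setminus\{0\}$ and hence $\alpha_H<0$ on $-C$. Therefore $H^{+}\cap(-C)=\emptyset$, and $\Hc(\varepsilon)$ is inconsistent at $0$. This gives $\varepsilon\notin\Sigma_\ell(\Ac)$.

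\textbf{Consistency in codimension at most $\ell-1$.} Fix $X\in L(\Ac)$ with $\codim X\le \ell-1$. Let $S=\{i \mid H_i\in\Ac_X\}$. The forms $\alpha_{H_i}$ with $i\in S$ are independent and all vanish on $X$, so $\abs{S}\le\codim X\le\ell-1$.

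1. Set $p=\sum_{i\notin S} r_i$. Since $S$ is a proper subset of $[\ell]$, $p$ is a nonzero point of $\overline{C}$. It satisfies $\alpha_{H_k}(p)=0$ for every $k\in S$.
2. Because $p\in\overline{C}\setminus\{0\}$, every non-wall $H$ satisfies $\alpha_H(p)>0$.
3. Set $w=-\sum_{k\in S} r_k$. Then $\alpha_{H_k}(w)<0$ for each $k\in S$.
4. Put $x=p+\delta w$ with $\delta>0$ small. For $k\in S$ we get $\alpha_{H_k}(x)=\delta\,\alpha_{H_k}(w)<0$, which matches $\varepsilon_{H_k}=-$. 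For every non-wall $H$, continuity keeps $\alpha_H(x)>0$, which matches $\varepsilon_H=+$.

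The only hyperplanes in $\Ac_X$ are the walls indexed by $S$ and non-walls, so $x$ witnesses consistency at $X$. Hence $\varepsilon\in\Sigma_{\ell-1}(\Ac)$.

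The main obstacle is this second step: producing a single witness point for every low-codimension localization. The key observation is that $\Ac_X$ contains at most $\ell-1$ walls of $C$. This lets us start at a nonzero point on a proper face of $\overline{C}$, where the non-walls are strictly positive by simplicity of $C$, and push slightly toward $-C$ only in the directions of the walls lying in $\Ac_X$.
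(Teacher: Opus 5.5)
Your proof is correct and is essentially the paper's argument. It uses the same sign vector (reverse the signs of the walls of the simple chamber $C$ and keep the signs of $C$ on all other hyperplanes), gets inconsistency at the origin because a non-wall is positive on $\overline{C}\setminus\{0\}$, and gets consistency in codimension $\le \ell-1$ because such a localization contains at most $\ell-1$ walls. Where the paper appeals to the sign patterns realized in a small neighborhood of $C$ on the unit sphere, you write down the explicit witness $x = p + \delta w$ built from the extreme rays; you also verify the inconsistency at $0$, which the paper only asserts.
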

\begin{proof}
  Without loss of generality, suppose that $\Ac$ is essential.
  For any $x \in M(\Ac)$, we define $\varepsilon(x) \in \{ \pm \}^{\Ac}$ as $\varepsilon(x)_H \coloneq \sgn \alpha_H(x)$.
  It follows immediately that $\varepsilon \in \Sigma_\ell(\Ac)$ if and only if $\varepsilon = \varepsilon(x)$ for some $x \in M(\Ac)$.

  Let $\Wc \subset \Ac$ be the set of walls of the simple chamber $C$.
  For $x \in C$, define $\varepsilon^{C} \in \{ \pm \}^\Ac$ by setting $\varepsilon_H^C = \varepsilon(x)_H$ if $H \in \Ac \setminus \Wc$ and $\varepsilon_H^C = -\varepsilon(x)_H$ if $H \in \Wc$.
  Then $\varepsilon^C \notin \Sigma_\ell(\Ac)$ holds.

  Since $C$ is simplicial, we have $\abs{\Wc} = \ell$ and $\bigcap_{H \in \Wc} H = \{ 0 \}$.
  Thus, $\abs{\Ac_X \cap \Wc} \le \ell - 1$ holds for any $X \in L_{\ell - 1}(\Ac)$.
  In other words, for any $X \in L_{\ell - 1}(\Ac)$, there exists $H \in \Wc$ such that $\Ac_X \subset \Ac \setminus H$.
  Therefore, it is enough to show $\varepsilon^C \vert_{\Ac \setminus H} \in \Sigma_\ell(\Ac \setminus H)$ for any $H \in \Wc$.

  Let $C_\delta$ be the $\delta$-neighborhood $\{ x \in \Rb^{\ell} \mid d(x,C) \le \delta, \lVert x \rVert = 1 \}$ of $C$ on the unit sphere for sufficiently small $\delta > 0$.
  For $x \in M(\Ac) \cap C_{\delta}$ and any $H \in \Ac \setminus \Wc$, $\varepsilon(x)_H$ is independent of the choice of $x$, that is, $\varepsilon(x)\vert_{\Ac \setminus \Wc} = \varepsilon^C\vert_{\Ac \setminus \Wc}$.
  On the other hand, $\{ \varepsilon(x) \vert_\Wc \mid x \in M(\Ac) \cap C_{\delta} \} = \{ \pm \}^\Wc \setminus \{ \varepsilon^{C} \vert_\Wc \}$ holds.
  In particular, for any $H \in \Wc$, we have $\{ \varepsilon(x)\vert_{\Wc \setminus \{ H \}} \mid x \in M(\Ac) \cap C_\delta \} = \{ \pm \}^{\Wc \setminus \{ H \}}$.
  Therefore, for $H \in \Wc$, any $\varepsilon \in \{\pm\}^{\Ac}$ with $\varepsilon\vert_{\Ac \setminus \Wc} = \varepsilon^C\vert_{\Ac \setminus \Wc}$ satisfies $\varepsilon\vert_{\Ac \setminus H} \in \Sigma^{\Ac \setminus H}_\ell$.
  In particular, we have $\varepsilon^C\vert_{\Ac \setminus H} \in \Sigma_\ell({\Ac \setminus H})$ for any $H \in \Wc$.
\end{proof}

If $\Ac$ is an arrangement obtained by perturbing an intersection of multiplicity $\ell$ and rank $\ell - 1$ into $r$ distinct intersections, then $\Ac$ admits a simple chamber.
In terms of matroid theory, a simple chamber arises when the relaxation of a circuit-hyperplane (cf. \cite{Oxl11}) in the matroid associated with $\Ac$ is realized by perturbing the real hyperplane arrangement.

\begin{cor}\label{cor:perturbation-method}
  Let $\Ac'$ be an arrangement of rank $\ell$ with a rank $\ell - 1$ intersection $X$, and suppose that $\abs{\Ac'_X} = \ell$.
  If an arrangement $\Ac$ is obtained from $\Ac'$ by perturbing $X$ into $r$ distinct intersections, then $\Sigma_{\ell-1}(\Ac) \supsetneq \Sigma_\ell(\Ac)$ holds.
\end{cor}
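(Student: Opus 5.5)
The plan is to reduce the statement to \zcref{prop:generic-chamber}: I will show that the perturbed arrangement $\Ac$ has a simple chamber. I also need $\abs{\Ac} \ge \ell+1$. This holds because $\Ac'$ is essential of rank $\ell$ and contains the $\ell$ hyperplanes of $\Ac'_X$, which only span rank $\ell-1$. Hence some further hyperplane is needed to reach rank $\ell$, so $\abs{\Ac'} \ge \ell+1$. Perturbation preserves the number of hyperplanes, so $\abs{\Ac} \ge \ell+1$ as well. By \zcref{lem:sigma-essentialization} we may work with essential arrangements throughout.

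To find the simple chamber, look at the $\ell$ hyperplanes $H_1,\dots,H_\ell$ of $\Ac'_X$. Their linear forms span an $(\ell-1)$-dimensional space, so they satisfy exactly one linear relation. Since $X$ is a circuit-hyperplane-type flat, i.e.\ every $\ell-1$ of the forms are independent, all coefficients of this relation are nonzero. After the perturbation the $\ell$ corresponding hyperplanes of $\Ac$ have independent forms and meet only in the center. Near $X$, away from the other hyperplanes, they cut out a small region that was not present before: in the relevant transversal $\ell$-dimensional picture (projectively, an $(\ell-1)$-simplex created when $\ell$ hyperplanes through a common codimension-$(\ell-1)$ flat are moved apart), this region is a simplex.

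Concretely, choose a point $x_0 \in X$ that avoids all hyperplanes of $\Ac' \setminus \Ac'_X$, and choose a small ball $U$ around it that meets only the hyperplanes $H_1,\dots,H_\ell$. For a sufficiently small perturbation, only the perturbed $H_1,\dots,H_\ell$ meet $U$. In $U$ they form a generic arrangement of $\ell$ affine hyperplanes in an $(\ell-1)$-dimensional transversal, and such an arrangement has a unique bounded simplex cell. That cell lies inside a chamber $C$ of $\Ac$ whose walls are exactly the perturbed $H_1,\dots,H_\ell$. The closure $\overline{C}$ is the cone over the simplex, so it is a simplicial cone; one could take the sign of the relation coefficients to see which sign pattern yields it. Every hyperplane of $\Ac$ meeting $\overline{C}\setminus\{0\}$ must meet $U$, so it is one of the walls. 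Hence $C$ is simple, and \zcref{prop:generic-chamber} gives $\Sigma_{\ell-1}(\Ac) \supsetneq \Sigma_\ell(\Ac)$.

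The main obstacle is making the word ``perturbing into $r$ distinct intersections'' precise so that the local picture is a single bounded simplex cell. If $r<\ell$ the perturbed hyperplanes might still have partial coincidences. I would handle this by noting that because they all meet only at $0$ and every $\ell-1$ of them are independent, the cell bounded by all $\ell$ of them is still a cone over an $(\ell-1)$-simplex, with no other hyperplane entering a neighbourhood of it. This should be checked via the unique linear relation among the forms and the Gordan-type sign argument from \zcref{thm:gordan}.
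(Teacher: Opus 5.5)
Your proposal is correct and follows the same route as the paper. Both arguments exhibit a simple chamber in $\Ac$ — the Boolean deformation of $\Ac'_X$ bounds a small simplex near a generic point of $X$, and no other hyperplane comes near it — and then apply \zcref{prop:generic-chamber}; your local-slice argument simply spells out what the paper's short proof asserts. The circuit condition you invoke (every $\ell-1$ of the forms in $\Ac'_X$ independent, which is what keeps the simplex inside $U$) is not stated in the corollary but is likewise implicit in the paper's notion of perturbing $X$, and it is exactly what \zcref{lem:non-very-3} and \zcref{lem:non-very-2} provide in the application.
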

\begin{proof}
  Without loss of generality, suppose that $\Ac$ is essential.
  Since $\abs{\Ac'_X} = \ell$, if $X$ is perturbed into $r$ distinct intersections, then $\Ac'_X$ deforms into a Boolean arrangement.
  Furthermore, the localization at each intersection arising from perturbing an arbitrary intersection $Y$ is a subarrangement of $\Ac_Y$.
  Therefore, $\Ac$ has a simple chamber and hence $\Sigma_{\ell-1}(\Ac) \supsetneq \Sigma_\ell(\Ac)$ holds by \zcref{prop:generic-chamber}.
\end{proof}

\begin{ex}\label{ex:B63}
  Consider a real arrangement $\Bc$ of rank three illustrated in \zcref{fig:ex-generic-chamber}, which is spanned by six vectors in $\Rb^3$.
  As will be explained in \zcref{sec:MSnk-BBA-poset}, this is equivalent to the Manin--Schechtman arrangement $\Bc(6, 3)$.

  In the figure, the shaded chamber $C$ containing the point $x$ is simple.
  Also, if we take a sign vector $\varepsilon^C \in \{ \pm \}^\Bc$ as defined in the proof of \zcref{prop:generic-chamber}, the associated system $\Hc(\varepsilon^C)$ of half-spaces corresponds to the directions indicated by the arrows in \zcref{fig:ex-generic-chamber}.
  We can check that $\varepsilon^C$ is consistent at every intersection of rank two, but we have $\bigcap_{H \in \Bc} H^{\varepsilon^C_H} = \emptyset$.
  Therefore, $\Sigma_2(\Bc) \supsetneq \Sigma_3(\Bc)$ holds.
  \begin{figure}[htbp]
    \centering
    \begin{tikzpicture}[scale=3]
      \coordinate (0) at (27:1);
      \coordinate (1) at (80:1);
      \coordinate (2) at (167:1);
      \coordinate (3) at (211:1);
      \coordinate (4) at (278:1);
      \coordinate (5) at (317:1);
      \coordinate (x) at (intersection of 0--3 and 1--4);
      \coordinate (y) at (intersection of 2--5 and 1--4);
      \coordinate (z) at (intersection of 0--3 and 2--5);
      \fill[blue!20] (x) -- (y) -- (z);
      \node[inner sep=0.1em, fill, draw, circle, label=left:{$x$}] (p) at (-80:0.2) {};
      \draw ($(0) !1.3! (1)$) -- ($(1) !1.3! (0)$);
      \draw ($(1) !1.3! (2)$) -- ($(2) !1.3! (1)$);
      \draw ($(2) !1.3! (3)$) -- ($(3) !1.3! (2)$);
      \draw ($(3) !1.3! (4)$) -- ($(4) !1.3! (3)$);
      \draw ($(4) !1.3! (5)$) -- ($(5) !1.3! (4)$);
      \draw ($(0) !1.3! (5)$) -- ($(5) !1.3! (0)$);
      \draw ($(4) !1.2! (0)$) -- ($(0) !1.2! (4)$);
      \draw ($(0) !1.2! (2)$) -- ($(2) !1.2! (0)$);
      \draw ($(0) !1.2! (3)$) -- ($(3) !1.2! (0)$);
      \draw ($(1) !1.2! (3)$) -- ($(3) !1.2! (1)$);
      \draw ($(1) !1.2! (4)$) -- ($(4) !1.2! (1)$);
      \draw ($(2) !1.2! (4)$) -- ($(4) !1.2! (2)$);
      \draw ($(1) !1.2! (5)$) -- ($(5) !1.2! (1)$);
      \draw ($(2) !1.2! (5)$) -- ($(5) !1.2! (2)$);
      \draw ($(3) !1.2! (5)$) -- ($(5) !1.2! (3)$);
      \draw[->] ($(0) !0.5! (1)$) -- ($($(0) ! 0.5 ! (1) $) !0.2! (p)$ );
      \draw[->] ($(0) !0.5! (2)$) -- ($($(0) ! 0.5 ! (2) $) !0.2! (p)$ );
      \draw[->] ($(0) !0.52! (3)$) -- ($($(0) ! 0.52 ! (3) $) !-0.7! (p)$ );
      \draw[->] ($(0) !0.5! (4)$) -- ($($(0) ! 0.5 ! (4) $) !0.2! (p)$ );
      \draw[->] ($(0) !0.5! (5)$) -- ($($(0) ! 0.5 ! (5) $) !0.2! (p)$ );
      \draw[->] ($(1) !0.5! (2)$) -- ($($(1) ! 0.5 ! (2) $) !0.2! (p)$ );
      \draw[->] ($(1) !0.5! (3)$) -- ($($(1) ! 0.5 ! (3) $) !0.2! (p)$ );
      \draw[->] ($(1) !0.6! (4)$) -- ($($(1) ! 0.6 ! (4) $) !-1.0! (p)$ );
      \draw[->] ($(1) !0.5! (5)$) -- ($($(1) ! 0.5 ! (5) $) !0.2! (p)$ );
      \draw[->] ($(2) !0.5! (3)$) -- ($($(2) ! 0.5 ! (3) $) !0.2! (p)$ );
      \draw[->] ($(2) !0.5! (4)$) -- ($($(2) ! 0.5 ! (4) $) !0.2! (p)$ );
      \draw[->] ($(2) !0.55! (5)$) -- ($($(2) ! 0.55 ! (5) $) !-1! (p)$ );
      \draw[->] ($(3) !0.5! (4)$) -- ($($(3) ! 0.5 ! (4) $) !0.2! (p)$ );
      \draw[->] ($(3) !0.5! (5)$) -- ($($(3) ! 0.5 ! (5) $) !0.2! (p)$ );
      \draw[->] ($(4) !0.5! (5)$) -- ($($(4) ! 0.5 ! (5) $) !0.2! (p)$ );
    \end{tikzpicture}
    \caption{An inconsistent sign system for the arrangement of rank three determined by six generic vectors in $\Rb^3$, combinatorially equivalent to $\Bc(6,3)$. The shaded region represents a simple chamber.}\label{fig:ex-generic-chamber}
  \end{figure}
\end{ex}

\begin{rem}\label{rem:Z^3}
      The rank-three arrangement in \zcref{ex:B63} can also be shown to be non-$K(\pi,1)$ by the classical $\Zb^3$-test/simple-triangle obstruction.
      In the affine line-arrangement setting, this obstruction is based on the fact that if the fundamental group contains a free abelian subgroup of rank three, then the arrangement is not $K(\pi,1)$
      (\cite{Hat75,FR87}, see also \cite[Proposition 8.5.]{Yos24b}).
      A \emph{simple triangle} in the sense of Edelman--Reiner \cite{ER95} is a configuration of three non-concurrent lines such that no other line of the arrangement passes through any of their pairwise intersection points.
      The existence of a simple triangle is a sufficient condition for the $\Zb^3$-test.
      In \zcref{ex:B63}, the simple triangle is precisely the triangle corresponding to the simple chamber in the affine arrangement obtained by deconing $\Bc$.
\end{rem}

\section{Manin--Schechtman arrangements and their combinatorics}\label{sec:MSnk-BBA-poset}
\subsection{Definition and Examples}
We begin by recalling the definition of the Manin--Schechtman arrangements~\cite{MS89}.
Fix a generic hyperplane arrangement $\Ac^0 = \{ H_1^0, \dots, H_n^0 \}$ with normal vectors $\alpha_i \in \Cb^k$, that is, $\codim \bigcap_{i \in I} H_i = \min \{ \abs{I}, k \}$ for any $I \subset [n]$.

Consider the space of all parallel translates of $\Ac^0$:
\begin{equation*}
  \Sb = \Sb(\Ac^0) \coloneq \{ (H_1^{t_1}, \dots, H_n^{t_n}) \mid t_1, \dots, t_n \in \Cb \},
\end{equation*}
where $H_i^{t_i} \coloneq \alpha_i^{-1}(t_i)$.

For each subset $I \subset [n]$ with $\abs{I} = k + 1$, define
\begin{equation*}
  D_I = D_I(\Ac^0) \coloneq \left\{ (H_1^{t_1}, \dots, H_n^{t_n}) \in \Sb \
  \middle| \ \bigcap_{i \in I} H_i^{t_i} \neq \emptyset \right \}.
\end{equation*}
Equivalently, $D_I$ consists of those translates for which the subarrangement indexed by $I$ fails to be generic.
Each $D_I$ is a hyperplane in $\Sb \simeq \Cb^n$.
The \emph{Manin--Schechtman arrangement} is the hyperplane arrangement
\begin{equation*}
  \Bc(n,k,\Ac^0) \coloneq \left\{ D_I \mathrel{}\middle|\mathrel{} I \in \binom{[n]}{k+1} \right\},
\end{equation*}
introduced in~\cite{MS89} as a higher analog of the braid arrangement.
In particular, $\Bc(n,1,\Ac^0)$ coincides with the braid arrangement $\mathrm{Br}(n) = \{ \{ x_i = x_j \} \subset \Cb^n \mid 1 \le i < j \le n \}$.

For simplicity, the intersection $\bigcap_{I \in \binom{T}{k+1}} D_I$ of $D_I = D_I(\Ac^0)$, $I \subset T$ will be denoted by $D_T = D_T(\Ac^0)$ for any $T \subset [n]$ with $\abs{T} \ge k + 1$.
Then the center of the Manin--Schechtman arrangement $\Bc(n,k,\Ac^0)$ is $D_{[n]}$.
Furthermore, we have
\begin{align*}
  D_T & = \left\{ (H_1^{t_1}, \dots, H_n^{t_n}) \in \Sb \mathrel{}\middle|\mathrel{} \bigcap_{i \in T} H_i^{t_i} \neq \emptyset \right\}         \\
      & = \left\{ (H_1^{t_1}, \dots, H_n^{t_n}) \in \Sb \mathrel{}\middle|\mathrel{} t_i = \alpha_i(x), x \in \Cb^k \text{ if } i \in T \right\}
\end{align*}
and $\codim D_T = \abs{T} - k$.
Hence, the rank of $\Bc(n, k, \Ac^0)$ is $\codim D_{[n]} = n - k$ and Manin--Schechtman arrangements are not essential.

For each $(k + 1)$-subset $I = \{ i_1 < \dots < i_{k+1} \} \subset [n]$, the normal vector $\alpha_I \in \Sb^\ast$ of $D_I$ is given by
\begin{equation*}
  (\alpha_I)_j =
  \begin{cases}
    (-1)^{p-1} \det(\alpha_{i_1}, \dots, \alpha_{i_{p-1}}, \alpha_{i_{p+1}}, \dots, \alpha_{i_{k+1}}) & \text{if } j = i_p,    \\
    0                                                                                                 & \text{if } j \notin I.
  \end{cases}
\end{equation*}
This follows immediately from the following linear relation
\begin{equation*}
  \sum_{p=1}^{k+1}(\alpha_I)_{i_p} \alpha_{i_p} = 0.
\end{equation*}

\begin{ex}
  Let $\Ac^0$ be the generic arrangement in $\Cb^2$ with normal vectors $(\alpha_1\ \dots\ \alpha_4) =
    \begin{pmatrix}
      1 & 0 & 1 & -1 \\
      0 & 1 & 1 & 1
    \end{pmatrix}.$
  Then $\Bc(4,2,\Ac^0) = \{ D_{123}, D_{124}, D_{134}, D_{234} \}$ whose normal vectors are
  \begin{equation*}
    \begin{pmatrix}
      \alpha_{123} \\
      \alpha_{124} \\
      \alpha_{134} \\
      \alpha_{234}
    \end{pmatrix}=
    \begin{pmatrix}
      -1 & -1 & 1  & 0  \\
      1  & -1 & 0  & 1  \\
      2  & 0  & -1 & 1  \\
      0  & 2  & -1 & -1
    \end{pmatrix}.
  \end{equation*}
\end{ex}
\begin{rem}
  The essentialization of the Manin--Schechtman arrangement $\Bc(n,k,\Ac^0)$ can also be realized as a hyperplane arrangement determined by $n$ points in general position in $\Cb^{n-k}$ as shown by Falk \cite{Fal94}.

  Since the subspace $D_T$ can be written as
  \begin{align*}
    D_T & = \left\{(H_1^{t_1}, \dots, H_n^{t_n}) \in \Sb \mathrel{}\middle|\mathrel{} t_i = \alpha_i(x) \ (x \in \Cb^k) \text{ for all } i \in T \right\} \\
        & = \operatorname{Im} \left( (\alpha_1, \dots, \alpha_n) \colon \Cb^k \to \Sb(\Ac^0) \right) + \Sb(\{ H_i \in \Ac^0 \mid i \notin T \})           \\
        & = D_{[n]} + \Sb(\{H_i \in \Ac^0 \mid i \notin T \}) ,
  \end{align*}
  the essentialization of $\Bc(n,k,\Ac^0)$ is linearly equivalent to an arrangement spanned by $\overline{\mathbf{e}}_i$, $i \in [n]$ in $\Sb(\Ac^0) / D_{[n]}$, where $\overline{\mathbf{e}}_i = \mathbf{e}_{i} + D_{[n]}$ and the vectors $\mathbf{e}_{i}$ are the canonical basis vectors of $\Sb(\Ac^0)$.

  By this observation, the essentialization of $\Bc(n,k,\Ac^0)$ is linearly equivalent to the arrangement spanned by $\beta_1, \dots, \beta_n \in \Cb^{n-k}$ where $\beta_1,\dots,\beta_n$ form a Gale dual of the normal vectors $\alpha_1, \dots, \alpha_n$ of $\Ac^0$, that is, the row space of the matrix $(\beta_1 \dots \beta_n)$ is orthogonal to that of the matrix $(\alpha_1 \dots \alpha_n)$.
\end{rem}
Using the above description, we give the following example.
\begin{ex}\label{ex:H3}
  Let $\phi = \frac{1+\sqrt{5}}{2}$ be the golden ratio and let $\Ac_D^0$ be the arrangement whose normal vectors are
  \begin{equation*}
    (\alpha_1 \dots \alpha_n) =
    \begin{pmatrix}
      1    & 1     & 0    & 0     & \phi & -\phi \\
      0    & 0     & \phi & -\phi & 1    & 1     \\
      \phi & -\phi & 1    & 1     & 0    & 0
    \end{pmatrix}
  \end{equation*}
  Then the essential part of $\Bc(6,3,\Ac_D^0)$ is linearly equivalent to the arrangement shown in \zcref{fig:H3} and spanned by the six column vectors in the following matrix
  \begin{equation*}
    \begin{pmatrix}
      -1   & 1    & \phi & \phi & 0    & 0    \\
      0    & 0    & -1   & 1    & \phi & \phi \\
      \phi & \phi & 0    & 0    & -1   & 1
    \end{pmatrix}.
  \end{equation*}
  Furthermore, $\Bc(6,3,\Ac_D^0)$ has the intersection lattice of minimal cardinality for $(n,k) = (6,3)$ over the real numbers (see \cite{SS25}), and is also linearly equivalent to the Coxeter arrangement of type $H_3$.
  Alternatively, we can confirm directly that $\Bc(6,3,\Ac_D^0)$ is a simplicial arrangement.
  Therefore, $\Bc(6,3,\Ac_D^0)$ is a $K(\pi,1)$ arrangement.
  \begin{figure}[h]
    \centering
    \begin{tikzpicture}[scale=1]
      \coordinate (0) at (18:1);
      \coordinate (1) at (90:1);
      \coordinate (2) at (162:1);
      \coordinate (3) at (234:1);
      \coordinate (4) at (306:1);
      \coordinate (5) at (0:0);
      \draw ($(0) !3! (1)$) -- ($(1) !3! (0)$);
      \draw ($(1) !3! (2)$) -- ($(2) !3! (1)$);
      \draw ($(2) !3! (3)$) -- ($(3) !3! (2)$);
      \draw ($(3) !3! (4)$) -- ($(4) !3! (3)$);
      \draw ($(4) !3! (0)$) -- ($(0) !3! (4)$);
      \draw ($(0) !2! (2)$) -- ($(2) !2! (0)$);
      \draw ($(0) !2! (3)$) -- ($(3) !2! (0)$);
      \draw ($(1) !2! (3)$) -- ($(3) !2! (1)$);
      \draw ($(1) !2! (4)$) -- ($(4) !2! (1)$);
      \draw ($(2) !2! (4)$) -- ($(4) !2! (2)$);
      \draw ($(0) !4! (5)$) -- ($(5) !3! (0)$);
      \draw ($(1) !4! (5)$) -- ($(5) !3! (1)$);
      \draw ($(2) !4! (5)$) -- ($(5) !3! (2)$);
      \draw ($(3) !4! (5)$) -- ($(5) !3! (3)$);
      \draw ($(4) !4! (5)$) -- ($(5) !3! (4)$);
    \end{tikzpicture}
    \caption{The projectivization of $\Bc(6,3,\Ac_D^0)$ with the smallest possible intersection lattice, linearly equivalent to the Coxeter arrangement $H_3$}
    \label{fig:H3}
  \end{figure}
\end{ex}

\subsection{Intersection lattice}
The intersection lattice of $\Bc(n,k,\Ac^0)$ may depend on the choice of the underlying generic arrangement $\Ac^0$ (see \cite{Fal94}).
For fixed $n$ and $k$, Bayer and Brandt~\cite{BB97} conjectured that the largest intersection lattice of $\Bc(n,k)$ is described by a certain combinatorial poset $P(n,k)$.
This conjecture was later proved by Athanasiadis~\cite{Ath99}, who showed that, for a very generic arrangement $\Ac^0$, the intersection lattice of $\Bc(n,k,\Ac^0)$ is isomorphic to $P(n,k)$.

\begin{defi}
  We define the poset $P(n,k)$ as follows.
  The elements of $P(n,k)$ are finite collections $\Tb$ of subsets of $[n]$ satisfying:
  \begin{itemize}
    \item[(i)] $\abs{T} \ge k + 1$ for every $T \in \Tb$ ;
    \item[(ii)] $\Tb$ is an antichain, i.e., for distinct elements $T$ and $T'$ in $\Tb$ neither $T \subset T'$ nor $T \supset T'$ holds ;
    \item[(iii)] for every subset $\Tb' \subset \Tb$ with $\abs{\Tb'} \ge 2$ we have
          \begin{equation*}
            \abs{\bigcup_{T \in \Tb'} T} - k > \sum_{T \in \Tb'}(\abs{T} - k) .
          \end{equation*}
  \end{itemize}
  The partial order on $P(n, k)$ is given by
  \begin{equation*}
    \Tb \preceq \Tb' \quad \Longleftrightarrow \quad \text{for every } T \in \Tb \text{ there exists } T' \in \Tb' \text{ with } T \subset T'.
  \end{equation*}
\end{defi}

\begin{thm}[Bayer--Brandt\cite{BB97}, Athanasiadis\cite{Ath99}]\label{thm:BB-poset}
  Let $n \ge k + 1 \ge 2$ and let $\Oc(n,k)$ be the set of real (resp. complex) generic arrangements $\Ac^0$ of rank $k$ consisting of $n$ hyperplanes such that $L(\Bc(n,k,\Ac^0)) \cong P(n,k)$. Then $\Oc(n,k)$ is a non-empty Zariski open set in the space of real (resp. complex) generic arrangements of rank $k$ consisting of $n$ hyperplanes.

  Furthermore, for $\Ac^0 \in \Oc(n,k)$, the isomorphism between $P(n,k)$ and $L(\Bc(n,k,\Ac^0))$ is given by
  \begin{equation*}
    \Tb \longmapsto
    \bigcap_{T \in \Tb}\ D_T.
  \end{equation*}
  The rank function on $P(n, k)$ is
  \begin{equation*}
    \rk(\Tb) = \sum_{T \in \Tb}(\abs{T} - k),
  \end{equation*}
  and the codimension of the intersection corresponding to $\Tb$ is $\rk(\Tb)$.
\end{thm}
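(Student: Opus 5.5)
The plan is to reduce every lattice-theoretic statement to the rank of an explicit matrix whose entries are polynomials in the coordinates of $\alpha_1,\dots,\alpha_n$. Then openness becomes automatic, and nonemptiness becomes a generic-rank computation.

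\textbf{Step 1: a universal upper bound on codimension.} For $T\subset[n]$ with $\abs{T}\ge k+1$, the annihilator $D_T^{\perp}\subset \Sb^\ast$ is the space $R_T$ of linear relations $\sum_{i\in T}c_i\alpha_i=0$. It has dimension $\abs{T}-k$ by genericity, and it is spanned by the $\alpha_I$ with $I\subset T$. Hence $\codim\bigcap_{T\in\Tb}D_T=\dim\sum_{T\in\Tb}R_T\le\sum_{T\in\Tb}(\abs{T}-k)$ for every generic $\Ac^0$. Equality for a fixed $\Tb$ says that a certain matrix, namely the $\alpha_I$ with $I\subset T\in\Tb$, has rank equal to the right-hand side. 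That is the nonvanishing of some maximal minor, which is a Zariski open condition in the $\alpha$'s.

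\textbf{Step 2: the lattice statement follows from finitely many such rank equalities.} Suppose equality $\dim\sum_{T\in\Tb}R_T=\sum_{T}(\abs{T}-k)$ holds for all antichains $\Tb$ satisfying (i)--(iii). Suppose also that containments $R_{T'}\subset\sum_{T\in\Tb}R_T$ occur only when $T'\subset T$ for some $T\in\Tb$; this is again a rank condition on finitely many matrices. I then check three things.

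First, the map $\Tb\mapsto\bigcap_{T\in\Tb}D_T$ is injective and order-reflecting, by the second condition. It has the stated rank function, by the first.

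Second, the map is surjective. Given a flat $X$, let $\Tb_X$ be the maximal sets $T$ with $D_T\supseteq X$. Then $X=\bigcap_{T\in\Tb_X}D_T$.

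Third, $\Tb_X\in P(n,k)$. Suppose (iii) failed for some $\Tb'\subset\Tb_X$, and put $U=\bigcup\Tb'$. Then $D_U\subseteq\bigcap_{T\in\Tb'}D_T$, and $\codim D_U=\abs{U}-k\le\sum_{T\in\Tb'}(\abs{T}-k)$. Applying the equality from the first condition to a suitable sub-antichain forces $\bigcap_{T\in\Tb'}D_T=D_U\supseteq X$. This contradicts the maximality of the members of $\Tb'$. I would organise this as an induction on $\abs{\Tb'}$.

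Since only finitely many $\Tb$ and $T'$ occur, $\Oc(n,k)$ is a finite intersection of Zariski open sets. The real case follows because real points are Zariski dense, so a nonzero polynomial does not vanish identically on the real generic locus.

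\textbf{Step 3 (the main obstacle): nonemptiness.} The parameter space is irreducible, so it suffices to show that each single rank condition is satisfied by some $\Ac^0$, equivalently by $\alpha$ with algebraically independent entries. I would pass to the Gale dual, where $R_T$ is the span of the coordinate vectors $\mathbf{e}_i$, $i\in T$, intersected with the generic $(n-k)$-dimensional row space $W$. The question becomes whether the subspaces $W\cap\langle\mathbf{e}_i:i\in T\rangle$ are independent for a generic $W$. The strict inequality (iii) is exactly a Rado/Hall-type hypothesis.

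I would first try induction on $\abs{\Tb}$. Add one set $T_0$, and degenerate $W$ so that the new summand is visibly transverse to the old sum. Condition (iii) would be applied to $\Tb'\cup\{T_0\}$ to guarantee enough free dimensions.

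If that fails, I would fall back on a matroid-union argument. Realise the sum as the rank of a union of the generic truncated matroids $U_{\abs{T}-k}$ on the ground sets $T$. Then compute that rank via the Edmonds/Nash-Williams formula, checking that (iii) makes every proper deficiency positive.

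I expect this step, proving that generic subspaces achieve the combinatorial upper bound, to be the hard part. It is the content of Athanasiadis's resolution of the Bayer--Brandt conjecture.
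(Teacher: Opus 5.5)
The paper gives no proof of this theorem; it quotes it from Bayer--Brandt and Athanasiadis and uses it as a black box. Your Steps~1--2 set up the natural framework. The codimension of $\bigcap_{T\in\Tb}D_T$ is the rank of the matrix of the $\alpha_I$ with $I\subset T\in\Tb$. Lower bounds on rank are Zariski open. Irreducibility reduces nonemptiness to one condition at a time. Even so, the proposal does not prove the statement, because Step~3 is the entire content of the Bayer--Brandt conjecture and it is left as two unexecuted plans.

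For the inductive plan, you never specify the degeneration. Since the entries of your matrix are polynomials in the $\alpha_i$, it would suffice to exhibit one explicit (possibly non-generic) $\alpha$ at which a fixed choice of $\sum_T(\abs{T}-k)$ rows has full rank. But producing such a point is exactly the problem.

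The matroid-union fallback cannot work as described. The rank of the stacked matrix of the blocks $(\alpha_I)_{I\subset T}$ is only bounded \emph{above} by the rank of the union of the matroids $U_{\abs{T}-k}$ on the sets $T$. Equality holds only when the blocks are represented generically relative to one another, whereas here they are all cut out of the single $(n-k)$-dimensional space $\ker(\alpha_1,\dots,\alpha_n)$. For instance, with $k=2$ and $\Tb=\binom{[4]}{3}$, the union has rank $4$ (the four triples have a system of distinct representatives), while $\sum_T R_T=R_{[4]}$ has dimension $2$. So Edmonds--Nash-Williams can only recover the trivial upper bound $\sum_T(\abs{T}-k)$. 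The lower bound, which is what nonemptiness requires, is untouched.

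Step~2 has a second gap. In your third check, let $\Tb'\subset\Tb_X$ violate (iii), with $\abs{\Tb'}\ge 2$ and $U=\bigcup\Tb'$. Every $\Tb''\in P(n,k)$ whose members each lie in a member of $\Tb'$ (in particular every sub-antichain) satisfies $\rk\Tb''<\abs{U}-k$. This follows from (iii) if $\abs{\Tb''}\ge 2$, and from $T\subsetneq U$ if $\Tb''$ is a singleton. Hence no equality from your first family of conditions can give $\codim\bigcap_{T\in\Tb'}D_T\ge\abs{U}-k$, which is what $\bigcap_{T\in\Tb'}D_T=D_U$ needs.

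What is required is a generic rank statement for families that \emph{violate} (iii), and it is a genuinely separate condition. Take $k=2$, $\alpha_i=(1,s_i)$, and the minimal violator $\Tb'=\{123,145,246,356\}$. The four $\alpha_I$ are independent for generic $s$, so $\bigcap_{T\in\Tb'}D_T=D_{[6]}$. But at $s=(0,1,2,3,4,5)$, which is a generic arrangement, one has $\alpha_{123}-\alpha_{145}+\alpha_{246}-\alpha_{356}=0$.

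Your non-containment conditions happen to cover this example. Deducing the general closure property from them would still need a further combinatorial argument about $P(n,k)$ that you do not supply. In any case, the nonemptiness of the non-containment conditions is never addressed: Step~3 only discusses independence of the $R_T$ for $\Tb\in P(n,k)$.
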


Following Bayer and Brandt~\cite{BB97}, we say that the underlying arrangement $\Ac^0$ is \emph{very generic} if the intersection lattice of $\Bc(n,k,\Ac^0)$ is isomorphic to $P(n,k)$; we call it \emph{non-very generic} otherwise.

Since the set of underlying very generic arrangements $\Ac^0$ is a non-empty Zariski open set $\Oc(n,k)$ of the parameter space of generic arrangements, it is connected in the complex topology.
Hence, any two very generic arrangements are lattice-isotopic.
By Randell's lattice-isotopy theorem~\cite{Ran89}, the complements of lattice-isotopic arrangements are diffeomorphic, and hence have the same homotopy type.

\begin{prop}\label{prop:same-homotopy-type}
  The homotopy types of the complements of the Manin--Schechtman arrangements $\Bc(n,k,\Ac^0)$ for very generic arrangements $\Ac^0$ are independent of the choice of a very generic arrangement.
\end{prop}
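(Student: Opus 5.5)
The plan is to turn the claim into an instance of Randell's lattice-isotopy theorem, following the discussion just before the statement. Fix the field ($\Rb$ or $\Cb$; in the real case we pass to complexifications, since the complement in question is $M(\Bc(n,k,\Ac^0)\otimes\Cb)$). Let $\Ac^0_0$ and $\Ac^0_1$ be two very generic arrangements, i.e.\ two points of $\Oc(n,k)$. Parametrize the arrangements by their normal vectors $(\alpha_1,\dots,\alpha_n)\in(\Cb^k)^n$. By \zcref{thm:BB-poset}, $\Oc(n,k)$ is a non-empty Zariski open subset of the complex parameter space of generic arrangements, which is itself Zariski open in $(\Cb^k)^n$. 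A non-empty Zariski open subset of an irreducible affine space is path-connected in the classical topology. So I choose a continuous path $\gamma\colon[0,1]\to\Oc(n,k)$ from $\Ac^0_0$ to $\Ac^0_1$. A real very generic arrangement is also a complex point of the same open set, so the real case reduces to this one.

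Next I check that $t\mapsto\Bc(n,k,\gamma(t))$ is a lattice-isotopy. The normal vectors $\alpha_I(t)$ of the hyperplanes $D_I(\gamma(t))$ are given by the maximal-minor formula above, so they are polynomial in the entries of $\gamma(t)$ and hence continuous in $t$. They never vanish, because genericity makes every $k\times k$ minor of a $(k+1)$-subset non-zero. The hyperplanes are indexed by the fixed set $\binom{[n]}{k+1}$ throughout the path. By \zcref{thm:BB-poset}, for every $t$ the map $\Tb\mapsto\bigcap_{T\in\Tb}D_T(\gamma(t))$ is an isomorphism $P(n,k)\cong L(\Bc(n,k,\gamma(t)))$. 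This isomorphism is compatible with the labelling of the hyperplanes by $I$. Consequently, which subfamilies of hyperplanes intersect in which codimension is independent of $t$, which is exactly the definition of a lattice-isotopy.

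Randell's theorem \cite{Ran89} then gives a diffeomorphism between the complements at $t=0$ and $t=1$, and in particular a homotopy equivalence. The theorem concerns central arrangements of a fixed ambient space $\Sb\cong\Cb^n$, which is our situation.

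The only step that needs care is the claim that the combinatorial type is constant along the path together with the labelling, not just up to abstract poset isomorphism. This follows because the isomorphism in \zcref{thm:BB-poset} is explicit and natural in the labels, so the set of labelled intersections of each codimension is the same for every $t$. Beyond that, nothing needs to be proved beyond invoking the connectedness of Zariski open sets and Randell's theorem.
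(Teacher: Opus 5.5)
Your proposal is correct and follows the same route as the paper. The paper also uses that $\Oc(n,k)$ is a non-empty Zariski open set, hence connected in the complex topology, so any two very generic arrangements are lattice-isotopic (the labelled lattice being fixed by the explicit isomorphism $\Tb\mapsto\bigcap_{T\in\Tb}D_T$), and then concludes by Randell's theorem. You only spell out details the paper leaves implicit; one small addition is that Randell's theorem is stated for smooth families, so you should take the path smooth, which is always possible in an open subset of $\Cb^{kn}$.
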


Therefore, to prove that $\Bc(n,k,\Ac^0)$ for very generic arrangements is not $K(\pi,1)$, it suffices to exhibit a single realization with the desired property.
In particular, the homotopy type of the complexification of the real-coefficient Manin--Schechtman arrangements coincides with that of the complex-coefficient one.
We will use this observation to show that $\Sigma_2(\Bc(5,2)) = \Sigma_3(\Bc(5,2))$ (see \zcref{sec:pfoflem41}).

Until \zcref{sec:non-very}, this paper focuses on real very generic arrangements.
We will henceforth denote the Manin--Schechtman arrangements by $\Bc(n,k) \coloneq \Bc(n,k,\Ac^0)$ for real very generic arrangements $\Ac^0$, and we treat $\Bc(n,k)$ as real arrangements.

\subsection{Localization}
Fix an isomorphism between $P(n,k)$ and $L(\Bc(n,k))$ in \zcref{thm:BB-poset} and let $\Tb(X) \in P(n,k)$ denote the collection corresponding to $X \in L(\Bc(n,k))$ by the isomorphism.
For $T \subset [n]$, we denote by $\Bc(T,k)$ the subarrangement of $\Bc(n,k)$ consisting of the hyperplanes $\big\{ D_I \in \Bc(n,k) \mid I \in \binom{T}{k+1} \big\}$.

The following proposition is a refinement of Koizumi--Numata--Takemura \cite{KNT12}.

\begin{prop}\label{prop:localization-direct-sum}
  Let $X \in L(\Bc(n,k))$.
  Then
  \begin{equation*}
    \Bc(n,k)_X^{\mathrm{ess}} \simeq \bigoplus_{T \in \Tb(X)} \Bc(T,k)^{\mathrm{ess}}.
  \end{equation*}
\end{prop}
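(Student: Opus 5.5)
The plan is to identify the localization $\Bc(n,k)_X$ explicitly as a set of hyperplanes, and then split the quotient space $\Sb/X$ as a direct sum adapted to the blocks $T\in\Tb(X)$.

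\emph{Step 1: identify the hyperplanes.} First I would determine which $D_I$ contain $X$. Put $X=\bigcap_{T\in\Tb(X)} D_T$. By \zcref{thm:BB-poset}, $D_I\supseteq X$ holds iff $\Tb(D_I)=\{I\}\preceq\Tb(X)$. This means $I\subset T$ for some $T\in\Tb(X)$. Hence
\begin{equation*}
  \Bc(n,k)_X=\bigcup_{T\in\Tb(X)}\Bc(T,k),
\end{equation*}
and this union is disjoint. Indeed, if $I\subset T\cap T'$ with $T\neq T'$, then $\abs{T\cap T'}\ge k+1$. Condition (iii) applied to $\{T,T'\}$ gives
\begin{equation*}
  \abs{T\cup T'}-k>\abs{T}+\abs{T'}-2k,
\end{equation*}
that is, $\abs{T\cap T'}<k$. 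This is a contradiction.

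\emph{Step 2: split the quotient.} The essentialization of $\Bc(n,k)_X$ lives in $\Sb/X$, and its forms are the $\alpha_I$ with $I\subset T$ for some $T$. The forms $\{\alpha_I : I\in\binom{T}{k+1}\}$ span $(\Sb/D_T)^*\subset \Sb^*$, which has dimension $\abs{T}-k$. So $\Bc(T,k)^{\mathrm{ess}}$ is the arrangement of these forms on $\Sb/D_T$.

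The key claim is that the natural map
\begin{equation*}
  \Sb/X\longrightarrow\bigoplus_{T\in\Tb(X)}\Sb/D_T
\end{equation*}
is an isomorphism. It is injective, since $X=\bigcap_T D_T$. Its dimensions match, because
\begin{equation*}
  \dim \Sb/X=\codim X=\rk\Tb(X)=\sum_T(\abs{T}-k)
\end{equation*}
by \zcref{thm:BB-poset}.

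\emph{Step 3: conclude.} Under this isomorphism, each form $\alpha_I$ with $I\subset T$ factors through the projection to the summand $\Sb/D_T$. Therefore the arrangement $\Bc(n,k)_X$ on $\Sb/X$ is carried to the direct sum $\bigoplus_T\Bc(T,k)^{\mathrm{ess}}$. Each summand is essential because its forms span $(\Sb/D_T)^*$, so the direct sum is the essentialization.

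\emph{Main obstacle.} I expect the main difficulty to lie in Step 1 and the dimension count of Step 2. One must use exactly that the lattice isomorphism of \zcref{thm:BB-poset} preserves order, so that $D_I\supseteq X$ iff $\{I\}\preceq\Tb(X)$, together with the codimension formula. These facts are what the very-generic hypothesis supplies. For non-very generic $\Ac^0$ the sum of the $(\Sb/D_T)^*$ could fail to be direct, and the argument would break. Everything else is linear algebra.
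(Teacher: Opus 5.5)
Your proof is correct and is essentially the paper's argument in dual form: the paper shows $X^\perp=\sum_{T\in\Tb(X)} D_T^\perp$ and uses the same dimension count $\dim X^\perp=\sum_{T}(\abs{T}-k)$ to conclude that the sum is direct, which is exactly the dual of your isomorphism $\Sb/X\cong\bigoplus_{T}\Sb/D_T$. Your explicit check via condition (iii) that no $D_I$ lies in two blocks $\Bc(T,k)$ is a small detail the paper leaves implicit.
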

\begin{proof}
  We define the normal space of the localization by $X^{\perp} \coloneq \{ \alpha \in \Sb^\ast \mid X \subset \ker \alpha \}$.
  By the definition of elements in $P(n,k)$ (\zcref{thm:BB-poset}), the hyperplanes in the localization $\Bc(n,k)_X$ are precisely those indexed by $(k + 1)$-subsets contained in one of $T \in \Tb(X)$.
  Hence, we have $X^{\perp} = \sum_{T \in \Tb(X)} D_T^\perp$.
  Moreover, for each $T \in \Tb$ we have $\dim D_T^\perp = \abs{T} - k$ while the rank formula in \zcref{thm:BB-poset} gives $\dim X^\perp = \codim(X) = \sum_{T \in \Tb(X)}(\abs{T} - k)$.
  Therefore, we have $\dim X^\perp = \sum_{T \in \Tb(X)} \dim D_T^\perp$ and the sum is direct, i.e., $X^\perp = \bigoplus_{T \in \Tb(X)} D_T^\perp$.
  Since the hyperplanes of $\Bc(n,k)_X$ are exactly those coming from the factors $\Bc(T, k)$, the claimed arrangement-level decomposition follows.
\end{proof}

\begin{prop}\label{prop:mixed-no-top-jump}
  Let $X \in L(\Bc(n,k))$ be an intersection of codimension $r = \sum_{T \in \Tb(X)}(\abs{T} - k)$ and assume that $\abs{\Tb(X)} \ge 2$.
  Then the following holds:
  \begin{equation*}
    \Sigma_{r-1} \bigl( \Bc(n,k)_X \bigr) = \Sigma_r \bigl( \Bc(n,k)_X \bigr).
  \end{equation*}
\end{prop}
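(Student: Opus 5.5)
We reduce to a product situation and then use \zcref{lem:sigma-product-decomposition}. By \zcref{prop:localization-direct-sum}, the essentialization of $\Bc(n,k)_X$ is isomorphic to $\bigoplus_{T \in \Tb(X)} \Bc(T,k)^{\mathrm{ess}}$. By \zcref{lem:sigma-essentialization}, the filtrations of $\Bc(n,k)_X$ and of this direct sum agree. Write $\Tb(X) = \{T_1, \dots, T_m\}$ with $m \ge 2$. Put $\Ac_i \coloneq \Bc(T_i,k)^{\mathrm{ess}}$, and let $\ell_i \coloneq \abs{T_i} - k$ be its rank. Then $r = \sum_i \ell_i$, and every $\ell_i \ge 1$ by condition (i) in the definition of $P(n,k)$. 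It is therefore enough to show
\begin{equation*}
  \Sigma_{r-1}(\Ac_1 \oplus \cdots \oplus \Ac_m) = \Sigma_r(\Ac_1 \oplus \cdots \oplus \Ac_m).
\end{equation*}

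Next we apply \zcref{lem:sigma-product-decomposition}, iterated over the $m$ summands; an easy induction on $m$ gives the multi-factor version. This yields
\begin{equation*}
  \Sigma_p\Bigl(\bigoplus_i \Ac_i\Bigr) = \prod_i \Sigma_{\min(p,\ell_i)}(\Ac_i).
\end{equation*}
Because $m \ge 2$ and each $\ell_j \ge 1$, we have $\ell_i = r - \sum_{j \ne i} \ell_j \le r - 1$ for every $i$. Hence $\min(r-1,\ell_i) = \ell_i = \min(r,\ell_i)$ for all $i$. The two products then coincide factor by factor, which is the claim.

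One technical point needs care: the iteration of \zcref{lem:sigma-product-decomposition}. When $\Ac_1 \oplus \Ac_2$ is regarded as a single factor, its rank is $\ell_1 + \ell_2$, so applying the lemma again gives the term $\Sigma_{\min(p,\ell_1+\ell_2)}(\Ac_1 \oplus \Ac_2)$. The lemma then splits this as $\Sigma_{\min(p,\ell_1)}(\Ac_1) \times \Sigma_{\min(p,\ell_2)}(\Ac_2)$, using $\min(\min(p,\ell_1+\ell_2),\ell_i) = \min(p,\ell_i)$. This makes the induction routine.

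A second subtlety is the convention for $\Sigma_p$ when $p$ exceeds the rank. Here $\Sigma_p$ is defined as consistency at all flats of codimension at most $p$, so $\Sigma_p = \Sigma_\ell$ for $p \ge \ell$, which is consistent with the lemma. I expect no real obstacle beyond checking that the isomorphism in \zcref{prop:localization-direct-sum} is a linear equivalence of arrangements that preserves the hyperplane labels, so that sign vectors correspond and the $\Sigma$-sets transfer.
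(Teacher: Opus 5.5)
Your proposal is correct and follows essentially the same route as the paper. The paper likewise combines \zcref{prop:localization-direct-sum}, \zcref{lem:sigma-essentialization} and \zcref{lem:sigma-product-decomposition} into the product formula $\Sigma_p(\Bc(n,k)_X)=\prod_{T}\Sigma_{\min(p,\abs{T}-k)}(\Bc(T,k))$, and then uses $\abs{T}-k<r$ (from $\abs{\Tb(X)}\ge 2$) to conclude; your explicit checks of the multi-factor iteration and of the convention $\Sigma_p=\Sigma_{\mathrm{rank}}$ for $p\ge\mathrm{rank}$ fill in details the paper leaves implicit.
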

\begin{proof}
  By \zcref{prop:localization-direct-sum}, \zcref{lem:sigma-essentialization} and \zcref{lem:sigma-product-decomposition}, we have
  \begin{equation*}
    \Sigma_p \bigl( \Bc(n,k)_X \bigr) = \prod_{T \in \Tb(X)} \Sigma_{\min(p, \abs{T} - k)} \bigl( \Bc(T,k) \bigr)
  \end{equation*}
  for every $p$.
  Since $\abs{\Tb(X)} \ge 2$ and $\abs{T} - k \ge 1$ for each $T \in \Tb(X)$, we have $\abs{T} - k < \sum_{T' \in \Tb(X)}(\abs{T'} - k) = r$ for every $T \in \Tb(X)$.
  Therefore, for any $T \in \Tb(X)$, we have $\min(r - 1, \abs{T} - k) = \min(r,\abs{T} - k) = \abs{T} - k$.
  Hence, $\Sigma_{r-1} \bigl( \Bc(n,k)_X \bigr) = \Sigma_r \bigl( \Bc(n,k)_X \bigr)$ holds.
\end{proof}
By \zcref{prop:mixed-no-top-jump}, the proper inclusion $\Sigma_r \bigl( \Bc(n,k) \bigr) \supsetneq \Sigma_{r+1} \bigl( \Bc(n,k) \bigr)$ can occur only when $\abs{\Tb(X)} = 1$.
Combining this with \zcref{lem:extend}, we have the following corollary.
\begin{cor}\label{cor:reduction-to-single-set}
  The proper inclusion $\Sigma_r \bigl( \Bc(n,k) \bigr) \supsetneq \Sigma_{r+1} \bigl( \Bc(n,k) \bigr)$ holds if and only if there exists $T \in \binom{[n]}{k+r+1}$ such that $\Sigma_{r} \bigl( \Bc(T,k) \bigr) \supsetneq \Sigma_{r+1} \bigl( \Bc(T,k) \bigr)$.
\end{cor}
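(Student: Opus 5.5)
The plan is to prove the two implications separately, using the fact that for $T \subset [n]$ with $\abs{T} \ge k+1$ the subarrangement $\Bc(T,k)$ is exactly the localization $\Bc(n,k)_{D_T}$. This holds because, under the isomorphism of \zcref{thm:BB-poset}, $D_T$ corresponds to $\Tb = \{T\}$. The hyperplanes $D_I$ containing $D_T$ are then exactly those with $\{I\} \preceq \{T\}$, that is, with $I \subset T$. Moreover $\codim D_T = \abs{T} - k$, so for $\abs{T} = k+r+1$ the localization $\Bc(T,k)$ has rank $r+1$.

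For the direction ($\Leftarrow$), suppose $\Sigma_r(\Bc(T,k)) \supsetneq \Sigma_{r+1}(\Bc(T,k))$ for some $T \in \binom{[n]}{k+r+1}$. If we had $\Sigma_r(\Bc(n,k)) = \Sigma_{r+1}(\Bc(n,k))$, then \zcref{lem:extend} applied with $X = D_T$ would give $\Sigma_r(\Bc(n,k)_{D_T}) = \Sigma_{r+1}(\Bc(n,k)_{D_T})$. This is exactly $\Sigma_r(\Bc(T,k)) = \Sigma_{r+1}(\Bc(T,k))$, a contradiction.

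For the direction ($\Rightarrow$), take $\varepsilon \in \Sigma_r(\Bc(n,k)) \setminus \Sigma_{r+1}(\Bc(n,k))$. Then $\Hc(\varepsilon)$ is inconsistent at some $X$ with $\codim X \le r+1$, and since $\varepsilon \in \Sigma_r$ we must have $\codim X = r+1$. Set $\varepsilon' = \varepsilon\vert_{\Bc(n,k)_X}$.
\begin{itemize}
\item The flats of $\Bc(n,k)_X$ are the $Y \in L(\Bc(n,k))$ with $Y \supseteq X$, and $(\Bc(n,k)_X)_Y = \Bc(n,k)_Y$.
\item Consistency of $\varepsilon$ at every $Y$ of codimension $\le r$ therefore gives $\varepsilon' \in \Sigma_r(\Bc(n,k)_X)$.
\item Inconsistency at $X$ itself gives $\varepsilon' \notin \Sigma_{r+1}(\Bc(n,k)_X)$.
\end{itemize}
Since $\Bc(n,k)_X$ has rank $r+1$, this is a strict jump at the top of its filtration.

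Now \zcref{prop:mixed-no-top-jump} shows that $\abs{\Tb(X)} \ge 2$ is impossible, so $\Tb(X) = \{T\}$ for a single $T$. Then $X = D_T$ and $\abs{T} - k = r+1$, i.e.\ $T \in \binom{[n]}{k+r+1}$, and $\Bc(n,k)_X = \Bc(T,k)$, which gives the strict inclusion for $\Bc(T,k)$.

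The only mildly delicate point is the bookkeeping identifying $\Bc(T,k)$ with the localization at $D_T$ and localizations of $\Bc(n,k)_X$ with localizations of $\Bc(n,k)$. Both follow from \zcref{thm:BB-poset} and the definition of the order on $P(n,k)$, so I expect no real obstacle.
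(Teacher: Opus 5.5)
Your proof is correct and follows the paper's argument. In the forward direction, \zcref{prop:mixed-no-top-jump} forces the witnessing localization $X$ of codimension $r+1$ to satisfy $\abs{\Tb(X)}=1$, so $X=D_T$ and $\Bc(n,k)_{D_T}=\Bc(T,k)$; the converse is \zcref{lem:extend} applied at $X=D_T$. You also spell out two steps the paper leaves implicit: restricting $\varepsilon$ to $\Bc(n,k)_X$, and identifying $\Bc(T,k)$ with that localization.
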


\section{Main result}\label{sec:main_result}
In this section, we prove the following main theorem.
\begin{thm}\label{thm:sigma-jump}
  The following holds:
  \begin{itemize}
    \item $\Sigma_2(\Bc(n,2)) = \Sigma_3(\Bc(n,2))$;
    \item if $k \ge 3$, then there exists a Manin--Schechtman arrangement $\Bc(n,k)$ such that $\Sigma_2(\Bc(n,k)) \supsetneq \Sigma_3(\Bc(n,k))$;
    \item if $n - k \ge 3$, $k \ge 2$, and $3\le p\le n-k-1$, then there exists a Manin--Schechtman arrangement $\Bc(n,k)$ such that $\Sigma_p(\Bc(n,k)) \supsetneq \Sigma_{p+1}(\Bc(n,k))$.
  \end{itemize}
\end{thm}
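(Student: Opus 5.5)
The plan is to reduce every item to a single localization via \zcref{cor:reduction-to-single-set}. Then each item becomes a statement about $\Bc(T,k)$ with $\abs{T} = k+r+1$; its essentialization has rank $r+1$ and is determined by $k+r+1$ Gale-dual vectors in $\Rb^{r+1}$.

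\textbf{First item.} Here $k=2$ and $r=2$. By \zcref{cor:reduction-to-single-set} it suffices to show $\Sigma_2(\Bc(5,2)) = \Sigma_3(\Bc(5,2))$ for the single case $\abs{T}=5$.
\begin{itemize}
  \item By \zcref{prop:same-homotopy-type} and the fact that $L \cong P(5,2)$ is fixed, the $\Sigma$-sets depend only on the oriented matroid. I would therefore fix one explicit real very generic realization, using Gale-dual vectors $\beta_1,\dots,\beta_5 \in \Rb^3$.
  \item The arrangement $\Bc(5,2)^{\mathrm{ess}}$ has ten planes $D_I$, $I\in\binom{[5]}{3}$. Its rank-two flats come from $P(5,2)$: either $\{T\}$ with $\abs{T}=4$ (a localization $\Bc(4,2)$ with four planes), or a pair $\{I,J\}$ with $\abs{I\cup J}=5$ (a Boolean pair).
  \item Take $\varepsilon\in\Sigma_2$ and suppose it is inconsistent globally. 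By \zcref{thm:gordan} there is a positive relation $\sum \lambda_I\varepsilon_I\alpha_I=0$. Using the explicit normal vectors $\alpha_I$, I would show that the support of any minimal such relation lies inside some $\binom{T}{3}$ with $\abs{T}=4$. That would contradict consistency at $D_T$.
  \item Equivalently, I would check that every minimal positive circuit of the realized vectors is contained in a rank-two flat, possibly up to a symmetry argument over the action of $S_5$ and sign changes.
\end{itemize}
This combinatorial verification (essentially that $\Bc(5,2)$ has no simple chamber and no rank-three "bad" circuit) is the main obstacle. I would first attempt a direct enumeration of circuits of the ten vectors in $\Rb^3$.

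\textbf{Second and third items.} By \zcref{cor:reduction-to-single-set} it suffices to find $T$ with $\abs{T}=k+p+1$ and $\Sigma_p(\Bc(T,k))\supsetneq\Sigma_{p+1}(\Bc(T,k))$, where $p\le n-k-1$ guarantees such a $T$ exists. By \zcref{prop:generic-chamber}, it is enough to exhibit a simple chamber in a realization of $\Bc(k+p+1,k)$ of rank $p+1$ that has the same lattice $P(k+p+1,k)$. The statement only asks for existence of some $\Bc(n,k)$, and the homotopy argument passes through Randell's theorem.

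To produce the simple chamber I would use \zcref{cor:perturbation-method}. Start from a non-very generic $\Ac'^0$ in which some rank-$p$ flat of $\Bc$ is the intersection of exactly $p+1$ hyperplanes, that is, a circuit-hyperplane. Then perturb to a very generic $\Ac^0$ so that this flat splits. The task is to choose a special subvariety in the parameter space of generic arrangements for which:
\begin{itemize}
  \item some $p+1$ of the $D_I$ become dependent (concurrent in codimension $p$), and
  \item no other lattice coincidence occurs, so a small generic perturbation lands in $\Oc(n,k)$.
\end{itemize}
For $p=2$, $k\ge3$, this is modeled on \zcref{ex:B63}: three planes $D_I$ whose index sets pairwise share $k$ elements, forced through a common line. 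For general $p\ge3$ I would impose a single determinantal condition on $\alpha_1,\dots,\alpha_n$ making a chosen $(p+1)$-set of $D_I$'s rank-deficient, and then argue its generic point has lattice $P$ except for that one coincidence. Verifying this genericity is the secondary difficulty.
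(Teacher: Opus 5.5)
Your reduction of the first item to the cleanliness of $\Bc(5,2)$ is right, but the mechanism you propose for that cleanliness is false, so enumerating circuits cannot close the argument. The ten normals $\alpha_I$ do admit minimal positive relations (for suitable signs) whose support lies in no $\binom{T}{3}$ with $\abs{T}=4$. For example, $\{123,124,135,245\}$ is a $4$-element circuit (any three of its vectors are independent) whose union is all of $[5]$. In fact, Carath\'eodory plus a multiplicity count (\zcref{prop:52-1}) shows that a minimal certificate for $\varepsilon\in\Sigma_2\setminus\Sigma_3$ must be supported in the $\Sfrk_5$-orbit of $\{123,124,135,245\}$, $\{123,124,125,345\}$ or $\{123,124,135,145\}$, and none of these lies in a rank-two flat. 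Likewise, ``no simple chamber'' only rules out the particular sign vectors $\varepsilon^C$ of \zcref{prop:generic-chamber}, not an arbitrary $\varepsilon\in\Sigma_2\setminus\Sigma_3$.

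The missing idea is sign propagation through the rank-two flats $D_T$, $\abs{T}=4$. Each is a copy of $\Bc(4,2)$, where any three normals satisfy a three-term relation with nonzero coefficients, so consistency at $D_T$ can force the sign of one normal from the signs of two others. Starting from the certificate signs on a $4$-circuit, you force signs off the circuit and come back with a contradiction. For $\{123,124,135,245\}$, the pair $\{124,245\}$ forces $\varepsilon_{125}$ in $D_{1245}$. Then $\{123,125\}$ forces, in $D_{1235}$, a value of $\varepsilon_{135}$ opposite to its certificate value, so no extension lies in $\Sigma_2$. For the first and third orbits this is strong circuit elimination. However, $\{123,124,125,345\}$ admits no such decomposition and needs a case split on the position of $s_{\sigma(1)}$ relative to $s_{\sigma(3)}<s_{\sigma(4)}<s_{\sigma(5)}$, using three localizations in each case (\zcref{prop:52-3}). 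Also, \zcref{prop:same-homotopy-type} says nothing about $\Sigma$-sets. Since every $5$-subset of an arbitrary $\Ac^0$ gives a localization $\Bc(T,2)$, you need cleanliness for every real generic $5$-line arrangement; the paper gets this by arguing with arbitrary distinct $s_i$ in $\alpha_i=(1,s_i)$.

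For the second and third items your strategy is the paper's: reduce, then obtain a simple chamber by perturbing a circuit-hyperplane of a non-very generic $\Bc(n,k,\Ac^0)$ via \zcref{cor:perturbation-method}. But the construction is the real content, and your $p=2$ model is wrong. Suppose three $(k+1)$-sets pairwise share $k$ elements. Either they have a common $k$-element core, so each $\alpha_{I_j}$ has a private nonzero coordinate and they can never become dependent. Or they lie in one $(k+2)$-set $S$, so their intersection is always $D_S$, which contains all $k+2>3$ hyperplanes $D_{S\setminus\{x\}}$. Neither gives a rank-two flat with exactly three hyperplanes. The correct triple (Falk's $\{1245,1346,2356\}$; in \zcref{lem:non-very-3}, $I_j=[k+3]\setminus P_j$ for three disjoint pairs $P_j$) has pairwise intersections of size $k-1$, which is exactly why $k\ge3$ is needed.

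For $p\ge3$ you must likewise exhibit a specific family. The paper uses the cyclic family $I^c_i=\{i,i+1,c(i)\}\sqcup\eta_n$, $I^c_{r-1}=\{r-1,1,c(r-1)\}\sqcup\eta_n$, $I^c_r=\{r,r+1,r+2\}\sqcup\eta_n$. Its pairwise intersections have size at most $k-1$ and it satisfies $\sum_{T}(\abs{T}-k)=\abs{\bigcup_T T}-k$. By \zcref{prop:hypersurface} the degeneracy is then a single explicit determinant, linear in $\alpha_1$ and hence with real generic zeros. One then checks that every proper subfamily stays independent and that no other $D_I$ contains the intersection. You do not need the rest of the lattice to be $P(n,k)$: \zcref{cor:perturbation-method} only uses this local condition at the one flat, and a very generic perturbation exists automatically by Zariski density.
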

As an immediate consequence of \zcref{thm:Yoshinaga}, we obtain:
\begin{cor}\label{cor:non-K(pi1)}
  Suppose that $n - k \ge 3$.
  If $k \ge 3$, then $\pi_p(M(\Bc(n,k))) \ne 0$ for $2 \le p \le n - k - 1$.
  If $k = 2$, then $\pi_p(M(\Bc(n,2))) \ne 0$ for $3 \le p \le n - 3$.
  In particular, $\Bc(n, k)$ is not $K(\pi,1)$ for $k\neq 1$ and $(n,k) \neq (5,2)$.
\end{cor}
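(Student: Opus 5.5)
The statement to prove is \zcref{cor:non-K(pi1)}. I will deduce it from \zcref{thm:sigma-jump} via the first item of \zcref{thm:Yoshinaga}, after reducing to the essential, real setting.

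First I reduce to an essential real arrangement. By \zcref{prop:same-homotopy-type}, the homotopy type of $M(\Bc(n,k))$ does not depend on the choice of very generic $\Ac^0$. So I may take $\Ac^0$ real and very generic, and $M(\Bc(n,k))$ is then the complement of the complexification of the real arrangement $\Bc(n,k)$. This arrangement is central but not essential: its center $D_{[n]}$ has dimension $k$. Hence $M(\Bc(n,k)) \cong M(\Bc(n,k)^{\mathrm{ess}}) \times \Cb^k$, and the two complements have the same homotopy groups. By \zcref{lem:sigma-essentialization}, $\Sigma_p(\Bc(n,k)^{\mathrm{ess}}) = \Sigma_p(\Bc(n,k))$ for all $p$. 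Therefore every strict inclusion in \zcref{thm:sigma-jump} transfers to the essential arrangement of rank $\ell = n-k$, where \zcref{thm:Yoshinaga} applies.

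Next I apply the theorems case by case, assuming $n-k \ge 3$.

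\emph{Case $k \ge 3$.} The second item of \zcref{thm:sigma-jump} gives $\Sigma_2 \supsetneq \Sigma_3$, hence $\pi_2 \ne 0$. The third item gives $\Sigma_p \supsetneq \Sigma_{p+1}$ for $3 \le p \le n-k-1$, hence $\pi_p \ne 0$ in that range. Together these give $\pi_p \ne 0$ for $2 \le p \le n-k-1$.

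\emph{Case $k = 2$.} The third item gives $\pi_p \ne 0$ for $3 \le p \le n-3$.

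One point needs checking: the phrase ``there exists a Manin--Schechtman arrangement'' in \zcref{thm:sigma-jump} should mean a realization of $\Bc(n,k)$ with very generic $\Ac^0$, as the paper's standing convention indicates. If a realization is non-very generic (as in \zcref{sec:non-very}), the homotopy transfer by \zcref{prop:same-homotopy-type} does not apply, and I would need lattice-isotopy to some very generic realization instead. I expect this to be the only delicate step, and I would handle it by invoking the paper's convention for $\Bc(n,k)$.

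Finally, the non-$K(\pi,1)$ statement. Take $k \ne 1$, and read $k \ge 2$ together with the standing hypothesis $n-k \ge 3$.

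If $k \ge 3$, then $n-k-1 \ge 2$, so $\pi_2 \ne 0$.

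If $k = 2$ and $(n,k) \ne (5,2)$, then $n \ge 6$, so $n-3 \ge 3$ and $\pi_3 \ne 0$.

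In both cases some higher homotopy group is nonzero, so $M(\Bc(n,k))$ is not $K(\pi,1)$.
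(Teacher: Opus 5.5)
Your proposal is correct and takes the same route as the paper, which presents this corollary as an immediate consequence of \zcref{thm:Yoshinaga} applied to the strict inclusions of \zcref{thm:sigma-jump}. The details you make explicit are handled correctly and are exactly what the paper leaves implicit: passing to the essentialization via \zcref{lem:sigma-essentialization} and $M(\Bc(n,k)) \cong M(\Bc(n,k)^{\mathrm{ess}}) \times \Cb^k$, and using \zcref{prop:same-homotopy-type} (the jumping realization is very generic, being obtained by perturbation under the standing convention) so the conclusion does not depend on the realization.
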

To prove \zcref{thm:sigma-jump}, we need the following three lemmas.
\begin{lem}\label{lem:sigma-52}
  $\Bc(5,2)$ is clean, i.e., $\Sigma_2 \bigl( \Bc(5,2) \bigr) = \Sigma_3 \bigl( \Bc(5,2) \bigr)$.
\end{lem}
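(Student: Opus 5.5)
We plan to prove the lemma by an explicit, finite verification on one concrete realization of $\Bc(5,2)$, reduced by symmetry. By \zcref{lem:sigma-essentialization} we may replace $\Bc(5,2)$ by its essentialization. By the Gale-duality remark, the essentialization is the rank-three arrangement in $\Rb^3$ whose hyperplanes are the planes $\langle \beta_i,\beta_j\rangle$ for $\{i,j\}\in\binom{[5]}{2}$. Here the plane spanned by $\beta_i,\beta_j$ corresponds to $D_I$ with $I=[5]\setminus\{i,j\}$. Projectively, this is the arrangement of the $10$ lines joining $5$ points in general position in $\mathbb{RP}^2$.

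By \zcref{thm:BB-poset}, the rank-two flats are of two kinds.
\begin{itemize}
\item The $5$ lines $\Rb\beta_i$, corresponding to $\Tb=\{[5]\setminus\{i\}\}$. Each lies on exactly the $4$ planes $\langle\beta_i,\beta_j\rangle$, $j\neq i$.
\item The $15$ double points, corresponding to $\Tb=\{T_1,T_2\}$ with $\abs{T_1\cap T_2}=1$.
\end{itemize}
Consistency at a double point is automatic, since the two forms are independent. So $\varepsilon\in\Sigma_2$ iff, for every $i$, the restriction of $\varepsilon$ to the $4$ planes through $\Rb\beta_i$ is realized by one of the $8$ chambers of that rank-two pencil. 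These are exactly the $4$-tuples that do not alternate in cyclic order around $\beta_i$.

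We will work with a convenient real very generic model. A natural choice is the regular pentagon $\beta_j=(\cos(2\pi j/5),\sin(2\pi j/5),1)$, after checking that its only multiple points are the $5$ quadruple points. This matters because in the regular pentagon, diagonals are parallel to sides and meet on the line at infinity; we must check that no three lines are concurrent there. If a triple point does occur, we instead take a small generic perturbation of the pentagon, which is also allowed by \zcref{thm:BB-poset}.

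The set $\Sigma_3$ consists of the sign vectors of chambers, so it suffices to show $\abs{\Sigma_2}=\abs{\Sigma_3}$. The number of chambers follows from Zaslavsky's theorem: the rank-two Möbius values are $3$ at each quadruple point and $1$ at each double point, giving
\begin{equation*}
  \chi(t)=t^3-10t^2+30t-21
\end{equation*}
and $\abs{\chi(-1)}=62$ chambers.

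The remaining task is to show that exactly $62$ of the $2^{10}$ sign vectors satisfy the five local non-alternation conditions. The difficulty is that the conditions overlap: each plane $\langle\beta_i,\beta_j\rangle$ belongs to the pencils at both $\beta_i$ and $\beta_j$. To organize the count we use the symmetries of the configuration. These are the dihedral group of the pentagon together with the global sign change $\varepsilon\mapsto-\varepsilon$, which preserves both $\Sigma_2$ and $\Sigma_3$.

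As a more conceptual alternative to counting, we will argue by contradiction using \zcref{thm:gordan}.
\begin{enumerate}
\item Suppose $\varepsilon\in\Sigma_2\setminus\Sigma_3$. Take a Gordan certificate $\sum\lambda_I\varepsilon_I\alpha_I=0$ with minimal support $S$.
\item Minimality forces $S$ to be a circuit of the underlying matroid. Since $\varepsilon\in\Sigma_2$, $S$ is not contained in a rank-two flat, so $S$ consists of $4$ planes in general position.
\item The triples of planes meeting at a common point are exactly the triples through some $\beta_i$. Hence $S$ contains no three planes through a common $\beta_i$.
\item Up to symmetry, this leaves a handful of shapes for $S$ as $4$-edge subgraphs of $K_5$ with maximum degree $\le2$: the $4$-cycle, the path $P_5$, and $P_4\sqcup K_2$, among others.
\item For each shape, the signs of a positive relation on $S$ are determined by the oriented matroid, up to the global sign.
\item We then check that the induced partial sign pattern cannot be completed to a vector that is non-alternating at every $\beta_i$. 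This step uses the planes outside $S$ through the relevant $\beta_i$.
\end{enumerate}

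We expect the main obstacle to be this last completion step. Propagating the non-alternation constraints around the pentagon has to be done carefully, and the sign computations depend on the concrete oriented matroid. If the case analysis becomes unwieldy, we fall back on the direct enumeration of the $2^{10}$ sign vectors, reduced modulo the symmetry group, and compare the count with $62$.
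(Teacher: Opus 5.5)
Your reductions are correct, and route (B) is the paper's strategy. The Gale-dual picture, the rank-two flats (five quadruple points, fifteen double points), the count $\abs{\Sigma_3}=62$, and the reduction of a minimal Gordan certificate to a $4$-circuit together amount to \zcref{prop:52-1}; such a circuit is a $4$-edge subgraph of $K_5$ of maximum degree at most two. But step (4) is wrong as written. $P_4\sqcup K_2$ has six vertices, so it cannot occur in $K_5$. The degree sequences $(2,2,2,2,0)$ and $(2,2,2,1,1)$ leave exactly three shapes: $C_4$, $P_5$ and $K_3\sqcup K_2$. Under the correspondence of an edge $\{i,j\}$ with the triple $[5]\setminus\{i,j\}$, these are the paper's orbits of $\{123,124,135,145\}$, $\{123,124,135,245\}$ and $\{123,124,125,345\}$. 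The shape missing from your list, $K_3\sqcup K_2$, is the hard one. Also, treating one representative per shape needs more than dihedral symmetry. You must either use that every relabeling of the five points is a reorientation (there is a single reorientation class), or carry the permutation $\sigma$ through the sign computations, as the paper does.

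The real gap is step (6), which is the whole content of the lemma and is only announced. For $P_5$ and $C_4$, the missing idea is a chord $h$ joining two non-adjacent degree-two vertices of the circuit, e.g.\ $h=34$ for the path $1$--$3$--$5$--$4$--$2$. At each endpoint of $h$, the edge $h$ and the two circuit edges there form a $3$-circuit; here these sit at $\beta_3$ and $\beta_4$. The $4$-circuit is the elimination of these two $3$-circuits at $h$. Consistency at $\beta_3$ and at $\beta_4$ then rules out opposite values of $\varepsilon_h$, so no value survives. For $K_3\sqcup K_2$ the degree-two vertices are pairwise adjacent, so no such chord exists. There the paper (\zcref{prop:52-3}) forces two further signs at two quadruple points and reaches a contradiction at a third, splitting into four cases according to where $s_{\sigma(1)}$ lies among $s_{\sigma(3)}<s_{\sigma(4)}<s_{\sigma(5)}$. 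Nothing in your plan supplies this.

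Your counting route (A) is a genuinely different and viable strategy, but it is equally unexecuted. It can be closed without brute force. For $\alpha_i=(1,s_i)$ and $a<b<c$ one has $\alpha_{abc}(t)=\det\bigl((1,s_a,t_a),(1,s_b,t_b),(1,s_c,t_c)\bigr)$. Consistency at $D_{abcd}$ then says exactly that the signs on $abc,abd,acd,bcd$, in this order, change at most once. Identifying $\varepsilon$ with $\{I\mid\varepsilon_I=-\}$, $\Sigma_2$ becomes the set of Manin--Schechtman consistent subsets of $\binom{[5]}{3}$, i.e.\ the higher Bruhat order $B(5,2)$. This has $62$ elements (commutation classes of reduced words of the longest element of $\Sfrk_5$), matching your chamber count. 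As submitted, though, neither route proves the lemma.
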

\begin{lem}\label{lem:non-very-3}
  If $n - k = 3$ and $k \ge 3$, then there is a generic but non-very generic arrangement $\Ac^0$ of rank $k$ with $n$ hyperplanes, such that there are three distinct hyperplanes $D_{I_1}, D_{I_2}, D_{I_3} \in \Bc(n,k,\Ac^0)$ such that
  \begin{equation*}
    \codim \left( D_{I_1} \cap D_{I_2} \cap D_{I_3} \right) = 2 .
  \end{equation*}
  Moreover, the following holds: $\Bc(n,k,\Ac^0)_{D_{I_1} \cap D_{I_2} \cap D_{I_3}}=\{D_{I_1},D_{I_2},D_{I_3}\}$.
\end{lem}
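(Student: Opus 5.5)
The plan is to work entirely on the Gale-dual side, where $\Bc(n,k,\Ac^0)$ with $n-k=3$ becomes the arrangement of planes in $\Rb^3$ spanned by pairs of $n$ vectors. In that picture the required configuration is just three concurrent lines in the projective plane, which is easy to build by hand.

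\textbf{Step 1: translate to the dual picture.} By the remark following the definition of $\Bc(n,k,\Ac^0)$, the essentialization of $\Bc(n,k,\Ac^0)$ is linearly equivalent to the arrangement in $\Sb(\Ac^0)/D_{[n]}\cong\Rb^{3}$ determined by a Gale dual $\beta_1,\dots,\beta_n$ of $\alpha_1,\dots,\alpha_n$. I will make the dictionary explicit using the formula $D_T=D_{[n]}+\Sb(\{H_i\mid i\notin T\})$, applied to a single hyperplane $D_I$ with $\abs{I}=k+1$. The complement $[n]\setminus I=\{j,l\}$ has exactly $n-k-1=2$ elements. Hence the image of $D_I$ is the plane $\langle\beta_j,\beta_l\rangle$.

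After projectivizing, $\Bc(n,k,\Ac^0)^{\mathrm{ess}}$ becomes the arrangement of the $\binom n2$ lines $\overline{p_jp_l}$ through pairs of $n$ points $p_1,\dots,p_n\in\mathbb{P}^2$. Standard Gale duality says that any $k$ of the $\alpha_i$ are independent if and only if any $n-k=3$ of the $\beta_i$ are independent. So genericity of $\Ac^0$ is equivalent to the points $p_i$ being in general position, i.e.\ no three collinear. Conversely, any such point configuration lifts to a generic $\Ac^0$, by taking a Gale dual again. By \zcref{lem:sigma-essentialization}, and since localizations and codimensions are preserved under essentialization, it suffices to construct the points.

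\textbf{Step 2: construct the points.} Since $k\ge3$ we have $n=k+3\ge6$, so six points are available. Fix a point $q$ and three distinct lines $L_1,L_2,L_3$ through $q$.
\begin{itemize}
\item Choose $p_1,p_2\in L_1$, $p_3,p_4\in L_2$ and $p_5,p_6\in L_3$, all different from $q$.
\item Choose these points generically on their lines, so that no three of $p_1,\dots,p_6$ are collinear. This is an open dense condition: the only collinear pairs forced by the construction are $\{p_1,p_2\}$, $\{p_3,p_4\}$ and $\{p_5,p_6\}$.
\item Choose $p_7,\dots,p_n$ generically: off every line through two earlier points, and off every line through $q$ and an earlier point.
\item Additionally, perturb within the lines $L_i$ so that no line $\overline{p_jp_l}$ with $\{j,l\}\notin\{12,34,56\}$ passes through $q$. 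This is again finitely many proper closed conditions.
\end{itemize}
Set $I_1=[n]\setminus\{1,2\}$, $I_2=[n]\setminus\{3,4\}$ and $I_3=[n]\setminus\{5,6\}$. Then $D_{I_1}\cap D_{I_2}\cap D_{I_3}$ corresponds to the point $q$, i.e.\ to a line in $\Rb^3$, so it has codimension $2$. Moreover, the lines of the arrangement through $q$ are exactly $L_1,L_2,L_3$. This gives the localization statement $\Bc(n,k,\Ac^0)_{D_{I_1}\cap D_{I_2}\cap D_{I_3}}=\{D_{I_1},D_{I_2},D_{I_3}\}$.

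\textbf{Step 3: non-very-genericity.} I will show that no rank-$2$ element of $P(n,k)$ has a localization with exactly three hyperplanes. By \zcref{thm:BB-poset}, the rank-$2$ elements of $P(n,k)$ are of two kinds.
\begin{itemize}
\item $\Tb=\{T\}$ with $\abs{T}=k+2$. Its localization is $\Bc(T,k)$, which has $k+2\ge5$ hyperplanes.
\item $\Tb=\{T_1,T_2\}$ with $\abs{T_i}=k+1$. Its localization consists of just the two hyperplanes $D_{T_1},D_{T_2}$.
\end{itemize}
Hence $L(\Bc(n,k,\Ac^0))\not\cong P(n,k)$, so $\Ac^0$ is non-very generic.

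The main obstacle is Step 1: getting the Gale-dual dictionary exactly right. That means checking that $D_I$ corresponds to the span of the two complementary $\beta$'s, and that genericity of $\Ac^0$ corresponds to general position of the $\beta_i$. Once that is settled, Steps 2 and 3 are routine genericity arguments.
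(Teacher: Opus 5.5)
Your proof is correct. It arrives at the same configuration as the paper ($I_1=[n]\setminus\{1,2\}$, $I_2=[n]\setminus\{3,4\}$, $I_3=[n]\setminus\{5,6\}$), but by a genuinely different route.

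\textbf{The paper's route.} The paper stays on the primal side. It applies \zcref{prop:hypersurface} to $\Tb_n=\{I_1,I_2,I_3\}$ and computes the locus $V^\circ_{(\Tb_n,2)}\subset\mathrm{Gr}_k^\circ(\Cb^n)$ as the hypersurface $\Delta_{125\eta_n}\Delta_{346\eta_n}-\Delta_{126\eta_n}\Delta_{345\eta_n}=0$. It then uses linearity in $\alpha_1$ to find a real generic point on it. Finally, it asserts briefly that containing an extra $D_I$ is a proper closed condition on that hypersurface.

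\textbf{Your route.} You pass through the Gale-dual description of $\Bc(n,k,\Ac^0)^{\mathrm{ess}}$ that the paper already records. There each Pl\"ucker coordinate $\Delta_S$ of the $\alpha$'s equals, up to sign and a common scalar, the complementary $3\times 3$ minor of the $\beta$'s. So the paper's equation becomes, up to a nonzero factor, the classical concurrency condition $[125][346]-[126][345]=0$ for the lines $\overline{p_1p_2}$, $\overline{p_3p_4}$, $\overline{p_5p_6}$. You then build such a real configuration by hand.

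\textbf{What each approach buys.} Your route makes several things elementary and fully checked: the existence of a real generic point, the codimension count, and above all the localization condition, which the paper only asserts. You also make non-very-genericity explicit, via the fact that rank-two elements of $P(n,k)$ lie above either $2$ or $k+2\ge 5$ atoms. The paper's route gives an explicit equation for the whole non-very-generic locus in the parameter space. That is uniform with the proof of \zcref{lem:non-very-2} and feeds the matroid-strata discussion in \zcref{subsec:matroid_strata}.

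\textbf{Minor points.}
\begin{itemize}
\item Your fourth bullet is redundant. For $j,l\in[6]$ from different pairs, $\overline{p_jp_l}$ automatically misses $q$, since $L_i\cap L_{i'}=\{q\}$ and no $p_j$ equals $q$. Pairs involving an index $\ge 7$ are already handled by your third bullet.
\item The appeal to \zcref{lem:sigma-essentialization} in Step 1 is beside the point. All you use is that codimensions and localizations are unchanged on passing to the quotient by the center $D_{[n]}$.
\end{itemize}
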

\begin{lem}\label{lem:non-very-2}
  If $r = n - k \ge 4$ and $k \ge 2$, then there is a generic but non-very generic arrangement $\Ac^0$ of rank $k$ with $n$ hyperplanes, such that there are $r$ distinct hyperplanes $D_{I_1}, \dots, D_{I_r} \in \Bc(n,k,\Ac^0)$ such that
  \begin{itemize}
    \item $\codim \bigcap_{j \in J} D_{I_j} = \abs{J}$ for any $J \subsetneq [r]$;
    \item $\codim \bigcap_{j=1}^r D_{I_j} = r - 1$;
    \item $\Bc(n,k,\Ac^0)_{\bigcap_{j=1}^r D_{I_j}}=\{D_{I_1},\dots,D_{I_r}\}$.
  \end{itemize}
\end{lem}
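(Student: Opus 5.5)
The plan is to work throughout with the Gale-dual model from the Remark after the definition of $\Bc(n,k,\Ac^0)$. After essentialization, $\Bc(n,k,\Ac^0)$ is the arrangement in $\Cb^r$, $r=n-k$, spanned by a Gale dual $\beta_1,\dots,\beta_n$ of $\alpha_1,\dots,\alpha_n$. Since $D_T = D_{[n]}+\Sb(\{H_i \mid i\notin T\})$, the hyperplane $D_I$ becomes $H_J=\operatorname{span}(\beta_j \mid j\in J)$ with $J=[n]\setminus I$ and $\abs{J}=r-1$. Genericity of $\Ac^0$ is equivalent to the $\beta_j$ being in general position in $\Cb^r$, and by \zcref{lem:sigma-essentialization} nothing is lost in passing to the essentialization. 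So the task becomes: find $n$ points in general position in $\Cb^r$ (real ones would be most convenient later) and $(r-1)$-subsets $J_1,\dots,J_r$ such that:
\begin{itemize}
  \item the $H_{J_i}$ all contain a common line $\ell=\Cb v$;
  \item any $r-1$ of them intersect exactly in $\ell$;
  \item no other $H_J$ contains $\ell$.
\end{itemize}
These translate directly into the three bullet points of the lemma. Non-very-genericity is then automatic. The $r$ hyperplanes of the circuit cannot all lie in one factor $\Bc(T,k)$ of \zcref{prop:localization-direct-sum} for a $T$ of the right size, and a mixed $\Tb(X)$ gives a direct sum, in which a circuit of hyperplanes of this kind (all proper subsets independent) must live in a single factor. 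Either way the configuration contradicts the rank function of \zcref{thm:BB-poset}.

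The key reduction is to project along $v$. Let $\pi\colon \Cb^r\to \Cb^r/\Cb v\cong\Cb^{r-1}$ and $\gamma_j=\pi(\beta_j)$. Then $v\in H_J$ if and only if the $r-1$ vectors $\gamma_j$, $j\in J$, are linearly dependent. Conversely, any configuration $\gamma_1,\dots,\gamma_n\in\Cb^{r-1}$ lifts to $\beta_j=\tilde\gamma_j+t_jv$ with the $t_j$ free. For generic $t_j$ the lifted $\beta$'s are in general position in $\Cb^r$, since that is a nonempty Zariski-open condition on $(t_j)$ once the $\gamma$'s span $\Cb^{r-1}$. So I would construct $n$ vectors in $\Cb^{r-1}$ (points of $\Pb^{r-2}$) satisfying:
\begin{itemize}
  \item $\gamma_{J_i}$ is dependent for exactly the chosen $i=1,\dots,r$, and for no other $(r-1)$-subset $J$;
  \item the $r$ hyperplanes $\operatorname{span}(\gamma_{J_i})\subset\Cb^{r-1}$ are such that any $r-1$ of their normals are linearly independent.
\end{itemize}
The second condition is exactly the statement that $\{H_{J_i}\}$ has codimension $\abs{J}$ for proper subfamilies and $r-1$ in total. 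The total codimension cannot be $r$, because all normals lie in $v^\perp$.

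For the construction I would choose the $J_i$ with small pairwise overlaps, for instance cyclic windows in $[n]$; this needs $n\ge r+2$, which holds since $k\ge2$. I would then impose the $r$ dependency conditions one at a time. Each condition "$\gamma_{J_i}$ is dependent" says one point of $J_i$ lies in the hyperplane spanned by $r-2$ of the others. This can be achieved by placing a designated point of each $J_i$ last, on a prescribed generic hyperplane, which is possible as long as the designated points are distinct and the hyperplanes are chosen generically. All the remaining requirements are nonvanishing of finitely many determinants on the irreducible variety of such configurations:
\begin{itemize}
  \item no further dependencies among the $\gamma$'s;
  \item independence of any $r-1$ of the normals;
  \item general position of the lifts.
\end{itemize}
So it suffices to exhibit one witness for each requirement, or an explicit example, and then intersect the resulting open sets. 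Here $r\ge4$ is used: it gives enough room in $\Pb^{r-2}$ for the prescribed incidences not to force additional ones.

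The step I expect to be the main obstacle is the choice of $J_1,\dots,J_r$ and the verification of the last condition of the lemma, $\Bc(n,k,\Ac^0)_{\bigcap D_{I_j}}=\{D_{I_1},\dots,D_{I_r}\}$. Imposing $r$ incidences in $\Pb^{r-2}$ can easily force an unwanted extra dependent $(r-1)$-subset via a Desargues- or Pappus-type closure phenomenon. It can also force the normals to satisfy a dependency that is too strong. To rule these out I would first try a concrete real configuration (e.g. points on a rational normal curve, perturbed so that only the chosen subsets become dependent). I would then check non-vanishing of the relevant determinants by an inductive argument on $r$: adding one point and one window $J_{r+1}$ keeps the earlier conditions and introduces the new ones generically.
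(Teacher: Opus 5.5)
Your Gale-dual reformulation is correct. The three bullets do become ``$H_{J_1},\dots,H_{J_r}$ contain a common line $\ell=\Cb v$, any $r-1$ of them meet exactly in $\ell$, and no other $H_J$ contains $\ell$'', and projecting from $v$ does turn these incidences into dependencies of the $\gamma_{J_i}$. But the lemma is an existence statement, and you never carry out the existence step. Both concrete mechanisms you suggest for it fail.

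First, cyclic windows are inadmissible. Two consecutive windows have $\abs{J_1\cap J_2}=r-2$ and $\abs{J_1\cup J_2}=r$, so by general position $H_{J_1}\cap H_{J_2}=\operatorname{span}(\beta_{J_1\cap J_2})$. Hence $v$ lies in this span, i.e.\ $\gamma_{J_1\cap J_2}$ is already dependent. Then all $k+2$ hyperplanes $H_{(J_1\cap J_2)\cup\{x\}}$ contain $\ell$ and pairwise meet in codimension two, which contradicts the first or the third bullet. You need $\abs{J_i\cap J_j}\le r-3$, i.e.\ $\abs{I_i\cap I_j}\le k-1$, which is condition (Q2). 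For $k=2$, no $r$ of the $r+2$ cyclic windows satisfy this.

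Second, the ``designated point placed last'' mechanism needs an acyclic ordering, and for $k=2$ none exists for any admissible family. Under (Q2) the complements $[r+2]\setminus J_i$ are triples meeting pairwise in at most one point. An index lying in at most one $J_i$ would lie in $r-1$ of these triples, which requires $2r-1\le r+2$ points. So for $r\ge 4$ every index lies in at least two windows. The last-placed designated point $d_i$ therefore lies in some other $J_j$, whose designated point $d_j$ must come after it. The $r$ incidences must be solved simultaneously. Everything you flag as the main obstacle is then left open: no extra dependent $(r-1)$-subsets, independence of any $r-1$ normals, and reality of the solution. The dimension count $(r-2)k\ge r$ only predicts the expected dimension; it does not show that these open conditions are non-empty on the solution set.

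The missing idea, which is how the paper argues, is that concurrency is a single determinantal equation, not $r$ of them. The paper takes an explicit (Q2)-family $\Tb_c$ with union $[n]$: the cycle $\{i,i+1,c(i)\}\sqcup\eta_n$, closed by $\{r-1,1,c(r-1)\}\sqcup\eta_n$, together with $\{r,r+1,r+2\}\sqcup\eta_n$. The $r$ normals then lie in the $r$-dimensional space $D_{[n]}^\perp$, and the condition is $\det A_{\Tb_c}|_{[r]}=0$. This determinant is $\Delta_{r+1\,r+2\,\eta_n}$ times a cyclic bidiagonal determinant. That factor is linear in $\alpha_1$, not identically zero by Bayer--Brandt--Athanasiadis, and not proportional to a Pl\"ucker coordinate, so it has real generic zeros. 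Deleting any row $i\le r-1$ leaves a triangular minor, and the parity-dependent choice of $c$ handles deleting the last row. Extra containments $\alpha_I\in\operatorname{span}(\alpha_{I^c_j})$ are avoided on a Zariski-open subset.

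If you prefer to stay in your $\gamma$-model, a variant that can work is to prescribe $r$ generic hyperplanes $P_1,\dots,P_r\subset\Cb^{r-1}$ first. Then place each $\gamma_x$ generically in $\bigcap_{i:\,x\in J_i}P_i$. Normal independence is then automatic, and for $r=4$ this is the complete quadrilateral. You would still need an admissible family with every index in at most $r-2$ windows. You would also need an explicit check that no extra dependencies are forced; for instance, the points $\bigcap_{i\ne a,b}P_i$, $\bigcap_{i\ne a,c}P_i$, $\bigcap_{i\ne b,c}P_i$ are always collinear.
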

The proof of \zcref{lem:sigma-52} is given in \zcref{sec:pfoflem41}, and the proofs of \zcref{lem:non-very-3} and \zcref{lem:non-very-2} are given in \zcref{sec:non-very}.
\begin{proof}[Proof of \zcref{thm:sigma-jump}]
  By \zcref{cor:reduction-to-single-set}, it suffices to show
  \begin{align}
     &                                 & \Sigma_2 \bigl( \Bc(5,2) \bigr)       & = \Sigma_3 \bigl( \Bc(5,2) \bigr),           & \label{eq:non-sigma-jump-2}                 \\
     & \text{and for some }\Bc(k+r,k), & \nonumber                                                                                                                          \\
     &                                 & \Sigma_2 \bigl( \Bc(k+r,k) \bigr)     & \supsetneq \Sigma_3 \bigl( \Bc(k+r,k) \bigr) & (r=3, k \ge 3), \label{eq:sigma-jump-3}     \\
     &                                 & \Sigma_{r-1} \bigl( \Bc(k+r,k) \bigr) & \supsetneq \Sigma_r \bigl( \Bc(k+r,k) \bigr) & (r \ge 4, k \ge 2). \label{eq:sigma-jump-2}
  \end{align}
  Equation~\eqref{eq:non-sigma-jump-2} follows immediately from \zcref{lem:sigma-52}.

  Since the set of very generic arrangements is Zariski open, a small perturbation of a non-very generic arrangement yields a very generic arrangement.
  In particular, since Manin--Schechtman arrangements vary continuously with the underlying arrangement, $\Bc(n,k)$ can be obtained by perturbing a Manin--Schechtman arrangement associated with a non-very generic arrangement.
  By \zcref{lem:non-very-3}, there exists $\Bc(n,k,\Ac^0)$ associated with a non-very generic arrangement, which has $X \in L_2(\Bc(n,k,\Ac^0))$ with $\abs{\Bc(n,k,\Ac^0)_X} = 3$.
  Furthermore, there are no intersections of rank two with multiplicity three in any $\Bc(n,k)$ by \zcref{thm:BB-poset}.
  Thus, the assumption of \zcref{cor:perturbation-method} holds, and this implies \eqref{eq:sigma-jump-3}.

  Similarly, \eqref{eq:sigma-jump-2} follows from \zcref{cor:perturbation-method}, \zcref{thm:BB-poset}, and \zcref{lem:non-very-2}.
\end{proof}

\section{Cleanliness of $\Bc(5,2)$}\label{sec:pfoflem41}
As discussed in \cite{DSS26}, every generic arrangement of five lines in $\Rb^2$ is very generic.
Since there is exactly one reorientation class of uniform rank three oriented matroids on five elements, the oriented matroid of $\Bc(5,2)$, which is the arrangement spanned by these five points, is also unique up to reorientation.
\zcref{fig:MS52} shows the projectivized arrangement $\Bc(5,2)$, where each line is labeled by the corresponding triple $I \in \binom{[5]}{3}$.
\begin{figure}[h]
  \centering
  \begin{tikzpicture}[scale=1]
    \coordinate (0) at (18:1);
    \coordinate (1) at (90:1);
    \coordinate (2) at (162:1);
    \coordinate (3) at (234:1);
    \coordinate (4) at (306:1);
    \draw ($(1) !3! (0)$) -- ($(0) !3! (1)$) node[pos=1.1] {$234$};
    \draw ($(1) !3! (2)$) -- ($(2) !3! (1)$) node[pos=1.1] {$345$};
    \draw ($(2) !3! (3)$) -- ($(3) !3! (2)$) node[pos=1.1] {$145$};
    \draw ($(3) !3! (4)$) -- ($(4) !3! (3)$) node[pos=1.1] {$125$};
    \draw ($(0) !3! (4)$) -- ($(4) !3! (0)$) node[pos=1.1] {$123$};
    \draw ($(0) !2! (2)$) -- ($(2) !2! (0)$) node[pos=1.1] {$134$};
    \draw ($(0) !2! (3)$) -- ($(3) !2! (0)$) node[pos=1.1] {$124$};
    \draw ($(1) !2! (3)$) -- ($(3) !2! (1)$) node[pos=1.1] {$245$};
    \draw ($(1) !2! (4)$) -- ($(4) !2! (1)$) node[pos=1.1] {$235$};
    \draw ($(2) !2! (4)$) -- ($(4) !2! (2)$) node[pos=1.1] {$135$};
  \end{tikzpicture}
  \caption{The projectivization of $\Bc(5,2)$.}
  \label{fig:MS52}
\end{figure}

For $\sigma \in \Sfrk_5$ and $I \subset [5]$, write $\sigma(I) = \{ \sigma(i) \mid i \in I \}$.
This induces an action on $\binom{[5]}{3}$.
Throughout this section, we write $\Sigma_2 = \Sigma_2(\Bc(5, 2))$, $\Sigma_3 = \Sigma_3(\Bc(5, 2))$, and $\varepsilon_I = \varepsilon_{D_I} \in \{ \pm \}$ for $I \in \binom{[5]}{3}$.

We now classify the possible minimal supports of Gordan certificates arising from sign vectors in $\Sigma_2 \setminus \Sigma_3$ for $\Bc(5,2)$.
Since $\Bc(5,2)$ has rank three, every such certificate may be reduced to one supported on four hyperplanes.
\begin{prop}\label{prop:52-1}
  Let $\varepsilon \in \{ \pm \}^{\binom{[5]}{3}}$.
  If $\varepsilon \in \Sigma_2 \setminus \Sigma_3$, then there exists a subset $\Ib \subset \binom{[5]}{3}$ with $\abs{\Ib} = 4$ and positive coefficients $\lambda_I > 0$ for $I \in \Ib$ such that
  \begin{equation*}
    \sum_{I\in\Ib}\lambda_I \varepsilon_I \alpha_I = 0.
  \end{equation*}
  Moreover $\Ib$ belongs to one of the following $\mathfrak{S}_5$-orbits:
  \begin{equation*}
    \Sfrk_5 \cdot \{123,124,135,245\}, \quad \Sfrk_5 \cdot \{123,124,125,345\}, \quad \Sfrk_5 \cdot \{123,124,135,145\} .
  \end{equation*}
\end{prop}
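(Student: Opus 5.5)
Since $\varepsilon\notin\Sigma_3$ and $\Bc(5,2)$ has rank three, the whole system $\Hc(\varepsilon)$ is inconsistent at the center. \zcref{thm:gordan} applied to $B_{D_{[5]}}(\varepsilon)$ then gives a Gordan certificate $\sum_{I}\lambda_I\varepsilon_I\alpha_I=0$ with $\lambda\ge0$, $\lambda\neq0$. I will take a certificate whose support $\Ib$ is minimal with respect to inclusion.

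\textbf{Step 1: $\abs{\Ib}=4$.} A minimal support is a circuit of the matroid of the vectors $\alpha_I$. In the essentialization these vectors live in a $3$-dimensional space, so $\abs{\Ib}\le4$. To rule out $\abs{\Ib}\le3$, suppose the vectors $\{\alpha_I\}_{I\in\Ib}$ span a space of dimension at most $2$. Then the hyperplanes $D_I$, $I\in\Ib$, all contain a common flat $X$ of codimension at most $2$, and $\Ib\subset\Bc(5,2)_X$. The certificate restricts to a positive relation among the vectors of $\Bc(5,2)_X$, so by \zcref{thm:gordan} $\Hc(\varepsilon)$ is inconsistent at $X$. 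This contradicts $\varepsilon\in\Sigma_2$. Hence $\abs{\Ib}=4$, and the four vectors span the whole rank-$3$ space with every three of them independent.

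\textbf{Step 2: which $4$-sets can occur.} A $4$-subset $\Ib\subset\binom{[5]}{3}$ supports a circuit exactly when no three of its members meet in a common codimension-$2$ flat, i.e.\ no three are dependent. By \zcref{thm:BB-poset}, the rank-two flats of $\Bc(5,2)$ with more than two hyperplanes are the $D_T$ with $\abs{T}=4$; each contains the four triples inside $T$. The remaining rank-two flats are pairs $\{I,J\}$ with $\abs{I\cup J}=5$. So the condition is that no three members of $\Ib$ lie in a common $4$-set $T$.

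I will enumerate the $4$-subsets of the ten triples that satisfy this condition, up to the action of $\Sfrk_5$, which preserves the combinatorics. To organize the case analysis, I will count element multiplicities: each triple is its complementary pair, so $\Ib$ corresponds to four edges of $K_5$. Three triples lie in a common $T$ exactly when their complementary edges all contain the single point $[5]\setminus T$, i.e.\ form a star $K_{1,3}$. So I need $4$-edge graphs on $5$ vertices with maximum degree at most $2$. These are:
\begin{itemize}
\item the path $P_5$;
\item the $4$-cycle $C_4$;
\item a triangle plus a disjoint edge;
\item a $P_4$ plus a disjoint edge ($3$ edges on $4$ vertices plus an edge would need $6$ vertices, so this one is impossible);
\item two disjoint $P_3$'s (impossible on $5$ vertices).
\end{itemize}
This leaves three isomorphism types, $P_5$, $C_4$ and $C_3\sqcup K_2$, matching the three orbits in the statement. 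I will check the matching by computing complements. For $\{123,124,135,245\}$ the complementary edges are $45,35,24,13$, forming the path $1\!-\!3\!-\!5\!-\!4\!-\!2$, which is $P_5$. The second listed orbit should give $C_3\sqcup K_2$ and the third $C_4$, and I will verify both the same way.

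\textbf{Main obstacle.} The delicate point is Step 1: the reduction from a general Gordan certificate to one with minimal support, and the argument that a support of size at most $3$ necessarily lies in some rank-two localization. Once that is settled, the rest is a finite combinatorial check. The statement only requires that $\Ib$ lie in one of the three orbits, not that every such $\Ib$ actually occurs, so no realization or sign-pattern analysis is needed at this stage.
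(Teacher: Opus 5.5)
Your proof is correct, and it arrives at the same combinatorial condition as the paper by a different mechanism.

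\textbf{The paper's route.} The paper applies Carath\'eodory's theorem to get a support of size at most four, and then argues coordinatewise. Since $\alpha_I$ is supported exactly on $I$ with nonzero entries, an element $p$ lying in exactly one triple of $\Ib$ makes the $p$-th coordinate of the relation nonzero. This disposes of $\abs{\Ib}=3$: a counting argument forces such a $p$ when $\abs{\bigcup\Ib}=5$, and otherwise the relation lives in some $D_T$ with $\abs{T}=4$. For $\abs{\Ib}=4$ it gives $m(p)\ge 2$ for all $p$. The paper then splits into the multisets $\{3,3,2,2,2\}$ and $\{4,2,2,2,2\}$ and treats them with two separate graph arguments.

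\textbf{Your route.} You pass to an inclusion-minimal certificate, whose support is a circuit. A support of size at most three then has rank at most two, so it lies in a localization of codimension at most two; no coordinates are needed. A $4$-circuit cannot contain three triples from a common $4$-set $T$, because $\dim D_T^\perp=2$. In complement-graph language this says the maximum degree is at most two. That is exactly the paper's condition $m(p)\ge 2$ in disguise, since a vertex of degree $\ge 3$ is an element lying in at most one triple.

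\textbf{What each buys.} Your formulation handles all cases with one uniform enumeration. It also makes clear that the three orbits are exactly the four-element circuits of the matroid of $\Bc(5,2)$. For the proposition itself you only need the easy direction, ``three triples in a common $4$-set are dependent'', which holds for every generic $\Ac^0$; so the appeal to \zcref{thm:BB-poset} is not really necessary. The paper's approach, in turn, avoids the circuit reduction and obtains the multiplicity condition for any size-four certificate directly from the shape of the $\alpha_I$.

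\textbf{The step you flagged as delicate is routine.} Suppose the support properly contains a circuit with relation $\mu$, normalized to have a positive entry. Then $\lambda-t\mu$ with $t=\min\{\lambda_I/\mu_I\mid\mu_I>0\}$ is a nonnegative, nonzero certificate with strictly smaller support.

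\textbf{Making the enumeration exhaustive.} A graph of maximum degree at most two is a disjoint union of paths and cycles. Its number of non-isolated vertices equals the number of edges plus the number of path components. With four edges on five vertices this allows at most one path component, which leaves only $P_5$, $C_4\sqcup K_1$ and $C_3\sqcup K_2$.
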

\begin{proof}
  Suppose that $\varepsilon \in \Sigma_2 \setminus \Sigma_3$.
  Since $\varepsilon \notin \Sigma_3$, there exists $X \in L(\Bc(5,2))$ of codimension three such that $\varepsilon$ is inconsistent at $X$.
  By \zcref{thm:gordan}, there exists a nonzero vector $(\lambda_I)_{D_I \in \Bc(5,2)_X}$ with $\lambda_I \ge 0$ such that
  \begin{equation*}
    \sum_{D_I \in \Bc(5,2)_X} \lambda_I \varepsilon_I \alpha_I = 0 .
  \end{equation*}

  Since $\Bc(5,2)_X$ has rank at most three, by Carath\'eodory's theorem for convex cones \cite[Proposition 1.15]{Zie95}, we may choose such a relation with support $\Ib = \{ I \in \binom{[5]}{3} \mid \lambda_I \ne 0 \}$ of cardinality at most four.
  After removing zero coefficients, we may assume that $\lambda_I > 0$ for every $I \in \Ib$.

  If $\abs{\Ib} \le 2$, this relation is supported in a localization of codimension at most two, contradicting $\varepsilon \in \Sigma_2$.
  Thus $\abs{\Ib} = 3$ or $\abs{\Ib} = 4$.

  First suppose that $\abs{\Ib} = 3$, say $\Ib = \{ I_1, I_2, I_3 \}$.
  Set $T = \bigcup_{j=1}^3 I_j$.
  If $\abs{T} = 5$, then some element $p \in T$ belongs to exactly one of $I_1, I_2, I_3$, since otherwise the total multiplicity would be at least $10$, whereas the total multiplicity is $9 = 3 \times 3$.
  Looking at the $p$-th coordinate of $\sum_{j=1}^3 \lambda_{I_j} \varepsilon_{I_j} \alpha_{I_j} = 0$ gives a contradiction.
  Hence $\abs{T} \le 4$, and the relation is supported in a localization of codimension at most two, again contradicting $\varepsilon \in \Sigma_2$.

  Thus, $\abs{\Ib} = 4$.
  Write $\Ib = \{ I_1, I_2, I_3, I_4 \}$, $T = \bigcup_{j=1}^4 I_j$.
  If $\abs{T} \le 4$, then the relation is supported in a localization of codimension at most two, contradicting $\varepsilon \in \Sigma_2$, hence $\abs{T} = 5$.

  If some $p \in T$ belongs to exactly one of the four triples, then the $p$-th coordinate of $\sum_{j=1}^4 \lambda_{I_j} \varepsilon_{I_j} \alpha_{I_j} = 0$ has only one nonzero term, a contradiction.
  Therefore, every $p \in T$ belongs to at least two of the four triples.

  For $p \in T$, define the multiplicity
  \begin{equation*}
    m(p) = \bigl| \{ j \in \{ 1, 2, 3, 4 \} \mid p \in I_j \} \bigr| .
  \end{equation*}
  Since $\sum_{p \in T} m(p) = 12$ and $m(p) \ge 2$ for every $p \in T$, the multiset $\{ m(p) \mid p \in T \}$ is either $\{ 3, 3, 2, 2, 2 \}$ or $\{ 4, 2, 2, 2, 2 \}$.

  Suppose first that $\{ m(p) \mid p \in T \} = \{ 3, 3, 2, 2, 2 \}$.
  For each $I \in \Ib$, consider its complement $[5] \setminus I$, and define $E_\Ib = \{ [5] \setminus I \mid I \in \Ib \}$.
  Then $E_\Ib \subset \binom{[5]}{2}$ can be regarded as the edge set of a graph on the vertex set $[5]$, where the degree of a vertex $p$ is $4 - m(p)$.
  Hence the degree sequence is $(2, 2, 2, 1, 1)$.
  Up to relabeling, there are exactly two such graphs:
  the path graph $\{ \{ p_1, p_2 \}, \{ p_2, p_3 \}, \{ p_3, p_4 \}, \{ p_4, p_5 \} \}$, and the disjoint union $\{ 12, 34, 35, 45 \} = K_2 \sqcup K_3$ of complete graphs.
  Passing to complements gives $\Ib \in \Sfrk_5 \cdot \{ 123, 124, 135, 245 \}$ or $\Ib \in \Sfrk_5 \cdot \{ 123, 124, 125, 345 \}$.

  Now suppose that $\{ m(p) \mid p \in T \} = \{ 4, 2, 2, 2, 2 \}$.
  After relabeling, we may assume that $m(1) = 4$ and each $I \in \Ib$ is of the form $1ab$ with $a, b \in \{ 2, 3, 4, 5 \}$.
  Removing 1, we obtain $E = \{ I \setminus \{ 1 \} \mid I \in \Ib \} \subset \binom{\{2,3,4,5\}}{2}$.
  Then $E$ can be regarded as the edge set of a graph on the vertex set $\{ 2, 3, 4, 5 \}$.
  Since each of $2, 3, 4, 5$ occurs exactly twice in the members of $\Ib$, every vertex of $E$ has degree two, that is, the graph is a 4-cycle.
  Up to relabeling, we have $E = \{ 23, 24, 35, 45 \}$.
  Considering the complements gives $\Ib \in \Sfrk_5 \cdot \{ 123, 124, 135, 145 \}$, which completes the proof.
\end{proof}

In the remainder of this section, we fix a concrete realization of $\Bc(5, 2)$.
Choose pairwise distinct real numbers $s_1 < \dots < s_5$, and set $\alpha_i = (1, s_i) \in (\Rb^2)^{\ast}$ for $i = 1, \dots, 5$.
Let $\Ac^0$ be the arrangement in $\Rb^2$ with normal vector $\alpha_i$, and let $\Bc(5,2)$ be its associated Manin--Schechtman arrangement.

\begin{prop}\label{prop:52-2}
  Let $\Ib \in \Sfrk_5 \cdot \{ 123, 124, 135, 245 \}$.
  Assume that there exist positive coefficients $\lambda_I$, $I \in \Ib$ and a sign vector $\varepsilon^{(0)} \in \{ \pm \}^\Ib$ such that $\sum_{I \in \Ib} \lambda_I \varepsilon^{(0)}_I \alpha_I = 0$.
  Then every extension $\varepsilon \in \{ \pm \}^{\binom{[5]}{3}}$ with $\varepsilon\vert_\Ib = \varepsilon^{(0)}$ satisfies $\varepsilon \notin \Sigma_2$.
\end{prop}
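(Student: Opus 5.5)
The plan is to exploit the fact that, in the realization $\alpha_i = (1,s_i)$, everything is explicit. For $I = \{a<b<c\}$ the normal vector $\alpha_I$ has entries $s_c - s_b$, $-(s_c - s_a)$ and $s_b - s_a$ in positions $a$, $b$, $c$, so the sign pattern of each $\alpha_I$ is $(+,-,+)$ in increasing order of its indices. A sign vector $\varepsilon \in \Sigma_2$ must be consistent at every codimension-two flat. By \zcref{prop:localization-direct-sum}, these flats are of two kinds:
\begin{itemize}
  \item the Boolean flats $D_I \cap D_J$ with $\abs{I \cup J} = 5$, where consistency is automatic;
  \item the five flats $D_T$ with $T \in \binom{[5]}{4}$, whose localization is $\Bc(T,2)$, a rank-two arrangement of four lines.
\end{itemize}
So the statement reduces to the following claim: given the forced signs $\varepsilon^{(0)}$ on $\Ib$, there is no choice of the six remaining signs that makes all five rank-two subsystems $\Bc(T,2)$ consistent.

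\textbf{Step 1: determine $\varepsilon^{(0)}$.} I would first extract the signs from the assumed positive relation. For the representative $\Ib = \{123,124,135,245\}$, index $3$ occurs only in $123$ and $135$, index $4$ only in $124$ and $245$, and index $5$ only in $135$ and $245$. Reading off coordinates $3$, $4$ and $5$ of $\sum_{I \in \Ib} \lambda_I \varepsilon^{(0)}_I \alpha_I = 0$ fixes the products $\varepsilon_{123}\varepsilon_{135}$, $\varepsilon_{124}\varepsilon_{245}$ and $\varepsilon_{135}\varepsilon_{245}$. Hence $\varepsilon^{(0)}$ is determined up to a global sign, and coordinates $1$ and $2$ only constrain the $\lambda$'s. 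The relevant signs depend on the relative order of the $s_i$. Since $\Ib$ ranges over the whole $\Sfrk_5$-orbit while the realization is ordered, I would handle this by writing every sign in terms of $\sgn(s_a - s_b)$ for labels $a,b$ in general position. Global sign reversal preserves $\Sigma_2$, so it can be ignored.

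\textbf{Step 2: encode rank-two consistency.} For each $T \in \binom{[5]}{4}$, the four forms $\alpha_I$ with $I \in \binom{T}{3}$ span a plane and satisfy the single relation $\sum_{I} c^T_I \alpha_I = 0$ coming from the Vandermonde/Gale structure, with explicit signs. In the plane, four lines in general position give $8$ chambers, so $8$ of the $16$ sign patterns are consistent. By \zcref{thm:gordan}, inconsistency means that some subset of the four signed vectors positively spans $0$. I would describe the $8$ consistent patterns on $\Bc(T,2)$ as exactly those in which the cyclic sign sequence, in the natural circular order of the four lines, has the right number of changes. This gives a clean combinatorial rule for each $T$.

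\textbf{Step 3: derive the contradiction.} Now check the five conditions. The flats $T = 1234$, $1235$, $1245$ each contain two members of $\Ib$, while $1345$ and $2345$ contain one member each ($135$ and $245$ respectively). The six unknowns $125$, $134$, $145$, $234$, $235$, $345$ each lie in exactly two of these $T$. I expect that the rules for $1234$, $1235$ and $1245$ will force $\varepsilon_{125}$, and will force relations between $\varepsilon_{134}, \varepsilon_{234}$ and between $\varepsilon_{145}, \varepsilon_{235}$ in terms of $\varepsilon^{(0)}$. Feeding these into $1345$ and $2345$, together with $\varepsilon_{345}$, should produce an odd parity contradiction. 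Conceptually, the contradiction is a linear-algebra fact: the four-term relation on $\Ib$ is a signed sum of the rank-two relations $c^T$, and the cancelling terms on the unknown hyperplanes cannot all receive compatible signs.

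\textbf{Main obstacle.} The hardest part is the bookkeeping in Step 3 across the orbit: the sign patterns of the $c^T$ depend on the relative order of the labels. First I would try to avoid this by proving the parity contradiction for the oriented matroid directly. Rank three uniform on five elements is unique up to reorientation, as noted at the start of this section, and reorientation acts compatibly on $\varepsilon$, $\Sigma_2$ and the certificates. This would reduce the argument to the single representative with $s_1 < \dots < s_5$, where everything is an explicit finite check.
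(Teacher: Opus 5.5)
There is a genuine gap in your proposal. Your reduction to the five rank-two flats $D_T$, $T\in\binom{[5]}{4}$, is correct, and so is your Step 1. But the argument stops exactly where the proof has to happen. Step 3 is only an expectation (``should produce an odd parity contradiction''), and the shape you predict is not what actually occurs.

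The missing idea is that $\{123,124,135,245\}$ is obtained by eliminating $125$ from two three-element circuits: $\{123,125,135\}$ in $D_{1235}$ and $\{124,125,245\}$ in $D_{1245}$. This combines with a rank-two forcing rule. If $w=au+bv$ and the signs on $u,v$ make $a\varepsilon_u$ and $b\varepsilon_v$ equal, then consistency forces $\varepsilon_w$ to be that common sign.

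Write $s_i,\alpha_{ijl}$ for $s_{\sigma(i)},\alpha^\sigma_{ijl}$.
\begin{itemize}
\item First, $\alpha_{125}=\frac{s_2-s_5}{s_2-s_4}\alpha_{124}+\frac{s_1-s_2}{s_2-s_4}\alpha_{245}$. With the signs from your Step 1 the rule applies, so consistency at $D_{1245}$ forces $\varepsilon_{125}$.
\item Then $\alpha_{135}=\frac{s_5-s_1}{s_1-s_2}\alpha_{123}+\frac{s_1-s_3}{s_1-s_2}\alpha_{125}$. The rule applies again, and consistency at $D_{1235}$ forces $\varepsilon_{135}$ to be the opposite of its given value.
\end{itemize}
Equivalently, $D_{1235}$ and $D_{1245}$ force opposite values of $\varepsilon_{125}$. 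All signs cancel symbolically, so this works for every $\sigma$ with no case split. The flats $D_{1234}$, $D_{1345}$, $D_{2345}$ and the other five unknown signs are never used. Your plan to push forced values through to $D_{1345}$ and $D_{2345}$ therefore misplaces the contradiction: there is no consistent value of $\varepsilon_{125}$ to feed forward.

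Two further points.
\begin{itemize}
\item In Step 2, four vectors spanning a plane do not satisfy a single relation $c^T$. Their relation space is two-dimensional, and every three-element subset is a circuit. It is these three-term relations that carry the argument.
\item Your shortcut for the orbit bookkeeping is not valid. As a \emph{labeled} oriented matroid, the uniform rank-three oriented matroid on $[5]$ has $12$ reorientation classes; dually, these are the cyclic orders of five lines in $\Rb^2$ up to reversal. Uniqueness holds only up to reorientation \emph{and relabeling}, and relabeling the indices is exactly what moves $\Ib$ through its $60$-element $\Sfrk_5$-orbit. So you cannot fix both $\Ib=\{123,124,135,245\}$ and $s_1<\dots<s_5$ at once. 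Instead, keep the signs symbolic in $\sgn(s_a-s_b)$, as you yourself suggest in Step 1, or use the label-free circuit-elimination argument above.
\end{itemize}
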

\begin{proof}
  Let $\Ib_0 = \{ 123, 124, 135, 245 \}$ and choose $\sigma \in \Sfrk_5$ such that $\Ib = \sigma \cdot \Ib_0$.
  A direct computation shows that
  \begin{equation*}
    \lambda_{123}\alpha^\sigma_{123} - \lambda_{124}\alpha^\sigma_{124} + \lambda_{135}\alpha^\sigma_{135} - \lambda_{245}\alpha^\sigma_{245} = 0 ,
  \end{equation*}
  where $\lambda_{123} = (s_{\sigma(1)} - s_{\sigma(5)})(s_{\sigma(2)} - s_{\sigma(4)})$, $\lambda_{124} = (s_{\sigma(1)} - s_{\sigma(3)})(s_{\sigma(2)}-s_{\sigma(5)})$, $\lambda_{135} = (s_{\sigma(1)} - s_{\sigma(2)})(s_{\sigma(2)} - s_{\sigma(4)})$ and $\lambda_{245} = (s_{\sigma(1)} - s_{\sigma(2)})(s_{\sigma(1)} - s_{\sigma(3)})$.

  Define a sign vector $ \eta \in \{ \pm \}^{\Ib_0} $ by
  \begin{equation}\label{eq:52-2-sign}
    \eta_{123} = \sgn(\lambda_{123}), \eta_{124} = -\sgn(\lambda_{124}), \eta_{135} = \sgn(\lambda_{135}), \eta_{245} = -\sgn(\lambda_{245}) .
  \end{equation}
  Then we have a Gordan certificate $\sum_{I \in \Ib_0} \abs{\lambda_I} \eta_I \alpha^\sigma_I = 0$.
  Since any three of these four vectors are linearly independent, this sign vector $\eta$ is uniquely determined up to overall sign.
  Thus, after replacing the extension by its negative if necessary, we may assume $ \varepsilon_I = \eta_I $ for $ I \in \Ib_0 $.

  Suppose, for contradiction, that some extension $\varepsilon \in \{\pm\}^{\binom{[5]}{3}}$ with $\varepsilon\vert_{\Ib_0} = \eta$ belongs to $\Sigma_2$.

  Consider the localization corresponding to $\sigma \cdot \{ 1, 2, 4, 5 \}$.
  We have
  \begin{equation}\label{eq:a125}
    \alpha^\sigma_{125} = \frac{s_{\sigma(2)}-s_{\sigma(5)}}{s_{\sigma(2)}-s_{\sigma(4)}}\alpha^\sigma_{124} + \frac{s_{\sigma(1)}-s_{\sigma(2)}}{s_{\sigma(2)}-s_{\sigma(4)}}\alpha^\sigma_{245} .
  \end{equation}
  By \eqref{eq:52-2-sign}, the two coefficients in \eqref{eq:a125} have the same sign after multiplication by $\varepsilon_{124}$ and $\varepsilon_{245}$.
  Since $\varepsilon \in \Sigma_2$, consistency at the localization corresponding to $\sigma \cdot \{ 1, 2, 4, 5 \}$ forces
  \begin{equation}\label{eq:e125}
    \varepsilon_{125} = - \sgn\left( \frac{s_{\sigma(1)} - s_{\sigma(3)}}{s_{\sigma(2)} - s_{\sigma(4)}} \right) .
  \end{equation}

  Next, consider the localization at $\sigma \cdot \{ 1, 2, 3, 5 \}$.
  We have
  \begin{equation}\label{eq:a135}
    \alpha^\sigma_{135} = \frac{s_{\sigma(5)} - s_{\sigma(1)}}{s_{\sigma(1)} - s_{\sigma(2)}}\alpha^\sigma_{123} + \frac{s_{\sigma(1)} - s_{\sigma(3)}}{s_{\sigma(1)} - s_{\sigma(2)}}\alpha^\sigma_{125} .
  \end{equation}
  Using \eqref{eq:52-2-sign} together with \eqref{eq:e125}, the two coefficients in \eqref{eq:a135} again have the same sign after multiplication by $\varepsilon_{123}$ and $\varepsilon_{125}$.
  Hence consistency at the localization corresponding to $\sigma \cdot \{ 1, 2, 3, 5 \}$ forces
  \begin{equation}
    \varepsilon_{135} = -\sgn\left(\frac{s_{\sigma(2)} - s_{\sigma(4)}}{s_{\sigma(1)} - s_{\sigma(2)}}\right)
  \end{equation}
  which contradicts the sign fixed in \eqref{eq:52-2-sign}.
\end{proof}

\begin{prop}\label{prop:52-3}
  Let $\Ib \in \Sfrk_5 \cdot \{ 123, 124, 125, 345 \}$.
  Assume that there exist positive coefficients $\lambda_I$, $I \in \Ib$ and a sign vector $\varepsilon^{(0)} \in \{ \pm \}^\Ib$ such that $\sum_{I \in \Ib} \lambda_I \varepsilon^{(0)}_I \alpha_I = 0$.
  Then every extension $\varepsilon \in \{ \pm \}^{\binom{[5]}{3}}$ with $\varepsilon\vert_\Ib = \varepsilon^{(0)}$ satisfies $\varepsilon \notin \Sigma_2$.
\end{prop}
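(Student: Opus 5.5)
The plan is to follow the proof of \zcref{prop:52-2}: write the certificate explicitly, normalize its sign, and then propagate forced signs through rank-two localizations until one of them contradicts the certificate. Let $\Ib_0=\{123,124,125,345\}$ and $\Ib=\sigma\cdot\Ib_0$.

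First I compute the certificate. Since each of $\sigma(3),\sigma(4),\sigma(5)$ lies in exactly two members of $\Ib$, the coordinate equations at these indices fix the ratios $\lambda_{12j}:\lambda_{345}$, and the coordinates at $\sigma(1),\sigma(2)$ then hold automatically. With $\alpha_i=(1,s_i)$, every entry of $\alpha_I$ is a difference $s_a-s_b$. So I expect
\begin{equation*}
  \lambda_{123}\alpha^\sigma_{123}+\lambda_{124}\alpha^\sigma_{124}+\lambda_{125}\alpha^\sigma_{125}+\lambda_{345}\alpha^\sigma_{345}=0,
\end{equation*}
with each $\lambda_{12j}$ a product of differences of the $s_{\sigma(\cdot)}$. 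For example, $\lambda_{123}$ should be a multiple of $(s_{\sigma(4)}-s_{\sigma(5)})$ and $\lambda_{345}$ a multiple of $(s_{\sigma(1)}-s_{\sigma(2)})$. Any three of the four vectors are linearly independent, because $\Bc(5,2)$ is simple of rank three with uniform matroid. Hence the sign vector $\eta=\varepsilon^{(0)}$ is determined up to a global sign by the signs of these products. As before, I replace $\varepsilon$ by $-\varepsilon$ if necessary.

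Next I propagate signs. Assume some extension $\varepsilon\in\Sigma_2$. Each localization $\sigma\cdot\{1,2,j,l\}$ ($\{j,l\}\subset\{3,4,5\}$) is a rank-two arrangement of four lines. It contains two members of $\Ib$, namely $12j$ and $12l$, together with $1jl$ and $2jl$. Consistency there means that the restriction of $\varepsilon$ is a covector of this four-line arrangement. Concretely, whenever $\alpha_{1jl}$ (or $\alpha_{2jl}$) equals a combination of $\alpha_{12j},\alpha_{12l}$ whose coefficients have the same sign after multiplying by $\varepsilon_{12j},\varepsilon_{12l}$, the sign of the third form is forced, exactly as in \eqref{eq:e125}.

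I will then close the loop through a localization containing $345$, such as $\sigma\cdot\{1,3,4,5\}\ni 134,135,145,345$. Forced signs on two of $134,135,145$, together with $\eta_{345}$, should violate consistency there. In other words, they should produce a two-term positive relation, i.e.\ a Gordan certificate in a codimension-two localization, contradicting $\varepsilon\in\Sigma_2$.

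The main obstacle is that whether a forced sign actually arises depends on the relative order of $s_{\sigma(1)},\dots,s_{\sigma(5)}$, because the same-sign condition may fail for a given pair. I would handle this by cases. The stabilizer of $\Ib_0$ is $\Sfrk_{\{1,2\}}\times\Sfrk_{\{3,4,5\}}$, so $\Ib$ is determined by which pair of positions plays the role of $\{1,2\}$ among $s_1<\dots<s_5$. That gives ten cases, reduced further by the reversal $s\mapsto -s$. In each case I would choose the path of localizations adapted to the cyclic order of the four lines in each pencil. Equivalently, I would check directly that the partial covector conditions on $\sigma\cdot\{1,2,j,l\}$ and $\sigma\cdot\{1,3,4,5\}$ (or $\sigma\cdot\{2,3,4,5\}$) are incompatible with $\eta$.
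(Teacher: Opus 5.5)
Your plan is the paper's proof. With $\sigma$ normalized so that $s_{\sigma(1)}<s_{\sigma(2)}$ and $s_{\sigma(3)}<s_{\sigma(4)}<s_{\sigma(5)}$, the certificate fixes $(\varepsilon_{123},\varepsilon_{124},\varepsilon_{125},\varepsilon_{345})=(-,+,-,+)$ up to a global sign. The localizations $\sigma\cdot\{1,2,j,l\}$ then force exactly two of $\varepsilon_{134},\varepsilon_{135},\varepsilon_{145}$, and consistency at $\sigma\cdot\{1,3,4,5\}$ fails ($\sigma\cdot\{2,3,4,5\}$ is never needed).

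What you deferred is the sign check itself, and it is simpler than your ten cases suggest. Since $s_{\sigma(1)}-s_{\sigma(2)}<0$ throughout, only the position of $s_{\sigma(1)}$ relative to $s_{\sigma(3)}<s_{\sigma(4)}<s_{\sigma(5)}$ matters, which gives the paper's four cases. Separately, the matroid of $\Bc(5,2)$ is not uniform, since each $\Bc(T,2)$ with $\abs{T}=4$ consists of four hyperplanes through a codimension-two flat. Your coordinate computation already shows the relation is unique up to scaling, though, so that justification can simply be dropped.
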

\begin{proof}
  The proof is parallel to that of \zcref{prop:52-2}.
  Let $\Ib_0 = \{ 123, 124, 125, 345 \}$ and choose $\sigma \in \Sfrk_5$ such that $\Ib = \sigma \cdot \Ib_0$.
  Since the stabilizer of $\Ib_0$ contains $\Sfrk_{\{ 1, 2 \}} \times \Sfrk_{\{ 3, 4, 5 \}}$, we may choose $\sigma$ so that
  \begin{equation*}
    s_{\sigma(1)} < s_{\sigma(2)}, \quad s_{\sigma(3)} < s_{\sigma(4)} < s_{\sigma(5)} .
  \end{equation*}
  For $ i < j < l $, set $\alpha^\sigma_{ijl} \coloneq \alpha_{\sigma(i) \sigma(j) \sigma(l)}$.
  A direct computation gives
  \begin{equation*}
    (s_{\sigma(4)} - s_{\sigma(5)}) \alpha^\sigma_{123} - (s_{\sigma(3)} - s_{\sigma(5)}) \alpha^\sigma_{124} + (s_{\sigma(3)} - s_{\sigma(4)}) \alpha^\sigma_{125} - (s_{\sigma(1)} - s_{\sigma(2)}) \alpha^\sigma_{345} = 0 .
  \end{equation*}
  Hence, after replacing the extension by its negative if necessary, we may assume
  \begin{equation*}
    (\varepsilon_{123}, \varepsilon_{124}, \varepsilon_{125}, \varepsilon_{345}) = (-, +, -, +) ,
  \end{equation*}
  where $\varepsilon_{ijk}$ denotes the sign assigned to $\alpha^\sigma_{ijk}$.

  We now distinguish four cases according to the position of $s_{\sigma(1)}$
  among
  $ s_{\sigma(3)} < s_{\sigma(4)} < s_{\sigma(5)} $.

  \noindent \textbf{Case 1:}
  $ s_{\sigma(1)} < s_{\sigma(3)} < s_{\sigma(4)} < s_{\sigma(5)} $ .

  In the localization corresponding to $\sigma \cdot \{ 1, 2, 3, 4 \} $, we have
  \begin{equation}\label{eq:a134}
    \alpha^\sigma_{134} = \frac{s_{\sigma(4)} - s_{\sigma(1)}}{s_{\sigma(1)} - s_{\sigma(2)}} \alpha^\sigma_{123} + \frac{s_{\sigma(1)} - s_{\sigma(3)}}{s_{\sigma(1)} - s_{\sigma(2)}} \alpha^\sigma_{124} .
  \end{equation}
  The two coefficients in \eqref{eq:a134} have the same sign after multiplication by $\varepsilon_{123}$ and $\varepsilon_{124}$.
  Since $\varepsilon \in \Sigma_2$, consistency at this localization forces $\varepsilon_{134} = +$.
  Similarly, at the localization $\sigma \cdot \{ 1, 2, 4, 5 \} $, we have
  \begin{equation}\label{eq:a145}
    \alpha^\sigma_{145} = \frac{s_{\sigma(5)} - s_{\sigma(1)}}{s_{\sigma(1)} - s_{\sigma(2)}} \alpha^\sigma_{124} + \frac{s_{\sigma(1)} - s_{\sigma(4)}}{s_{\sigma(1)} - s_{\sigma(2)}} \alpha^\sigma_{125} ,
  \end{equation}
  and consistency forces $\varepsilon_{145} = -$.

  Now consider the localization at $\sigma \cdot \{ 1, 3, 4, 5 \}$.
  We have
  \begin{equation}\label{eq:a134_2}
    \alpha^\sigma_{134} = \frac{s_{\sigma(3)} - s_{\sigma(4)}}{s_{\sigma(4)} - s_{\sigma(5)}} \alpha^\sigma_{145} + \frac{s_{\sigma(4)} - s_{\sigma(1)}}{s_{\sigma(4)} - s_{\sigma(5)}} \alpha^\sigma_{345} .
  \end{equation}
  The two coefficients have the same sign after multiplication by $\varepsilon_{145}$ and $\varepsilon_{345}$.
  Hence consistency forces $\varepsilon_{134} = -$, a contradiction.

  \noindent\textbf{Case 2:}
  $s_{\sigma(3)} < s_{\sigma(1)} < s_{\sigma(4)} < s_{\sigma(5)}$.

  At the localization $\sigma \cdot \{ 1, 2, 3, 5 \} $, using the relation \eqref{eq:a135}, consistency forces $\varepsilon_{135} = +$.
  Also, at the localization $\sigma \cdot \{ 1, 2, 4, 5 \}$, from the relation \eqref{eq:a145} we have $\varepsilon_{145} = -$.
  On the other hand, at the localization $\sigma \cdot \{ 1, 3, 4, 5 \}$, we have
  \begin{equation*}
    \alpha^\sigma_{135} = \frac{s_{\sigma(3)} - s_{\sigma(5)}}{s_{\sigma(4)} - s_{\sigma(5)}} \alpha^\sigma_{145} + \frac{s_{\sigma(5)} - s_{\sigma(1)}}{s_{\sigma(4)} - s_{\sigma(5)}} \alpha^\sigma_{345} .
  \end{equation*}
  Under the present order, the two coefficients have the same sign after multiplication by $\varepsilon_{145}$ and $\varepsilon_{345}$, so consistency forces $\varepsilon_{135} = -$, a contradiction.

  \noindent\textbf{Case 3:} $ s_{\sigma(3)} < s_{\sigma(4)} < s_{\sigma(1)} < s_{\sigma(5)} $.

  At the localization $\sigma \cdot \{ 1, 2, 3, 4 \}$, the relation \eqref{eq:a134} forces $\varepsilon_{134} = -$, and at the localization $\sigma \cdot \{ 1, 2, 3, 5 \}$, the relation \eqref{eq:a135} forces $\varepsilon_{135} = +$.
  On the other hand, at the localization $\sigma \cdot \{ 1, 3, 4, 5 \}$, we have
  \begin{equation*}
    \alpha^\sigma_{134} = \frac{s_{\sigma(3)} - s_{\sigma(4)}}{s_{\sigma(3)} - s_{\sigma(5)}} \alpha^\sigma_{135} + \frac{s_{\sigma(3)} - s_{\sigma(1)}}{s_{\sigma(3)} - s_{\sigma(5)}} \alpha^\sigma_{345}.
  \end{equation*}
  which forces $\varepsilon_{134} = +$, a contradiction.

  \noindent
  \textbf{Case 4:} $s_{\sigma(3)} < s_{\sigma(4)} < s_{\sigma(5)} < s_{\sigma(1)} $.

  At the localization $\sigma \cdot \{ 1, 2, 3, 4 \}$, the relation \eqref{eq:a134} forces $\varepsilon_{134} = -$ and at the localization $\sigma \cdot \{ 1, 2, 4, 5 \}$, the relation \eqref{eq:a145} forces $\varepsilon_{145} = +$.
  On the other hand, at the localization $\sigma \cdot \{ 1, 3, 4, 5 \}$, using the relation \eqref{eq:a134_2}, consistency forces $\varepsilon_{134} = +$, a contradiction.

  In all cases, we obtain a contradiction.
  Hence, no extension belongs to $\Sigma_2$.
\end{proof}

\begin{prop}\label{prop:52-4}
  Let $\Ib \in \Sfrk_5 \cdot \{  123, 124, 135, 145 \}$.
  Assume that there exist positive coefficients $\lambda_I$, $I \in \Ib$ and a sign vector $\varepsilon^{(0)} \in \{ \pm \}^\Ib$ such that $\sum_{I \in \Ib} \lambda_I \varepsilon^{(0)}_I \alpha_I = 0$.
  Then every extension $\varepsilon \in \{ \pm \}^{\binom{[5]}{3}}$ with $\varepsilon\vert_\Ib = \varepsilon^{(0)}$ satisfies $\varepsilon \notin \Sigma_2$.
\end{prop}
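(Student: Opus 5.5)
The plan is to avoid coordinates. All four hyperplanes in $\Ib_0=\{123,124,135,145\}$ contain the index $1$, and the sets split into two pairs, $\{123,124\}$ and $\{135,145\}$. Each pair lies in a rank-two localization, and the two localizations share the hyperplane $D_{134}$. I will show that the Gordan certificate forces two incompatible values of $\varepsilon_{134}$.

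Choose $\sigma\in\Sfrk_5$ with $\Ib=\sigma\cdot\Ib_0$ and write $\alpha^\sigma_{ijl}=\alpha_{\sigma(i)\sigma(j)\sigma(l)}$. Suppose the certificate
\begin{equation*}
\lambda_{123}\varepsilon_{123}\alpha^\sigma_{123}+\lambda_{124}\varepsilon_{124}\alpha^\sigma_{124}+\lambda_{135}\varepsilon_{135}\alpha^\sigma_{135}+\lambda_{145}\varepsilon_{145}\alpha^\sigma_{145}=0
\end{equation*}
holds with all $\lambda_I>0$, and let $\varepsilon$ be an extension with $\varepsilon\in\Sigma_2$. Put
\begin{equation*}
v\coloneq\lambda_{123}\varepsilon_{123}\alpha^\sigma_{123}+\lambda_{124}\varepsilon_{124}\alpha^\sigma_{124}=-\bigl(\lambda_{135}\varepsilon_{135}\alpha^\sigma_{135}+\lambda_{145}\varepsilon_{145}\alpha^\sigma_{145}\bigr).
\end{equation*}

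First, $v\neq 0$. Otherwise $\alpha^\sigma_{123}$ and $\alpha^\sigma_{124}$ would be parallel, which is impossible for distinct hyperplanes.

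Next, $v$ lies in the normal space $W_1$ of the rank-two localization at $D_{\sigma\{1,2,3,4\}}$, which is spanned by $\alpha^\sigma_{123},\alpha^\sigma_{124}$ and contains $\alpha^\sigma_{134}$. It also lies in the normal space $W_2$ of the localization at $D_{\sigma\{1,3,4,5\}}$, which is spanned by $\alpha^\sigma_{135},\alpha^\sigma_{145}$ and also contains $\alpha^\sigma_{134}$. By \zcref{thm:BB-poset}, $D_{\sigma\{1,2,3,4\}}\cap D_{\sigma\{1,3,4,5\}}=D_{\sigma[5]}$ has codimension $3$, so $W_1+W_2$ is three-dimensional. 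Hence $W_1\neq W_2$, and $W_1\cap W_2=\Rb\alpha^\sigma_{134}$. Therefore $v=c\,\alpha^\sigma_{134}$ for some $c\neq 0$.

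Now apply consistency at the localization $\sigma\cdot\{1,2,3,4\}$, which has codimension two, so $\varepsilon$ must be consistent there. If $\varepsilon_{134}=-\sgn(c)$, then
\begin{equation*}
\lambda_{123}\varepsilon_{123}\alpha^\sigma_{123}+\lambda_{124}\varepsilon_{124}\alpha^\sigma_{124}+\abs{c}\,\varepsilon_{134}\alpha^\sigma_{134}=0
\end{equation*}
is a Gordan certificate supported in this localization. By \zcref{thm:gordan}, this contradicts $\varepsilon\in\Sigma_2$, so $\varepsilon_{134}=\sgn(c)$.

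The same argument at $\sigma\cdot\{1,3,4,5\}$ uses the positive combination of $\varepsilon_{135}\alpha^\sigma_{135}$ and $\varepsilon_{145}\alpha^\sigma_{145}$, which equals $-v=-c\,\alpha^\sigma_{134}$. It forces $\varepsilon_{134}=-\sgn(c)$. These two conclusions contradict each other, so no extension belongs to $\Sigma_2$.

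The only delicate point is the identification $W_1\cap W_2=\Rb\alpha^\sigma_{134}$, which needs the two rank-two flats to be distinct with sum of rank three. I would justify this through the lattice description of \zcref{thm:BB-poset}. If one prefers to stay in the fixed realization $\alpha_i=(1,s_i)$, it can be checked directly from the determinant formula for $\alpha_I$: $\alpha^\sigma_{123},\alpha^\sigma_{124},\alpha^\sigma_{135}$ are linearly independent because they have independent supports on the coordinates $\sigma(4)$ and $\sigma(5)$.

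One could instead follow the style of \zcref{prop:52-2}: write the explicit two-term relations for $\alpha^\sigma_{134}$ in each localization, with coefficients that are ratios of differences $s_{\sigma(i)}-s_{\sigma(j)}$, and compare signs. That route needs no case analysis on the order of the $s_{\sigma(i)}$, since only the sign of $c$ matters. I still expect the coordinate-free argument above to be cleaner.
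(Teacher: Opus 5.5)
Your proof is correct and uses the same strategy as the paper: consistency at the two rank-two localizations $\sigma\cdot\{1,2,3,4\}$ and $\sigma\cdot\{1,3,4,5\}$ forces opposite values of $\varepsilon_{134}$. The paper carries this out with explicit coefficients in the realization $\alpha_i=(1,s_i)$, after first normalizing the certificate's sign vector up to overall sign. Your dimension count $W_1\cap W_2=\Rb\alpha^\sigma_{134}$ instead works directly with the given certificate and needs only genericity. It is essentially the circuit-elimination argument sketched in the paper's remark following this proposition.
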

\begin{proof}
  The proof is again parallel to that of \zcref{prop:52-2}.
  Let $\Ib_0 = \{ 123, 124, 135, 145 \}$ and choose $\sigma \in \Sfrk_5$ such that $\Ib = \sigma \cdot \Ib_0$.
  For $i < j < l$, set $\alpha^\sigma_{ijl} \coloneq \alpha_{\sigma(i)\sigma(j)\sigma(l)}$.

  A direct computation gives
  \begin{equation*}
    \lambda_{123} \alpha^\sigma_{123} - \lambda_{124} \alpha^\sigma_{124} + \lambda_{135} \alpha^\sigma_{135} - \lambda_{145} \alpha^\sigma_{145} = 0 ,
  \end{equation*}
  where $\lambda_{123} =(s_{\sigma(1)} - s_{\sigma(4)})(s_{\sigma(1)} - s_{\sigma(5)})$, $\lambda_{124} =(s_{\sigma(1)} - s_{\sigma(3)})(s_{\sigma(1)} - s_{\sigma(5)})$, $\lambda_{135} =(s_{\sigma(1)} - s_{\sigma(2)})(s_{\sigma(1)} - s_{\sigma(4)})$ and $\lambda_{145} =(s_{\sigma(1)} - s_{\sigma(2)})(s_{\sigma(1)} - s_{\sigma(3)})$.

  Define a sign vector $\eta \in \{ \pm \}^{\Ib_0}$ by
  \begin{equation}\label{eq:52-4-sign}
    \eta_{123} = \sgn(\lambda_{123}),\quad \eta_{124} = - \sgn(\lambda_{124}),\quad \eta_{135} = \sgn(\lambda_{135}),\quad \eta_{145} = -\sgn(\lambda_{145}) .
  \end{equation}
  Then we have a Gordan certificate $\sum_{I \in \Ib_0} \abs{\lambda_I} \eta_I \alpha^\sigma_I = 0$.
  Since any three of these four vectors are linearly independent, this sign vector is uniquely determined up to overall sign.
  Thus, after replacing the extension by its negative if necessary, we may assume $\varepsilon_I = \eta_I$, $I \in \Ib_0$.

  Suppose, for contradiction, that some extension $\varepsilon \in \{ \pm \}^{\binom{[5]}{3}}$ with $\varepsilon\vert_{\Ib_0} = \eta$ belongs to $\Sigma_2$.
  First, consider the localization corresponding to $\sigma \cdot \{ 1, 2, 3, 4 \}$.
  Using the relation \eqref{eq:a134} together with \eqref{eq:52-4-sign}, the two coefficients have the same sign after multiplication by $\varepsilon_{123}$ and $\varepsilon_{124}$.
  Hence consistency forces $\varepsilon_{134} = - \sgn \left( \frac{s_{\sigma(1)} - s_{\sigma(5)}}{s_{\sigma(1)} - s_{\sigma(2)}} \right)$.

  Next, consider the localization at $\sigma \cdot \{ 1, 3, 4, 5 \}$.
  We also have
  \begin{equation*}
    \alpha^\sigma_{134} = \frac{s_{\sigma(1)} - s_{\sigma(4)}}{s_{\sigma(1)} - s_{\sigma(5)}} \alpha^\sigma_{135} + \frac{s_{\sigma(3)} - s_{\sigma(1)}}{s_{\sigma(1)} - s_{\sigma(5)}} \alpha^\sigma_{145} .
  \end{equation*}
  Again, using the sign fixed in \eqref{eq:52-4-sign}, the two signs agree, and consistency forces $\varepsilon_{134} = \sgn \left( \frac{s_{\sigma(1)} - s_{\sigma(2)}}{s_{\sigma(1)} - s_{\sigma(5)}} \right)$, a contradiction.
  Hence, no extension belongs to $\Sigma_2$.
\end{proof}

\begin{rem}
  \zcref{prop:52-2} and \zcref{prop:52-4} can also be proved by an argument based on the strong elimination property for circuits, as used in the proof of Theorem~3.2 of \cite{DDBP25}.
  For instance, we have
  \begin{equation*}
    \{ 123, 124, 135, 245 \} = \bigl( \{ 123, 125, 135 \} \cup \{ 124, 125, 245 \} \bigr) \setminus \{ 125 \} .
  \end{equation*}
  Thus, the circuit of size four is obtained by eliminating $125$ from the two circuits of size three
  \begin{equation*}
    C_1 = \{ 123, 125, 135 \}, \quad C_2 = \{ 124, 125, 245 \} .
  \end{equation*}
  Fix a sign on $\{ 123, 124, 135, 245 \}$, up to overall sign, and let $\varepsilon$ be any extension of this sign to all indices.

  Since $C_1$ and $C_2$ are circuits of size three, consistency determines the sign of $\varepsilon_{125}$ uniquely from the signs on $\{ 123, 135 \}$ and  $\{ 124, 245 \}$, respectively.
  These two induced signs are opposite.
  Hence, $\varepsilon_{125}$ agrees with exactly one of them.
  Consequently, one of the restrictions $\varepsilon\vert_{C_1}$, $\varepsilon\vert_{C_2}$ violates consistency.
  Therefore, one of the corresponding triples of half-spaces has empty intersection, and hence $\varepsilon \notin \Sigma_2$.
  Similarly, $\{ 123, 124, 135, 145 \} = \bigl( \{ 123, 124, 134 \} \cup \{ 134, 135, 145 \} \bigr) \setminus \{ 134 \}$, and the same argument shows that any extension violates consistency on one of the two circuits of size three.

  On the other hand, this argument does not apply to
  Proposition~\ref{prop:52-3}.
  Indeed, the circuit $\{ 123, 124, 125, 345 \}$ does not admit such a decomposition $C'' = (C_1'' \cup C_2'') \setminus \{ h \}$ with $C_1'' \cap C_2'' = \{ h \}$ and $\abs{C_1''}, \abs{C_2''} \ge 3$.
  Hence, it is not chordal in the sense of \cite{DDBP25}, and a separate case analysis is required.
\end{rem}
We now prove \zcref{lem:sigma-52}, as announced in \zcref{sec:main_result}.
\begin{proof}[Proof of \zcref{lem:sigma-52}]
  Since $\Sigma_2\supseteq \Sigma_3$ by definition, it suffices to prove $\Sigma_2\subseteq \Sigma_3$.
  Suppose, for contradiction, that $\varepsilon\in \Sigma_2\setminus \Sigma_3$.
  Then \zcref{prop:52-1} gives a Gordan certificate supported on one of the three $\mathfrak S_5$-orbits $\Sfrk_5 \cdot \{ 123, 124, 135, 245 \}$, $\Sfrk_5 \cdot \{ 123, 124, 125, 345 \}$ and $\Sfrk_5 \cdot \{ 123, 124, 135, 145 \}$.
  By \zcref{prop:52-2}, \zcref{prop:52-3}, and \zcref{prop:52-4}, any extension of the corresponding sign vector fails to lie in $\Sigma_2$.
  This contradicts $\varepsilon \in \Sigma_2$.
  Hence $\Sigma_2 \setminus \Sigma_3 = \emptyset$, and therefore $\Sigma_2 = \Sigma_3$.
\end{proof}

\section{Existence of non-very generic arrangements}\label{sec:non-very}
To prove \zcref{lem:non-very-3} and \ref{lem:non-very-2} in terms of parameter spaces of generic arrangements, we first recall the relevant setup.
Following \cite{Fal94,SSY17}, we realize labelled arrangements by points of a Grassmannian $\mathrm{Gr}_{k}(\Cb^{n})$, where $\mathrm{Gr}_{k}(\Cb^{n})$ consists of all $k$-dimensional subspaces of $\Cb^{n}$.

Let $\Ac=\{H_1,\dots,H_n\}$ be an essential arrangement in $\Cb^{k}$ with normal vectors $\alpha_1,\dots,\alpha_n\in (\Cb^{k})^{\ast}$ and let $W(\Ac)$ be the image of the linear map $(\alpha_1,\dots,\alpha_n)$.
Since $\Ac$ is essential, $W(\Ac)$ is the $k$--dimensional linear subspace of $\Cb^{n}$, i.e., a point in the Grassmannian $\mathrm{Gr}_{k}(\Cb^{n})$.
Different choices of normal vectors give points in the same $(\Cb^{\times})^{n}$-orbit in $\mathrm{Gr}_{k}(\Cb^{n})$.

Conversely, a point $W\in \mathrm{Gr}_{k}(\Cb^{n})$ determines an arrangement inside $W$ by intersecting $W$ with the coordinate hyperplanes of $\Cb^n$.
If $W=W(\Ac)$, this arrangement is linearly equivalent to the original arrangement $\Ac$.

We denote by $\mathrm{Gr}_{k}^{\circ}(\Cb^{n})$ the Zariski open subset of $\mathrm{Gr}_{k}(\Cb^{n})$ corresponding to generic arrangements.
Equivalently, it is the subset on which all Pl\"ucker coordinates $\Delta_{i_1\dots i_k}=\det(\alpha_{i_1}\dots\alpha_{i_k})$, $\{i_1,\dots,i_k\}\in\binom{[n]}{k}$, are nonzero.

In \cite{ABFKST23} and \cite{Sai26}, the moduli of generic arrangements is considered as the quotient $\mathrm{Gr}_{k}^{\circ}(\Cb^{n})/(\Cb^\times)^n$ of $\mathrm{Gr}_{k}^{\circ}(\Cb^{n})$ by the torus action.
In this paper, however, we work on the parameter space $\mathrm{Gr}_{k}^{\circ}(\Cb^{n})$ itself, as we will discuss its relationship with the matroid strata in \zcref{subsec:matroid_strata}.

\subsection{Non-very generic intersection and hypersurfaces in Grassmannian}
For the remainder of this paper, we do not require generic arrangements $\Ac^0$ to be very generic.
\begin{defi}
  Let $X\in L(\Bc(n,k,\Ac^0))$. We call $D_{T}$ a \emph{component} of $X$ if $\abs{T}\ge k+1$, $X\subset D_{T}$, and $X\not\subset D_{T\cup\{i\}}$ for all $i \in [n]\setminus T$. We define a map
  \begin{align*}
    \Tb:L(\Bc(n,k,\Ac^0)) & \to 2^{2^{[n]}}                                                      \\
    X                     & \mapsto \Tb(X)=\{T\in 2^{[n]}\mid D_{T}\text{ is a component of }X\}
  \end{align*}
  and call $\Tb(X)$ the \emph{canonical presentation} of $X$.
  We call $X\in L(\Bc(n,k,\Ac^0))$ \emph{very generic} if $\codim X=\sum_{T\in\Tb(X)}\codim D_{T}$, and \emph{non-very generic} otherwise.
\end{defi}
Note that $\bigcap_{T\in\Tb(X)}D_T=X$.
Also, if $\Ac^0$ is very generic, then the image of the intersection lattice $L(\Bc(n,k,\Ac^0))$ is isomorphic to $P(n,k)$.
\begin{ex}
  [\cite{Fal94}] Let $\Ac^0$ be the arrangement in $\Cb^{3}$ with normal vectors
  \begin{equation*}
    (\alpha_1,\dots,\alpha_6)=
    \begin{pmatrix}
      1 & 0 & 0 & 1 & 2 & 2 \\
      0 & 1 & 0 & 2 & 3 & 2 \\
      0 & 0 & 1 & 2 & 2 & 1
    \end{pmatrix}.
  \end{equation*}
  The following equality holds:
  \begin{equation*}
    \operatorname{rank}
    \begin{pmatrix}
      \alpha_{1245} \\
      \alpha_{1346} \\
      \alpha_{2356}
    \end{pmatrix}
    =\operatorname{rank}
    \begin{pmatrix}
      2  & 2  & 0  & 2  & -2 & 0  \\
      -2 & 0  & 2  & -2 & 0  & 2  \\
      0  & -2 & -2 & 0  & 2  & -2
    \end{pmatrix}=2< \sum_{i=1}^{3}\codim D_{I}=3.
  \end{equation*}
  and $\Tb(D_{1245}\cap D_{1346}\cap D_{2356})=\{1245,1346,2356\}\notin P(6,3)$. Thus, $D_{1245}\cap D_{1346}\cap D_{2356}$ is a non-very generic intersection, and hence $\Ac^0$ is a non-very generic arrangement.
  This example is illustrated to the right of \zcref{fig:non-very-falk}.
  We can see that the triangular chamber in the center has degenerated compared to the very generic case (left of \zcref{fig:non-very-falk}).
  \begin{figure}[htbp]
    \centering
    \begin{tikzpicture}[scale=1.8]
      \coordinate (0) at (27:1);
      \coordinate (1) at (80:1);
      \coordinate (2) at (167:1);
      \coordinate (3) at (211:1);
      \coordinate (4) at (278:1);
      \coordinate (5) at (317:1);
      \draw ($(0) !1.3! (1)$) -- ($(1) !1.3! (0)$);
      \draw ($(1) !1.3! (2)$) -- ($(2) !1.3! (1)$);
      \draw ($(2) !1.3! (3)$) -- ($(3) !1.3! (2)$);
      \draw ($(3) !1.3! (4)$) -- ($(4) !1.3! (3)$);
      \draw ($(4) !1.3! (5)$) -- ($(5) !1.3! (4)$);
      \draw ($(0) !1.3! (5)$) -- ($(5) !1.3! (0)$);
      \draw ($(4) !1.2! (0)$) -- ($(0) !1.2! (4)$);
      \draw ($(0) !1.2! (2)$) -- ($(2) !1.2! (0)$);
      \draw ($(0) !1.2! (3)$) -- ($(3) !1.25! (0)$) node[pos=1.1] {$1245$};
      \draw ($(1) !1.2! (3)$) -- ($(3) !1.2! (1)$);
      \draw ($(1) !1.2! (4)$) -- ($(4) !1.25! (1)$) node[pos=1.1] {$1346$};
      \draw ($(2) !1.2! (4)$) -- ($(4) !1.2! (2)$);
      \draw ($(1) !1.2! (5)$) -- ($(5) !1.2! (1)$);
      \draw ($(2) !1.2! (5)$) -- ($(5) !1.25! (2)$) node[pos=1.1] {$2356$};
      \draw ($(3) !1.2! (5)$) -- ($(5) !1.2! (3)$);
    \end{tikzpicture}
    \hspace{0.05\columnwidth}
    \begin{tikzpicture}[scale=1.8]
      \coordinate (0) at (30:1);
      \coordinate (1) at (90:1);
      \coordinate (2) at (150:1);
      \coordinate (3) at (210:1);
      \coordinate (4) at (270:1);
      \coordinate (5) at (330:1);
      \draw ($(0) !1.3! (1)$) -- ($(1) !1.3! (0)$);
      \draw ($(1) !1.3! (2)$) -- ($(2) !1.3! (1)$);
      \draw ($(2) !1.3! (3)$) -- ($(3) !1.3! (2)$);
      \draw ($(3) !1.3! (4)$) -- ($(4) !1.3! (3)$);
      \draw ($(4) !1.3! (5)$) -- ($(5) !1.3! (4)$);
      \draw ($(0) !1.3! (5)$) -- ($(5) !1.3! (0)$);
      \draw ($(4) !1.2! (0)$) -- ($(0) !1.2! (4)$);
      \draw ($(0) !1.2! (2)$) -- ($(2) !1.2! (0)$);
      \draw ($(0) !1.2! (3)$) -- ($(3) !1.25! (0)$) node[pos=1.1] {$1245$};
      \draw ($(1) !1.2! (3)$) -- ($(3) !1.2! (1)$);
      \draw ($(1) !1.2! (4)$) -- ($(4) !1.25! (1)$) node[pos=1.1] {$1346$};
      \draw ($(2) !1.2! (4)$) -- ($(4) !1.2! (2)$);
      \draw ($(1) !1.2! (5)$) -- ($(5) !1.2! (1)$);
      \draw ($(2) !1.2! (5)$) -- ($(5) !1.25! (2)$) node[pos=1.1] {$2356$};
      \draw ($(3) !1.2! (5)$) -- ($(5) !1.2! (3)$);
    \end{tikzpicture}
    \caption{Projectivizations of $\Bc(6,3,\Ac^0)$, (left: very generic case, right: Falk's non-very generic case)}
    \label{fig:non-very-falk}
  \end{figure}
\end{ex}

To show \zcref{lem:non-very-3} and \ref{lem:non-very-2}, we want a Manin--Schechtman arrangement $\Bc(n,k,\Ac^0)$ of a given family $\Tb$ of sets that has an intersection $X\in L(\Bc(n,k,\Ac^0))$ at a given rank such that the family $\Tb$ of sets is canonical presentation, i.e., $\Tb=\Tb(X)$. Such a set of arrangements can be described algebraically as follows:
\begin{defi}
  Let $\Tb$ be the antichain of $2^{[n]}$ and satisfy the following conditions:
  \begin{enumerate}[label=(Q\arabic*), ref=Q\arabic*]
    \item\label{item:Q1} each element of $\Tb$ has at least $k + 1$ elements,
    \item\label{item:Q2} each $k$-element subset of $[n]$ is contained in at most one member of $\Tb$.
  \end{enumerate}
  For non-negative integer $r$, we define
  \begin{equation*}
    V^{\circ}_{(\Tb,r)}\coloneq\left\{\Ac^0\in\mathrm{Gr}_{k}^{\circ}(\Cb^{n}) \mathrel{}\middle|\mathrel{}\codim_{\Sb(\Ac^0)}\bigcap_{T\in\Tb}D_{T}(\Ac^0)\le r
    \right\}.
  \end{equation*}
\end{defi}
Obviously, if $X\in L_r(\Bc(n,k,\Ac^0))$, then we have $\Ac^0\in V^{\circ}_{(\Tb(X),r)}$.
Since $V^{\circ}_{(\Tb,r)}$ is defined by linear dependencies, it is a subvariety of $\mathrm{Gr}_{k}^{\circ}(\Cb^{n})$.
The defining equation for $(\Tb,r)=(\{1245,1346,2356\},2)$ is given in \cite{SSY17}.
These varieties with different parameters have also been discussed in \cite{SS25,Sai26}.
We give a sufficient condition for $V^{\circ}_{(\Tb,r)}$ to be a hypersurface. This is a generalization of \cite{SSY17}.

\begin{prop}\label{prop:hypersurface}
  Let $\Tb$ be the antichain of $2^{[n]}$ satisfying \eqref{item:Q1} and \eqref{item:Q2}.
  If $\abs{\bigcup_{T\in \Tb}T}-k=\sum_{T\in \Tb}(\abs{T}-k)=r+1$, then the variety $V^{\circ}_{(\Tb,r)}$ is either empty or a hypersurface in $\mathrm{Gr}_{k}^{\circ}(\Cb^{n})$.
\end{prop}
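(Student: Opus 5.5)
We will describe $V^{\circ}_{(\Tb,r)}$ as the vanishing locus of a single determinant built from the normal spaces $D_T^\perp$. Put $U=\bigcup_{T\in\Tb}T$. Since every $D_T$ with $T\subset U$ contains $D_U$, we may pass to the quotient $\Sb/D_U$, whose dimension is $\codim D_U=\abs{U}-k=r+1$. Each $D_T^\perp$, a subspace of $\Sb^\ast$ of dimension $\abs{T}-k$, lies in $D_U^\perp$. Therefore
\begin{equation*}
  \codim \bigcap_{T\in\Tb} D_T=\dim\sum_{T\in\Tb}D_T^\perp\le r
\end{equation*}
holds exactly when the $r+1$ vectors spanning the subspaces $D_T^\perp$ fail to span the $(r+1)$-dimensional space $D_U^\perp$. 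This is the key dimension count, and it is where the hypothesis $\sum_{T\in\Tb}(\abs{T}-k)=r+1=\abs{U}-k$ enters.

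Next we choose explicit bases that depend polynomially on the Plücker coordinates. For each $T$, fix a $k$-subset $K_T\subset T$. The vectors $\alpha_{K_T\cup\{i\}}$ for $i\in T\setminus K_T$ form a basis of $D_T^\perp$: the $i$-th coordinate of $\alpha_{K_T\cup\{i\}}$ is $\pm\Delta_{K_T}\ne0$ on $\mathrm{Gr}_k^\circ$, and its support is $K_T\cup\{i\}$, so these vectors are independent, and there are $\abs{T}-k$ of them. In the same way, fixing a $k$-subset $K\subset U$ gives a basis $\alpha_{K\cup\{j\}}$, $j\in U\setminus K$, of $D_U^\perp$. Writing each chosen basis vector of $D_T^\perp$ in this basis of $D_U^\perp$ amounts to reading off its coordinates at $j\in U\setminus K$, up to the nonzero factor $\Delta_K$. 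This produces an $(r+1)\times(r+1)$ matrix $M$ whose entries are Plücker coordinates (up to sign) divided by $\Delta_K$. Hence $V^{\circ}_{(\Tb,r)}$ is the zero locus of $F=\Delta_K^{\,r+1}\det M$, a homogeneous polynomial in the Plücker coordinates, restricted to $\mathrm{Gr}_k^\circ(\Cb^n)$. The locus is then either all of $\mathrm{Gr}_k^\circ$, empty, or a hypersurface, since $\mathrm{Gr}_k^\circ$ is irreducible and the zero set of one function on it has pure codimension one when it is neither empty nor everything.

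It remains to exclude $F\equiv0$ on $\mathrm{Gr}_k^\circ$, which we expect to be the main obstacle. This is exactly where \eqref{item:Q2} must be used. Without it, two members of $\Tb$ could share $k$ elements, and their normal spaces could be forced to overlap. For instance, $T\cap T'\supset I$ with $\abs{I}=k+1$ would give $\alpha_I\in D_T^\perp\cap D_{T'}^\perp$ identically, which makes the sum non-direct everywhere.

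Our first approach is to exhibit a single arrangement with $F\ne0$. A very generic $\Ac^0\in\Oc(n,k)$ does this whenever $\Tb\in P(n,k)$, by \zcref{thm:BB-poset}. However, $\Tb$ need not satisfy condition (iii). So instead we argue via a specialization of $\mathrm{Gr}_k$ (a degeneration) where $\det M$ becomes triangular. Order the members $T_1,\dots,T_m$ of $\Tb$, choose the $K_{T_j}$ so that each $T_j$ contributes a new index $i\notin\bigcup_{l<j}T_l$ or a leading term that is nonvanishing, and use \eqref{item:Q2} to guarantee that the leading monomials of the rows are distinct Plücker coordinates. Then $\det M$ has a nonzero leading term.

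If this triangularization fails, the fallback is to argue that $\dim\sum_T D_T^\perp$ is lower semicontinuous. In that case it suffices to find one point of $\mathrm{Gr}_k^\circ$ where $\sum_T D_T^\perp$ is direct, which could be built inductively by adding the members of $\Tb$ one at a time and perturbing. Note that if no such point exists, then $V^\circ_{(\Tb,r)}=\mathrm{Gr}_k^\circ$; the statement's phrase "empty or a hypersurface" implicitly asserts that this case cannot occur, and this is the step we must prove.
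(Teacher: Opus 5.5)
Your reduction to one equation is correct, and it is a cleaner route than the paper's. Every chosen basis vector lies in $D_U^\perp$, and the coordinate projection $D_U^\perp\to\Cb^{U\setminus K}$ sends $\alpha_{K\cup\{j\}}$ to $\pm\Delta_K e_j$. So the single $(r+1)$-minor on the columns $U\setminus K$ already detects the rank. The paper instead adjoins an auxiliary row $\alpha_{K'}$ ($K'\subset U$, $\abs{K'}=k+1$) and uses an exchange argument to show that the vanishing of one $(r+1)$-minor forces all of them to vanish.

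The gap is the remaining step $F\not\equiv0$. It cannot be closed from \eqref{item:Q1}, \eqref{item:Q2} and the total count, because with only these hypotheses it is false. Take $k=2$, $n=10$, and let $\Tb$ consist of the seven lines $124,235,346,457,156,267,137$ of the Fano plane on $[7]$, together with $\{8,9,10\}$. This is an antichain, every pair lies in at most one member, and $\abs{\bigcup_T T}-k=8=\sum_T(\abs{T}-k)$, so $r=7$. But for every $\Ac^0\in\mathrm{Gr}_2^\circ(\Cb^{10})$ the seven Fano vectors $\alpha_T$ lie in $D_{[7]}^\perp$, which has dimension $5$. Hence $\dim\sum_T D_T^\perp\le 6\le r$, and $V^\circ_{(\Tb,7)}=\mathrm{Gr}_2^\circ(\Cb^{10})$. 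So neither a triangularizing degeneration nor a perturbative induction can work. The reason is that \eqref{item:Q2} only controls pairs (it gives $\abs{T\cup T'}-k>(\abs{T}-k)+(\abs{T'}-k)$), whereas the obstruction here comes from a seven-member subfamily.

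The missing ingredient is a condition on all subfamilies,
\[
\abs{\textstyle\bigcup_{T\in\Tb'}T}-k\ \ge \sum_{T\in\Tb'}(\abs{T}-k)\qquad\text{for every nonempty }\Tb'\subseteq\Tb,
\]
which the same dimension count shows is necessary. It has to be combined with the tool you set aside. You are right that $\Tb\notin P(n,k)$ (it never is when $\abs{\Tb}\ge2$, since the total count is an equality), but \zcref{thm:BB-poset} is used differently:
\begin{enumerate}
  \item At a very generic $\Ac^0$, the intersection $X=\bigcap_T D_T$ corresponds to some $\Tb(X)\in P(n,k)$.
  \item By \zcref{prop:localization-direct-sum}, each $T\in\Tb$ lies in exactly one $T'\in\Tb(X)$, each $T'$ is the union of the $T$ it contains, and $\sum_T D_T^\perp$ splits along this partition.
  \item Induction on $\abs{\Tb}$, using the displayed inequality, shows the sum is direct. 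So it has dimension $r+1$ and $F\ne0$.
\end{enumerate}
The paper's proof asserts exactly this, namely that $\bigcap_T D_T$ corresponds to $D_U$ for very generic $\Ac^0$. However, it justifies this only by the total count, so it has the same hole. The families $\Tb_n$ and $\Tb_c$ used in \zcref{lem:non-very-3} and \zcref{lem:non-very-2} do satisfy the subfamily condition, even strictly for proper subfamilies with at least two members. Those applications are therefore unaffected.
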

\begin{proof}
  Consider the subspace $\Sb(\Ac^0)^{\perp}_{\Tb}\coloneq \left(\bigcap_{T\in\Tb}D_{T}\right)^{\perp}=\sum_{T\in\Tb}D_{T}^{\perp}$ of $\Sb(\Ac^0)^{\ast}$, which is spanned by
  \begin{equation*}
    \left\{\alpha_{I}\mathrel{}\middle|\mathrel{}I\in \binom{T}{k+1}, T\in\Tb\right\}.
  \end{equation*}
  By definition, $\Ac^0\in V^{\circ}_{(\Tb,r)}$ if and only if $\codim \bigcap_{T\in \Tb}D_{T}\le r$, which is equivalent to $\dim \Sb(\Ac^0)^{\perp}_{\Tb}\le r$.
  Let $A_{\Tb}(\Ac^0)$ be an $(r+1)\times n$ matrix whose rows consist of normal vectors
  \begin{equation*}
    \bigcup_{T\in \Tb}\{\alpha_{I(T,1)},\dots, \alpha_{I(T,\abs{T}-k)}\},
  \end{equation*}
  where ${I(T,j)}\coloneq\{i_{1},\dots,i_{k},i_{k+j}\}\subset [n]$ for $T = \{i_{1}<\dots<i_{\abs{T}}\}$ and $1\le j\le \abs{T}-k$.
  Since the row space of $A_{\Tb}(\Ac^0)$ is $\Sb(\Ac^0)^{\perp}_{\Tb}$, it is enough to show $\operatorname{rank}A_{\Tb}(\Ac^0)\le r$ if and only if the vanishing of the determinant of a single $(r+1)$-minor.
  Note that the columns not indexed by subsets of $\bigcup_{T\in\Tb}T$ are zero vectors.
  Thus, $\operatorname{rank}A_{\Tb}(\Ac^0)\le r$ if and only if all $(r+1)$-minors of $A_{\Tb}(\Ac^0)$ vanish.
  Note that, if $\Ac^0$ is very generic, then the intersection $\bigcap_{T\in\Tb}D_T$ corresponds to $D_{\bigcup_{T\in \Tb}T}$ in the Bayer--Brandt--Athanasiadis description (\zcref{thm:BB-poset}), because $\sum_{T\in\Tb}(\abs{T}-k)=\abs{\bigcup_{T\in \Tb}T}-k$, which implies $\dim \Sb(\Ac^0)^{\perp}_{\Tb}= r+1$.
  Thus, an $(r+1)$-minor of $A_{\Tb}(\Ac^0)$ is not identically zero.

  We show that if one of the $(r+1)$-minors of $A_{\Tb}(\Ac^0)$ whose columns are indexed by subsets of $\bigcup_{T\in \Tb}T$ vanishes, then all $(r+1)$-minors of $A_{\Tb}(\Ac^0)$ vanish.
  Let $S_{1}$ be a subset of $\bigcup_{T\in\Tb}T$ of cardinality $r+1$.
  Let $S_{2}=S_{1}\cup \{i_{2}\}\setminus \{i_{1}\}$ for arbitrary choices of $i_{1}\in S_{1}$ and $i_{2}\in \bigcup_{T\in\Tb}T\setminus S_{1}$, and let $S_0=S_{1}\cup\{i_{2}\}=S_{2}\cup\{i_{1}\}$.
  Then we have $\abs{S_0}=r+2$.

  Suppose that the minor $\det (A_{\Tb}(\Ac^0)|_{S_1})$ of $A_{\Tb}(\Ac^0)$ whose columns are indexed by $S_{1}$ vanishes.
  If we take $K=(\bigcup_{T\in \Tb}T\setminus S_{0})\cup\{i_{1},i_2\}\subset \bigcup_{T\in \Tb}T$, then it holds that $\abs{K}=\abs{\bigcup_{T\in \Tb}T}-r=k+1$ and $\alpha_K\in D^\perp_{\bigcup_{T\in \Tb}T}$.
  Since $\Sb(\Ac^0)^{\perp}_{\Tb}\subset D^\perp_{\bigcup_{T\in \Tb}T}$ and $\dim D^\perp_{\bigcup_{T\in \Tb}T} = r+1$, the rank of an extended matrix $\begin{pmatrix} A_{\Tb}(\Ac^0) \\ \alpha_{K} \end{pmatrix}$ is equal to or less than $r+1$, and hence all $(r+2)$-minors vanish.
  In particular, the minor of $\begin{pmatrix} A_{\Tb}(\Ac^0) \\ \alpha_{K} \end{pmatrix}$ corresponding to the columns indexed by $S_0$ vanishes.
  Since $S_0\cap K=\{i_1,i_2\}$, the $(r+2)$-minor of the extended matrix corresponding to the columns indexed by $S_0$ vanishes, that is $c_{K,i_2}\det (A_{\Tb}(\Ac^0)|_{S_1})\pm c_{K,i_1}\det (A_{\Tb}(\Ac^0)|_{S_2})=0$, where $c_{K,i_1},c_{K,i_2}$ are the $i_{1}$-th and $i_{2}$-th entries of $\alpha_{K}$
  Since $\Ac^0\in \mathrm{Gr}_{k}^{\circ}(\Cb^{n})$, both $c_{K,i_1}$ and $c_{K,i_2}$ are non-zero, and hence we obtain $\det (A_{\Tb}(\Ac^0)|_{S_2})=0$.

  By the arbitrariness of $S_{1},i_{1},i_{2}$ and by repeating this argument, we conclude that all $(r+1)$-minors vanish.
  Therefore $V^\circ_{(\Tb,r)}$ is either empty or a hypersurface in $\mathrm{Gr}_{k}^{\circ}(\Cb^{n})$.
\end{proof}

\subsection{Proof of \zcref{lem:non-very-3} and \zcref{lem:non-very-2}}
The determinant of the matrix $(\alpha_{i_1}\dots\alpha_{i_k})$ will be denoted by $\Delta_{i_1\dots i_k}$.

\begin{proof}[Proof of \zcref{lem:non-very-3}]
  Let $I_1\coloneq [n]\setminus\{1,2\}$, $I_2\coloneq [n]\setminus\{3,4 \}$, $I_3\coloneq [n]\setminus\{5,6\}$, $\Tb_{n}\coloneq\{I_1,I_2,I_3\}$, and $\eta_{n}\coloneq[n]\setminus[6]$.
  Then $\Tb_n$ satisfies \eqref{item:Q1} and \eqref{item:Q2} and we have
  \begin{equation*}
    \begin{pmatrix}
      \alpha_{I_1} \\
      \alpha_{I_2} \\
      \alpha_{I_3}
    \end{pmatrix}=
    \begin{pmatrix}
      0                  & 0                   & \Delta_{456\eta_n} & -\Delta_{356\eta_n} & \Delta_{346\eta_n} & -\Delta_{345\eta_n} & \Delta_{[n]\setminus\{1,2,7\}} & \cdots \\
      \Delta_{256\eta_n} & -\Delta_{156\eta_n} & 0                  & 0                   & \Delta_{126\eta_n} & -\Delta_{125\eta_n} & \Delta_{[n]\setminus\{3,4,7\}} & \cdots \\
      \Delta_{234\eta_n} & -\Delta_{134\eta_n} & \Delta_{124\eta_n} & -\Delta_{123\eta_n} & 0                  & 0                   & \Delta_{[n]\setminus\{5,6,7\}} & \cdots
    \end{pmatrix}.
  \end{equation*}
  Since $\sum_{T\in \Tb_n}(\abs{T}-k)=\abs{\bigcup_{T\in \Tb_n}T}-k=3$, we can apply \zcref{prop:hypersurface}, and we have
  \begin{equation*}
    V^\circ_{(\Tb_{n},2)}=\{\Ac^0\in \mathrm{Gr}_{k}^{\circ}(\Cb^{n})\mid \Delta_{125\eta_n}\Delta_{346\eta_n}-\Delta_{126\eta_n}\Delta_{345\eta_n}=0\}
  \end{equation*}
  by calculation of minors for columns of $4$, $5$, and $6$.

  The defining equation of $V^{\circ}_{(\Tb_n,2)}$ is linear in the column $\alpha_1$ after fixing the other columns.
  Moreover, this nonzero linear form is not proportional to any Plücker coordinate $\Delta_{1J}$ with $J\in\binom{[n]\setminus\{1\}}{k-1}$.
  Hence, its zero set is not contained in the union of the hyperplanes $\{\Delta_{1J}=0\}$.
  Since $\Rb$ is an infinite field, we may choose a real point on this zero set satisfying all genericity conditions.
  Thus, we can take $\Ac^0\in V^{\circ}_{(\Tb_n,2)}\cap \mathrm{Gr}_{k}(\Rb^{n})$ and there are three distinct hyperplanes $D_{I_1},D_{I_2},D_{I_3}\in \Bc(n,k,\Ac^0)$ such that
  \begin{equation*}
    \codim \left( D_{I_1}\cap D_{I_2}\cap D_{I_3} \right) = 2 .
  \end{equation*}

  We may also require that no further hyperplane contains the intersection $D_{I_1}\cap D_{I_2}\cap D_{I_3}$.
  Indeed, for each $I\in\binom{[n]}{k+1}\setminus\{I_1,I_2,I_3\}$, the condition $D_{I_1}\cap D_{I_2}\cap D_{I_3}\subset D_I$ is equivalent to $\alpha_I\in\operatorname{span}(\alpha_{I_1},\alpha_{I_2},\alpha_{I_3})$.
  This defines a proper Zariski closed subset of $V^\circ_{(\Tb_n,2)}$.
  Hence, since there are only finitely many such $I$, we can choose a real point of $V^\circ_{(\Tb_n,2)}$ satisfying the genericity conditions and avoiding all these closed subsets.
  For this choice, we have
  \begin{equation*}
    \Bc(n,k,\Ac^0)_{D_{I_1}\cap D_{I_2}\cap D_{I_3}}=\{D_{I_1},D_{I_2},D_{I_3}\}.\qedhere
  \end{equation*}
\end{proof}

\begin{proof}[Proof of \zcref{lem:non-very-2}]
  We define a map $c:[r-1]\to\{r,r+1,r+2\}$ as follows:
  \begin{itemize}
    \item if $r-1$ is odd, then
          \begin{align*}
            c(1) & =r,                                \\
            c(i) & =r+1\quad (i\ge 2\text{ is even}), \\
            c(i) & =r+2\quad (i\ge 3\text{ is odd});
          \end{align*}

    \item if $r-1=4$, then $c(1)=c(3)=5$, $c(2)=6$, and $c(4)=7$;

    \item if $r-1\ge 6$ is even, then
          \begin{align*}
            c(1)=c(3) & =r,                                \\
            c(i)      & =r+1\quad (i\ge 2\text{ is even}), \\
            c(i)      & =r+2\quad (i\ge 5\text{ is odd}).
          \end{align*}
  \end{itemize}
  Let $\eta_{n}\coloneq [n]\setminus [r+2]=\{r+3,\dots,n\}$.
  We define
  \begin{align*}
    I^{c}_{i}   & \coloneq \{i,i+1,c(i)\}\sqcup \eta_{n}\qquad (1\le i\le r-2), \\
    I^{c}_{r-1} & \coloneq \{r-1,1,c(r-1)\}\sqcup \eta_{n},                     \\
    I^{c}_{r}   & \coloneq \{r,r+1,r+2\}\sqcup \eta_{n}.
  \end{align*}
  and $\Tb_{c}\coloneq\{ I_{i}^{c}\mid 1\le i\le r\}$. Note that $I^{c}_{i}\in\binom{[n]}{k+1}$ and $\abs{I^c_i\cap I^c_j}\le k-1$ for any distinct $i,j$.
  Thus, $\Tb_c$ satisfies \eqref{item:Q1} and \eqref{item:Q2} and we can define $V_{(\Tb_c,r-1)}^\circ$.
  Since $\sum_{T\in \Tb_c}(\abs{T}-k)=\abs{\bigcup_{T\in\Tb_c}T}-k$, we have $V_{(\Tb_c,r-1)}^\circ\neq \operatorname{Gr}_k^\circ(\Cb^n)$ by \zcref{thm:BB-poset}.

  The $r\times r$ submatrix $A_{\Tb_c}|_{[r]}$ of $A_{\Tb_c}(\Ac^0)=(\alpha_{I^c_i})_{1\le i\le r}$ consisting of the first $r$ columns is as follows:
  \begin{equation*}
    {\hspace{-3em}
      \begin{pmatrix}\Delta_{2c(1)\eta_n}&-\Delta_{1c(1)\eta_n}&0&&&\cdots&0&* \\ 0&\Delta_{3c(2)\eta_n}&-\Delta_{2c(2)\eta_n}&0&&\cdots&0&* \\ 0&0&\Delta_{4c(3)\eta_n}&-\Delta_{3c(3)\eta_n}&0&\cdots&0&* \\ \vdots&\vdots&\ddots&\ddots&\ddots&\ddots&\vdots&\vdots \\ \vdots&\vdots&&\ddots&\ddots&\ddots&0&* \\ 0&0&\cdots&&0&\Delta_{r-1\ c(r-2)\eta_n}&-\Delta_{r-2\ c(r-2)\eta_n}&* \\ \Delta_{r-1\ c(r-1)\eta_n}&0&\cdots&&&0&-\Delta_{1c(r-1)\eta_n}&* \\ 0&0&\cdots&&&&0&\Delta_{r+1\ r+2\ \eta_n}\end{pmatrix}}.
  \end{equation*}
  Thus, the defining equation of $V^{\circ}_{(\Tb_c,r-1)}$ is
  \begin{equation}\label{eq:defining-eq-of-Tc}
    \det A_{\Tb_c}|_{[r]}/\Delta_{r+1\ r+2\ \eta_n}
    =-\left(\prod_{i=1}^{r-2}\Delta_{i+1\ c(i)\eta_n}\right)\Delta_{1\ c(r-1)\eta_n}+ \left(\prod_{i=1}^{r-1}\Delta_{i\ c(i)\eta_n}\right)
  \end{equation}
  and this is non-zero by $V_{(\Tb_c,r-1)}^\circ\neq \operatorname{Gr}_k^\circ(\Cb^n)$.
  Since this is linear in $\alpha_1$, after fixing the other normal vectors generically, as in the discussion of \zcref{lem:non-very-3}, this has a real solution, which implies $V^{\circ}_{(\Tb_c,r-1)}\cap \operatorname{Gr}^\circ_{k}(\Rb^n)\neq \emptyset$.
  Thus, we can take $\Ac^0\in V^{\circ}_{(\Tb_c,r-1)}\cap \mathrm{Gr}_{k}(\Rb^{n})$.
  For this choice, the determinant vanishes, hence the corresponding hyperplanes $D_{I^c_1},\dots,D_{I^c_r}\in \Bc(n,k,\Ac^0)$ satisfy $\codim \bigcap_{j=1}^{r}D_{I^c_j}\le r-1$.

  Next, we show that every proper subset is linearly independent.
  By the form of $A_{\Tb_c}|_{[r]}$, if we delete any one row indexed by $I^c_i$ for $i\in [r-1]$, then the remaining $(r-1)\times (r-1)$ submatrix is triangular with nonzero diagonal entries after a suitable reordering of the rows and columns.
  Thus, the vectors $\alpha_{I^c_j}$ ($j\in[r]\setminus\{i\}$) are linearly independent for any $1\le i\le r-1$.
  This implies the rank of $\{\alpha_{I^c_1},\dots,\alpha_{I^c_r}\}$ is at least $r-1$.
  Thus, we have $\codim \bigcap_{j=1}^{r}D_{I^c_j}= r-1$.

  By definition of $c$, if $r-1$ is odd, then the nonzero entry in the $r$-th column of $A_{\Tb_c\setminus\{ I^c_r\}}$ is unique, and thus the rank of $A_{\Tb_c\setminus\{ I^c_r\}}$ is $r-1$.
  Suppose $r-1$ is even. The condition $\operatorname{rank}A_{\Tb_c\setminus\{ I^c_r\}}= r-2$ holds if and only if there is a unique nonzero vector $d=(d_{1},\dots,d_{r-1})$, up to nonzero scalar multiplication, such that $\sum_{i=1}^{r-1}d_{i}\alpha_{I^c_i}=0$.
  In particular, focusing on the second, third, and the $r$-th columns, we have
  \begin{align*}
    -d_{1}\Delta_{1c(1)\eta_n}+d_{2}\Delta_{3c(2)\eta_n}=-d_{2}\Delta_{2c(2)\eta_n}+d_{3}\Delta_{4c(3)\eta_n}=d_{1}\Delta_{12\eta_n}+d_{3}\Delta_{34\eta_n}=0,
  \end{align*}
  and hence, by eliminating the $d_{i}$, the following relation holds
  \begin{equation*}
    \Delta_{3c(2)\eta_n}\Delta_{4c(3)\eta_n}\Delta_{12\eta_n}-\Delta_{1c(1)\eta_n}\Delta_{2c(2)\eta_n}\Delta_{34\eta_n}=0.
  \end{equation*}
  On the other hand,
  The above equation is not a consequence of \eqref{eq:defining-eq-of-Tc}: in the Pl\"ucker grading, the index $r+2$ occurs only in \eqref{eq:defining-eq-of-Tc}.
  Hence a generic point of $V^\circ_{(\Tb_c,r-1)}$ avoids this additional equation.
  Thus, they also satisfy $\codim \bigcap_{j\in J}D_{I^c_j}=\abs{J}$ for any proper subset $J\subsetneq [r]$.

  We further choose $\Ac^0\in V^{\circ}_{(\Tb_c,r-1)}\cap \mathrm{Gr}_{k}(\Rb^{n})$ so that no hyperplane other than $D_{I^c_1},\dots,D_{I^c_r}$ contains the intersection $\bigcap_{j=1}^r D_{I^c_j}$.
  For each $I\in\binom{[n]}{k+1}\setminus\{I^c_1,\dots,I^c_r\}$, the condition $\bigcap_{j=1}^r D_{I^c_j}\subset D_I$ is equivalent to $\alpha_I\in\operatorname{span}(\alpha_{I^c_1},\dots,\alpha_{I^c_r})$.
  This defines a proper Zariski closed subset of $V^\circ_{(\Tb_c,r-1)}$.
  Since there are only finitely many such $I$, we may choose a real point of $V^\circ_{(\Tb_c,r-1)}\cap\operatorname{Gr}_k^\circ(\Rb^n)$ avoiding all of them.

  Combining these parts, we conclude that the arrangement $\Ac^0$ has the required property.
\end{proof}

\subsection{A connection to matroid strata}\label{subsec:matroid_strata}
In this final section, we discuss the relationship between the subvarieties $V^\circ_{(\Tb,r)}$ and matroid strata, also called realization spaces of matroids.
The content of this section consists of further remarks and is independent of the proof of the main result.
We show that, under certain conditions, an appropriate projection of a matroid stratum coincides with $V^\circ_{(\Tb,r)}$.
The basic idea is to treat the Manin--Schechtman arrangement as an induced arrangement in a fiber of a vector bundle over the Grassmannian.
This appears in \cite{Fal94, ABFKST23}, but we recall it here.

\subsubsection{Affine hyperplane arrangements}\label{subsubsec:affine}
We first consider the moduli of affine arrangements.
As in the central case, an essential affine arrangement with $n$ hyperplanes in $\Cb^k$ corresponds to a $k$-dimensional affine subspace of $\Cb^n$.
For an essential affine arrangement $\Ac^t=\{H_1^{t_1},\dots,H_n^{t_n}\}$ in $\Cb^k$, we write
\begin{equation*}
  W(\Ac^t)\coloneq\operatorname{Im}(\alpha_1,\dots,\alpha_n)-t\subset\Cb^n.
\end{equation*}
Then $W(\Ac^t)\cap\{x_i=0\}$ corresponds to the affine hyperplane $H_i^{t_i}=\alpha_i^{-1}(t_i)$ in $W(\Ac^t)$.
Since $\Ac^t$ is essential, $\alpha=(\alpha_1,\ldots,\alpha_n)$ is injective, and hence $W(\Ac^t)$ is a $k$-dimensional affine subspace of $\Cb^n$.
Conversely, a $k$-dimensional affine subspace $W\subset\Cb^n$ determines an affine hyperplane arrangement in $W$ by intersecting it with the coordinate hyperplanes $\{x_i=0\}$.
Thus, the moduli space of affine arrangements consisting of $n$ hyperplanes in $\Cb^k$ is identified with the space of $k$-dimensional affine subspaces of $\Cb^n$, which we denote by $\operatorname{GrAff}_{k}(\Cb^n)$.

The \emph{cone} over $\Ac^t$ is the central arrangement $c\Ac^t=\{cH_1,\ldots,cH_n,H_{n+1}\}$, where $cH^{t_i}_i\coloneq\{(x,x_{n+1})\in\Cb^{k+1}\mid \alpha_i(x)-t_i x_{n+1}=0\}$ and $H_{n+1}=\{(x,x_{n+1})\in\Cb^{k+1}\mid x_{n+1}=0\}$.
This operation corresponds to the embedding $\operatorname{GrAff}_{k}(\Cb^n)\hookrightarrow \operatorname{Gr}_{k+1}(\Cb^{n+1})$ given by $W\mapsto \widehat W\coloneq\operatorname{span}\{(w,1)\in\Cb^{n+1}\mid w\in W\}$.
Its image is $\{\widehat{W}\in \operatorname{Gr}_{k+1}(\Cb^{n+1})\mid \widehat{W}\not\subset \xi_{n+1}\}$,
where $\xi_{n+1}\coloneq\{x\in\Cb^{n+1}\mid x_{n+1}=0\}$.

We define the map
\begin{equation*}
  \gamma\colon\operatorname{GrAff}_{k}(\Cb^n)\to \operatorname{Gr}_{k}(\Cb^n)
\end{equation*}
by $\gamma(W)\coloneq \widehat W\cap \xi_{n+1}$ where we identify $\Cb^n\cong \xi_{n+1}\subset\Cb^{n+1}$.
Since $W(\Ac^t)=-t+W(\Ac)$, we have
\begin{equation*}
  \gamma\bigl(W(\Ac^t)\bigr)=\widehat{W(\Ac^t)}\cap \xi_{n+1}=W(\Ac).
\end{equation*}
Thus $\gamma$ sends an affine arrangement to its underlying central arrangement.
Note that, $\gamma$ corresponds to taking the restriction $c\Ac^{H_{n+1}}$ of $c\Ac^t$ to $H_{n+1}$, i.e. $\gamma(W(\Ac^t))={W(c\Ac^t)}\cap \xi_{n+1}=W((c\Ac^t)^{H_{n+1}})$.

\begin{prop}
  The map $\gamma\colon\operatorname{GrAff}_{k}(\Cb^n)\longrightarrow\operatorname{Gr}_{k}(\Cb^n)$ is a vector bundle.
  More precisely, if $\Sc$ denotes the tautological subbundle on $\operatorname{Gr}_{k}(\Cb^n)$, then $\gamma$ is identified with the total space of the quotient bundle $\Cb^n/\Sc$.
\end{prop}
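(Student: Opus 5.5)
The plan is to exhibit the bundle structure directly: describe the fibre of $\gamma$ over each $W\in\operatorname{Gr}_k(\Cb^n)$, then check local triviality with linear transition maps.

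For the fibre, take a $k$-dimensional affine subspace $A\subset\Cb^n$. Its direction space is $\gamma(A)=\widehat A\cap\xi_{n+1}$. Indeed, $\widehat A$ is spanned by the vectors $(a,1)$ with $a\in A$. A vector $\sum c_j(a_j,1)$ lies in $\xi_{n+1}$ exactly when $\sum c_j=0$, and then its first component $\sum c_ja_j$ is a difference vector of $A$. Hence $\gamma(A)=A-A$, the translation space of $A$, and it has dimension $k$. So $\gamma^{-1}(W)$ is the set of affine subspaces parallel to $W$, i.e. the cosets $v+W$ with $v\in\Cb^n$. These are in canonical bijection with $\Cb^n/W$ via $v+W\mapsto[v]$. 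This matches the earlier formula $W(\Ac^t)=-t+W(\Ac)$, where the translate is recorded by the class of $-t$ modulo $W(\Ac)$. We therefore get a set-theoretic bijection $\operatorname{GrAff}_k(\Cb^n)\cong\{(W,\bar v)\mid W\in\operatorname{Gr}_k(\Cb^n),\ \bar v\in\Cb^n/W\}$, which is the total space of $\Cb^n/\Sc$, and under it $\gamma$ becomes the bundle projection.

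For local triviality, cover $\operatorname{Gr}_k(\Cb^n)$ by the standard affine charts $U_J=\{W\mid W\cap\Cb^{[n]\setminus J}=0\}$, indexed by $J\in\binom{[n]}{k}$. On $U_J$ every $W$ is a graph over the coordinates $J$. Each coset $v+W$ therefore contains a unique point whose $J$-coordinates vanish, and this gives an identification $\Cb^n/W\cong\Cb^{[n]\setminus J}$. That point depends algebraically on $(W,v)$ and linearly on $v$, so the transition maps between charts are fibrewise linear. This is exactly the standard trivialization of the quotient bundle.

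The main obstacle is showing that the identification is an isomorphism of varieties (or complex manifolds), not just a bijection. For this I would use the cone embedding $\operatorname{GrAff}_k(\Cb^n)\hookrightarrow\operatorname{Gr}_{k+1}(\Cb^{n+1})$, whose image is the open set $\{\widehat W\not\subset\xi_{n+1}\}$. On that set, the map $\widehat W\mapsto\widehat W\cap\xi_{n+1}$ is algebraic, since intersecting with a fixed hyperplane transversally is algebraic. The inverse $(W,\bar v)\mapsto\widehat{v+W}=W\oplus\Cb(v,1)$ is also algebraic in local coordinates. In the chart $J\cup\{n+1\}$ of $\operatorname{Gr}_{k+1}(\Cb^{n+1})$, the coordinates of $\widehat{v+W}$ are the chart coordinates of $W$ together with the normalized representative $v$, with vanishing $J$-part. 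So the chart on $\operatorname{Gr}_{k+1}$ restricts to the product chart $U_J\times\Cb^{n-k}$, and this establishes the isomorphism.
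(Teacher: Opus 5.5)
Your proposal is correct and follows the same route as the paper: you identify $\gamma(A)$ with the direction space of $A$, and the fibre $\gamma^{-1}(W)$ with $\Cb^n/W$ via $v+W\mapsto[v]$. Your chart computation identifies the chart $J\cup\{n+1\}$ of $\operatorname{Gr}_{k+1}(\Cb^{n+1})$ with $U_J\times\Cb^{n-k}$ through the normalized representative, and this supplies the local triviality and algebraicity that the paper's purely fibrewise argument leaves implicit.
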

\begin{proof}
  A $k$-dimensional affine subspace $W\subset\Cb^n$ is determined by its direction $\gamma(W)\in\operatorname{Gr}_{k}(\Cb^n)$ and a translation parameter in $\Cb^n/\gamma(W)$.
  Indeed, if $W=v+\gamma(W)$, then $v$ is defined only modulo $\gamma(W)$.
  Hence the fiber over $W\in\operatorname{Gr}_{k}(\Cb^n)$ is $\gamma^{-1}(W)\simeq \Cb^n/W$, which is the fiber of $\Cb^n/\Sc$ over $W$.
  Therefore, $\gamma$ is a vector bundle of rank $n-k$.
\end{proof}

\subsubsection{Matroids and their strata}\label{subsubsec:matroid}
We recall the description of matroids in terms of flats (cf. \cite{Oxl11}) and matroid strata (cf. \cite{GGMS87,BLVS99}).
A \emph{matroid} $M=([n],\Fc)$ is a pair consisting of a finite set $[n]$, called the \emph{ground set}, and a family $\Fc=\Fc(M)\subset 2^{[n]}$ of subsets of $[n]$ satisfying:
\begin{itemize}
  \item $[n]\in \Fc$,
  \item $F_1,F_2\in\Fc$ implies $F_1\cap F_2\in \Fc$,
  \item If $F\in \Fc$ and $\{F_1,F_2,\dots,F_m\}$ is the set of flats covering $F$ with respect to inclusion, then $\{F_1\setminus F,F_2\setminus F,\dots,F_m\setminus F\}$ forms a partition of $[n]\setminus F$.
\end{itemize}
For a matroid $M=([n],\Fc)$, elements of $\Fc$ are called flats.
The family of flats of a matroid has a structure of the lattice ordered by inclusion.

The lattice $\Fc(M)$ of flats of a matroid $M$ is an abstraction of the intersection lattice of a hyperplane arrangement.
Indeed, if we set $\Fc_\Ac=\{F\mid \{H_i\mid i\in F\}=\Ac_X, X\in L(\Ac)\}$ for a central hyperplane arrangement $\Ac=\{H_1,\dots,H_n\}$, then $([n],\Fc_\Ac)$ is a matroid.
A matroid obtained from a hyperplane arrangement in this way is called \emph{linear}.
A matroid isomorphic to a linear matroid is called \emph{representable}.
As another example, the \emph{uniform} matroid $U_{k,n}$ on $n$ elements is defined by $\Fc(U_{k,n})=\{F\in 2^{[n]}\mid \abs{F}\le k-1\}\cup \{[n]\}$, which corresponds to the generic arrangement consisting of $n$ hyperplanes in $\Cb^k$.
A matroid $M$ on $[n]$ is \emph{paving} if $\operatorname{cl}_M S\in \Hc(M)$ for any $S\in \binom{[n]}{r(M)-1}$ where $\operatorname{cl}_M S\coloneq\bigcap_{\substack{F\in \Fc,\ F\supset S}}F$.

Let $\Hc=\Hc(M)$ be the set of maximal elements of $\Fc\setminus \{[n]\}$ with respect to inclusion for a matroid $M=([n],\Fc)$.
The elements of $\Hc$ are usually called hyperplanes of $M$, but in this paper, to avoid confusion, we call them \emph{coatoms} of $M$.
Note that the set of coatoms of a matroid uniquely determines the matroid.
The rank $r(M)$ of $M$ is defined to be the length of a maximal chain $F_0\subset F_1\subset\dots\subset F_{r(M)}$ of flats of $M$.

The contraction $M/i$ of $M$ by $i\in [n]$ is the matroid on $[n]\setminus\{i\}$ whose family of flats is $\Fc(M/i)=\{F\setminus \{i\}\mid i\in F\in \Fc\}$.
The contraction of matroids corresponds to the restriction of hyperplane arrangements, provided that restrictions are regarded as indexed arrangements.
That is, for a hyperplane arrangement $\Ac=\{H_1,\dots,H_n\}$, we have $([n-1],\Fc_{\Ac^{H_{n}}})=([n],\Fc_\Ac)/n$.

For a rank $k$ matroid $M$ on $[n]$, the \emph{matroid stratum} of $M$ is
\begin{equation*}
  R_M\coloneq\{W(\Ac)\in\operatorname{Gr}_k(\Cb^n)\mid  ([n],\Fc_\Ac)=M\}.
\end{equation*}
These strata are often referred to as realization spaces of matroids in the Grassmannian.

\subsubsection{Matroid strata and the essentialization of the Manin--Schechtman arrangement}
Fix a point $\Ac\in\operatorname{Gr}_{k}(\Cb^n)$ and assume that $\Ac$ is generic.
Then we have
\begin{equation*}
  \gamma^{-1}(\Ac)=\gamma^{-1}(W(\Ac))\simeq\Cb^n/W(\Ac).
\end{equation*}
On the other hand, we have $\Sb(\Ac)\simeq \Cb^n$.
If $t$ and $t+w$ differ by an element $w\in W(\Ac)$, then the corresponding affine subspaces $W(\Ac^t)=-t+W(\Ac)$, and $ W(\Ac^{t+w})=-(t+w)+W(\Ac)$ coincide.
Thus, on the fiber of the affine Grassmannian, $t$ is recorded only as an element of $\Cb^n/W(\Ac)\simeq \Sb(\Ac)/D_{[n]}$.

Let $R_M$ denote the matroid stratum in $\operatorname{Gr}_{k+1}(\Cb^{n+1})$ corresponding to a matroid $M$ of rank $k+1$ on $[n+1]$.
Via the above embedding, $\operatorname{GrAff}_{k}(\Cb^n)$ is an open subvariety of $\operatorname{Gr}_{k+1}(\Cb^{n+1})$.
Thus, we can consider $\gamma^{-1}(\Ac)\cap R_M$ for each $M$.
This is the set of affine arrangements $\Ac^t$ with fixed underlying central arrangement $\Ac$.

Since $\Ac$ is generic, a general point in the fiber $\gamma^{-1}(\Ac)$ corresponds to the uniform matroid $U_{k+1,n+1}$ after coning.
Hence, a general point belongs to the maximal matroid stratum.
Points outside the maximal stratum correspond to non-generic central arrangements $c\Ac^t=\{cH_1,\ldots,cH_n,H_{n+1}\}$.
More concretely, since $\Ac$ is generic, the minimal degeneracy condition is that there exist $k+1$ affine hyperplanes passing through a common point.
For each such subset, this condition is given by a linear equation in $t\in\Cb^n$.
The hyperplane arrangement in $\Cb^n$ obtained from these linear equations is precisely the Manin--Schechtman arrangement of $\Ac$, as defined in \zcref{sec:MSnk-BBA-poset}.

Moreover, each defining equation of this Manin--Schechtman arrangement is invariant under translations in the direction $W(\Ac)$.
Indeed, any $w\in W(\Ac)$ can be written as $w=(\alpha_1(x),\ldots,\alpha_n(x))$ for some $x\in\Cb^k$.
Replacing $t$ by $t+w$ corresponds to translating all hyperplanes $H_i^{t_i}$ simultaneously by $w$.
Hence, their intersection poset does not change.
The arrangement obtained on the quotient $\gamma^{-1}(\Ac)\simeq \Cb^n/W(\Ac)\simeq \Sb(\Ac)/D_{[n]}$ is the essentialization of the Manin--Schechtman arrangement.

Consequently, the codimension-one part of the stratification induced on the fiber $\gamma^{-1}(\Ac)$ by the intersections with the matroid strata agrees with the essentialization of the Manin--Schechtman arrangement of $\Ac$.

\begin{prop}
  Let $\Ac\in\operatorname{Gr}_{k}(\Cb^n)$ be a generic hyperplane arrangement.
  Then under the identification $\gamma^{-1}(\Ac)\simeq\Cb^n/W(\Ac)$, the codimension-one part of the stratification induced by the intersections
  \begin{equation*}
    \gamma^{-1}(\Ac)\cap R_M
  \end{equation*}
  with the matroid strata $R_M\subset\operatorname{Gr}_{k+1}(\Cb^{n+1})$ agrees with the essentialization of the Manin--Schechtman arrangement of $\Ac$.
\end{prop}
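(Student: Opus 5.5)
The plan is to prove the statement by comparing, over each point of the fiber $\gamma^{-1}(\Ac)$, the matroid of the coned affine arrangement with the intersection pattern of the affine hyperplanes. First, fix normal vectors $\alpha_1,\dots,\alpha_n$ for $\Ac$, so that a point of $\gamma^{-1}(\Ac)$ is a class $[t]\in\Cb^n/W(\Ac)$ with representative affine arrangement $\Ac^t$. Its cone $c\Ac^t\subset\Cb^{k+1}$ has normal vectors $(\alpha_i,-t_i)$ for $i\le n$, together with $e_{k+1}$ for $H_{n+1}$. The matroid of $c\Ac^t$ is determined by which $(k+1)$-subsets of these $n+1$ vectors are dependent, since a rank-$(k+1)$ matroid is determined by its bases.

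Next, classify the dependent $(k+1)$-subsets. A subset containing $n+1$ is dependent iff the remaining $k$ vectors $\alpha_i$ are dependent, which never happens because $\Ac$ is generic. A subset $I\subset[n]$ with $\abs{I}=k+1$ is dependent iff $\det\bigl((\alpha_i,-t_i)_{i\in I}\bigr)=0$. Expanding along the last coordinate, this is exactly $\alpha_I(t)=0$, i.e.\ $t\in D_I$. Hence the matroid $M(t)$ of $c\Ac^t$ is uniform $U_{k+1,n+1}$ iff $t\notin\bigcup_I D_I$. More generally, $M(t)$ depends only on the set $\{I : t\in D_I\}$, which is invariant under $t\mapsto t+w$ for $w\in W(\Ac)$ because $D_{[n]}=W(\Ac)$ is the center. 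So the stratification $\{\gamma^{-1}(\Ac)\cap R_M\}$ is the pullback of the partition of $\Cb^n/W(\Ac)$ by "which $D_I$ contain the point". Its open stratum is the complement of the essentialized arrangement, and every other stratum is contained in $\bigcup_I \overline{D_I}$, where $\overline{D_I}=D_I/D_{[n]}$.

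Then identify the codimension-one part. Each stratum is locally closed and is a finite Boolean combination of the linear subspaces $\overline{D_I}$. A stratum whose points lie on at least two distinct hyperplanes $\overline{D_I}\ne\overline{D_J}$ lies in $\overline{D_I}\cap\overline{D_J}$, which has codimension $\ge2$. (Distinct $I$ give distinct hyperplanes, since $\alpha_I$ and $\alpha_J$ have different supports and all entries are nonzero by genericity.) The strata lying on exactly one hyperplane $\overline{D_I}$ form $\overline{D_I}\setminus\bigcup_{J\ne I}\overline{D_J}$, which is dense in $\overline{D_I}$. Its closure is therefore $\overline{D_I}$, so the closure of the codimension-one part is $\bigcup_I\overline{D_I}$. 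This is precisely the essentialization of $\Bc(n,k,\Ac)$, via the identification $\Sb(\Ac)/D_{[n]}\simeq\Cb^n/W(\Ac)$ recalled in the remark of Section~\ref{sec:MSnk-BBA-poset}.

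The main obstacle I expect is bookkeeping rather than mathematics. I must check the sign and coordinate conventions so that $\det((\alpha_i,-t_i))$ agrees with $\alpha_I(t)$ up to a nonzero scalar; this holds by cofactor expansion, since the cofactors are exactly the entries $(\alpha_I)_{i_p}$. I also need to make precise what "codimension-one part of the stratification" means; I will interpret it as the union of strata of codimension one, or its closure.
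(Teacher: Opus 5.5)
Your proposal is correct and follows essentially the same route as the paper. In both, the dependent $(k+1)$-subsets of the coned normal vectors $(\alpha_i,-t_i)$ and $e_{k+1}$ are exactly the $I\subset[n]$ with $t\in D_I$, since subsets containing $n+1$ never degenerate by genericity; these conditions are invariant under translation by $W(\Ac)=D_{[n]}$, and one passes to the quotient. Your cofactor computation and your density argument for $\overline{D_I}\setminus\bigcup_{J\neq I}\overline{D_J}$ make explicit what the paper's discussion before the proposition leaves informal.
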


\subsubsection{$V^\circ_{(\Tb,r)}$ as the closure of the image of a matroid stratum}

We now explain how the varieties $V^\circ_{(\Tb,r)}$ are related to projections of matroid strata.
Let $\Tb\subset 2^{[n]}$ be an antichain satisfying \eqref{item:Q1} and \eqref{item:Q2}.
We associate to $\Tb$ the rank $k+1$ paving matroid $M(\Tb)=([n+1],\Hc(\Tb))$ whose coatoms are
\begin{equation*}
  \Hc({\Tb})= \Tb \cup \left\{T'\in \binom{[n]}{k}\mathrel{}\middle|\mathrel{} T'\not\subset T \text{ for all } T\in\Tb \right\} \cup \left\{ T'\cup\{n+1\} \mathrel{}\middle|\mathrel{}T'\in \binom{[n]}{k-1} \right\}.
\end{equation*}
Let $ R^{\operatorname{aff}}_{M(\Tb)} =R_{M(\Tb)}\cap \operatorname{GrAff}_{k}(\Cb^n)$ be the corresponding affine matroid stratum. We consider its image under $  \gamma\colon\operatorname{GrAff}_{k}(\Cb^n) \to \operatorname{Gr}_{k}(\Cb^n)$.

Let $\Ac\in \mathrm{Gr}_k^\circ(\Cb^n)$ be a generic arrangement.
Under the identification $\gamma^{-1}(\Ac)\simeq\Sb(\Ac)/D_{[n]}$, the condition that a subset $I\subset[n]$ with $k+1$ elements is dependent in $c\Ac^t$ is equivalent to $\Ac^t\in D_I(\Ac)$.
Equivalently, the affine hyperplanes $\{H_i^{t_i}\mid i\in I\}$ have a non-empty intersection.

Let $\mathbb{J}(\Tb)\coloneq\{I\in \binom{[n]}{k+1}\mid I\not\subset T \text{ for all }T\in\Tb\}$.
Then the set
\begin{equation*}
  \bigcap_{T\in\Tb}D_T(\Ac)\setminus\bigcup_{I\in\mathbb{J}(\Tb)}D_I(\Ac)
\end{equation*}
consists of those translates for which exactly the dependencies among $(k+1)$-subsets of $[n]$ forced by $\Tb$ occur.
Therefore, whenever this set is non-empty, it is equal to $\gamma^{-1}(\Ac)\cap R^{\operatorname{aff}}_{M(\Tb)}$.
In particular, its closure in the fiber $\gamma^{-1}(\Ac)$ is $\bigcap_{T\in\Tb}D_T(\Ac)$.

The condition $\codim\bigcap_{T\in\Tb}D_T(\Ac)=r$ is open inside $V^\circ_{(\Tb,r)}$.
Moreover, for each $I\in\mathbb{J}(\Tb)$, the condition $\bigcap_{T\in\Tb}D_T(\Ac)\not\subset D_I(\Ac)$ is also open.
Indeed, its failure is equivalent to the inclusion of the normal vector of $D_I(\Ac)$ in $\left(\bigcap_{T\in\Tb}D_T(\Ac)\right)^\perp=\sum_{T\in\Tb}D_T(\Ac)^\perp$.
Since $\mathbb{J}(\Tb)$ is finite, the simultaneous non-containment condition for all $I\in\mathbb{J}(\Tb)$ is open.

\begin{thm}\label{prop:V-as-closure-of-matroid-stratum}
  Let $\Tb\subset 2^{[n]}$ be an antichain satisfying \eqref{item:Q1} and \eqref{item:Q2}.
  Suppose that there exists a generic arrangement $\Ac_0$ and an intersection $X\in L(\Bc(n,k,\Ac_0))$ such that $\Tb=\Tb(X)$.
  Let
  \begin{equation*}
    r=\max\{\codim X\mid \Tb=\Tb(X),X\in L(\Bc(n,k,\Ac)),\Ac\in \operatorname{Gr}_k^\circ(\Cb^n)\}.
  \end{equation*}
  If $V^\circ_{(\Tb,r)}$ is irreducible, then we have
  \begin{equation*}
    \overline{\gamma\left(R^{\operatorname{aff}}_{M(\Tb)}\right)}=V^\circ_{(\Tb,r)}.
  \end{equation*}
  Here the closure is taken in $\operatorname{Gr}_{k}^{\circ}(\Cb^n)$.
\end{thm}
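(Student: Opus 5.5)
We prove the two inclusions separately. Throughout, the discussion just before the statement is used in the following form: for a generic $\Ac$, whenever the set
\begin{equation*}
  U(\Ac)\coloneq\bigcap_{T\in\Tb}D_T(\Ac)\setminus\bigcup_{I\in\mathbb{J}(\Tb)}D_I(\Ac)
\end{equation*}
is non-empty, it equals $\gamma^{-1}(\Ac)\cap R^{\operatorname{aff}}_{M(\Tb)}$. Hence $\gamma(R^{\operatorname{aff}}_{M(\Tb)})\cap\operatorname{Gr}^\circ_k(\Cb^n)$ is exactly the set of generic $\Ac$ with $U(\Ac)\neq\emptyset$.

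\textbf{Step 1: the image lies in $V^\circ_{(\Tb,r)}$.}
Let $\Ac$ be generic with $U(\Ac)\ne\emptyset$, and put $X=\bigcap_{T\in\Tb}D_T(\Ac)$. We claim $\Tb(X)=\Tb$.
\begin{itemize}
  \item Each $D_T$ with $T\in\Tb$ contains $X$.
  \item Maximality: if $X\subset D_{T\cup\{i\}}$ for some $i\notin T$, then $X\subset D_I$ for some $I\ni i$ with $I\in\binom{T\cup\{i\}}{k+1}$. This $I$ is not contained in any member of $\Tb$, by the antichain condition together with \eqref{item:Q2}. So $I\in\mathbb{J}(\Tb)$, contradicting $U(\Ac)\neq\emptyset$.
  \item No extra components: any component $D_{T'}$ of $X$ is a union of $(k+1)$-sets in $\bigcup_T\binom{T}{k+1}$. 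Using \eqref{item:Q2}, these sets all lie in a single $T$, so $T'\subset T$, and maximality gives $T'=T$.
\end{itemize}
Therefore $\codim X\le r$ by the definition of $r$, and so $\Ac\in V^\circ_{(\Tb,r)}$. Since $V^\circ_{(\Tb,r)}$ is closed in $\operatorname{Gr}^\circ_k(\Cb^n)$, it contains the closure of the image.

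\textbf{Step 2: the image is dense.}
Choose $\Ac_1$ and $X_1$ attaining the maximum, so that $\Tb(X_1)=\Tb$ and $\codim X_1=r$. Consider the subset $V'\subset V^\circ_{(\Tb,r)}$ where
\begin{itemize}
  \item $\codim\bigcap_{T\in\Tb} D_T=r$, and
  \item $\bigcap_T D_T\not\subset D_I$ for all $I\in\mathbb{J}(\Tb)$.
\end{itemize}
By the remarks preceding the theorem, both conditions are Zariski open inside $V^\circ_{(\Tb,r)}$, and $\mathbb{J}(\Tb)$ is finite.

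For $\Ac\in V'$, each $D_I\cap\bigcap_T D_T$ with $I\in\mathbb{J}(\Tb)$ is a proper linear subspace of $\bigcap_TD_T$. The complement of finitely many proper subspaces is non-empty, so $U(\Ac)\neq\emptyset$ and $\Ac\in\gamma(R^{\operatorname{aff}}_{M(\Tb)})$.

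We need $V'\neq\emptyset$, and $\Ac_1\in V'$ is the witness. Its codimension is $r$ by choice. Moreover $\Tb(X_1)=\Tb$ forces $X_1\not\subset D_I$ for $I\in\mathbb{J}(\Tb)$: otherwise $I$ would lie in the set of some component $D_{T''}$ of $X_1$, and that $T''$ would not be contained in a member of $\Tb$.

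So $V'$ is a non-empty open subset of the irreducible variety $V^\circ_{(\Tb,r)}$, hence dense. Since $V'\subset\gamma(R^{\operatorname{aff}}_{M(\Tb)})$, we get $V^\circ_{(\Tb,r)}=\overline{V'}\subset\overline{\gamma(R^{\operatorname{aff}}_{M(\Tb)})}$.

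\textbf{Main obstacle.}
The delicate point is the combinatorial identification $\Tb(X)=\Tb$ from $U(\Ac)\neq\emptyset$ in Step 1, together with its converse in Step 2. The issue is to ensure that no unexpected component arises from a union of $(k+1)$-subsets drawn from different members of $\Tb$. Condition \eqref{item:Q2} should control this, but it will need careful handling of subsets meeting two members of $\Tb$.
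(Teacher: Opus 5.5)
Your proposal is correct and follows the paper's proof essentially step for step. The paper shows $\gamma(R^{\operatorname{aff}}_{M(\Tb)})\subset V^\circ_{(\Tb,r)}$ because the canonical presentation is $\Tb$ and $r$ is maximal, and it gets the reverse inclusion from the non-empty open set $U_\Tb$ (your $V'$), which is dense by irreducibility and contained in the image. Your extra checks make explicit what the paper attributes to the ``fiberwise description'': the (Q2) argument ruling out spurious components, the witness $\Ac_1$ lying in $V'$, and non-emptiness of $U(\Ac)$ as the complement of finitely many proper subspaces. The one step worth writing out is that any two $(k+1)$-subsets of $T'$ are joined by a chain of $(k+1)$-subsets in which consecutive members share $k$ elements; this is what lets (Q2) place them all in a single member of $\Tb$.
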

\begin{proof}
  By the fiberwise description above, if $\Ac\in \gamma(R^{\operatorname{aff}}_{M(\Tb)})$, then $\bigcap_{T\in\Tb}D_T(\Ac)$ has canonical presentation $\Tb$.
  Hence, by the definition of $r$, its codimension is at most $r$.
  Thus $\gamma(R^{\operatorname{aff}}_{M(\Tb)})\subset V^\circ_{(\Tb,r)}$, and the same inclusion holds after taking closures.

  Define
  \begin{equation*}
    U_\Tb=\left\{\Ac\in V^\circ_{(\Tb,r)}\mathrel{}\middle|\mathrel{}\codim\bigcap_{T\in\Tb}D_T(\Ac)=r\text{ and }\bigcap_{T\in\Tb}D_T(\Ac)\not\subset D_I(\Ac)\text{ for all } I\in\mathbb{J}(\Tb)\right\}.
  \end{equation*}
  The preceding paragraph shows that $U_\Tb$ is open in $V^\circ_{(\Tb,r)}$.
  It is non-empty by the definition of $r$, because $\Tb$ occurs as the canonical presentation of an intersection of codimension $r$.
  Since $V^\circ_{(\Tb,r)}$ is irreducible, $U_\Tb$ is dense in $V^\circ_{(\Tb,r)}$.

  Again by the fiberwise description, every $\Ac\in U_\Tb$ belongs to $\gamma(R^{\operatorname{aff}}_{M(\Tb)})$.
  Hence $U_\Tb\subset \gamma(R^{\operatorname{aff}}_{M(\Tb)})$ holds.
  Therefore, we have $V^\circ_{(\Tb,r)}=\overline{U_\Tb}\subset\overline{\gamma(R^{\operatorname{aff}}_{M(\Tb)})}\subset V^\circ_{(\Tb,r)}$.
  This proves the assertion.
\end{proof}

By combining this with \zcref{thm:BB-poset} and \zcref{prop:hypersurface}, we obtain the following two corollaries.

\begin{cor}\label{cor:very-generic-strata}
  Let $\Tb\in P(n,k)$ and let $r=\sum_{T\in\Tb}(\abs{T}-k)$.
  Then we have $  \overline{\gamma\left(R^{\operatorname{aff}}_{M(\Tb)}\right)}={V^\circ_{(\Tb,r)}}=\mathrm{Gr}_k^\circ(\Cb^n)$.
\end{cor}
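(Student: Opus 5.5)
The plan is to establish the two equalities separately, using \zcref{prop:V-as-closure-of-matroid-stratum} for the left one and a lattice count for the right one.

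\textbf{Right-hand equality.} Start from $\Tb\in P(n,k)$ and $r=\sum_{T\in\Tb}(\abs{T}-k)$, and take any very generic $\Ac\in\Oc(n,k)$. By \zcref{thm:BB-poset}, the intersection $X=\bigcap_{T\in\Tb}D_T(\Ac)$ has codimension exactly $r$. Hence $\Oc(n,k)\subset V^\circ_{(\Tb,r)}$. Conversely, for every generic $\Ac$ the subspace $\bigl(\bigcap_{T}D_T\bigr)^\perp=\sum_T D_T^\perp$ is a sum of spaces of dimensions $\abs{T}-k$. Its dimension is therefore at most $r$, so $\codim\le r$ always holds, and $V^\circ_{(\Tb,r)}=\mathrm{Gr}_k^\circ(\Cb^n)$ directly from the definition. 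In particular $V^\circ_{(\Tb,r)}$ is irreducible, being a nonempty Zariski open subset of the irreducible Grassmannian.

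\textbf{Left-hand equality.} I will check the hypotheses of \zcref{prop:V-as-closure-of-matroid-stratum}.

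First, $\Tb$ must satisfy \eqref{item:Q1} and \eqref{item:Q2}. Condition (Q1) is condition (i) of $P(n,k)$. For (Q2), suppose a $k$-set lies in two distinct members $T,T'$. Then $\abs{T\cup T'}\le\abs{T}+\abs{T'}-k$, so $\abs{T\cup T'}-k\le(\abs{T}-k)+(\abs{T'}-k)$, which contradicts (iii) for $\Tb'=\{T,T'\}$.

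Second, I need a generic $\Ac_0$ and an $X$ with $\Tb(X)=\Tb$. I take $\Ac_0$ very generic and $X=\bigcap_T D_T$. By \zcref{thm:BB-poset}, $X$ corresponds to $\Tb$ in $P(n,k)$, and the canonical presentation agrees with the poset element. Indeed, $X\subset D_{T\cup\{i\}}$ would correspond to $\{T\cup\{i\}\}\preceq\Tb$ in the lattice, which is impossible since $\Tb$ is an antichain whose members are the maximal sets.

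Third, the maximal codimension appearing in \zcref{prop:V-as-closure-of-matroid-stratum} is at least $r$, by the very generic realization. It is also at most $r$ for every generic $\Ac$, by the dimension bound on $\sum_T D_T^\perp$ above. So this maximum equals $r$.

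With these hypotheses verified, \zcref{prop:V-as-closure-of-matroid-stratum} gives $\overline{\gamma(R^{\operatorname{aff}}_{M(\Tb)})}=V^\circ_{(\Tb,r)}$.

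The main obstacle I expect is the identification $\Tb(X)=\Tb$ for very generic $\Ac_0$, specifically matching the component-based canonical presentation with the poset element. If the lattice argument is unclear, I would argue directly. Suppose $X\subset D_{T\cup\{i\}}$. Then $\Tb\cup\{T\cup\{i\}\}$ minus $T$ defines the same intersection in $L(\Bc(n,k,\Ac_0))\cong P(n,k)$. This would give two distinct elements of $P(n,k)$ with the same image, and that contradicts injectivity of the isomorphism.
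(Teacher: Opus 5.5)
Your proposal is correct and follows the paper's proof. The bound $\dim\sum_{T\in\Tb}D_T^\perp\le r$ gives $V^\circ_{(\Tb,r)}=\mathrm{Gr}_k^\circ(\Cb^n)$, which is irreducible, and a very generic realization supplies $X$ with $\Tb(X)=\Tb$ and $\codim X=r$, so \zcref{prop:V-as-closure-of-matroid-stratum} applies. Beyond the paper, you make explicit the checks of (Q1)/(Q2) and that the maximal codimension in that theorem equals $r$. Your fallback ``direct'' argument would require the modified collection to lie in $P(n,k)$, which is not guaranteed, but your lattice argument already suffices.
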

\begin{proof}
  For every $\Ac\in \mathrm{Gr}_k^\circ(\Cb^n)$, we have $\codim \bigcap_{T\in\Tb}D_T(\Ac)\le r$.
  Hence $V^\circ_{(\Tb,r)}=\mathrm{Gr}_k^\circ(\Cb^n)$.
  Moreover, by \zcref{thm:BB-poset}, for a very generic arrangement $\Ac$, the element $\Tb\in P(n,k)$ corresponds to $X=\bigcap_{T\in\Tb}D_T(\Ac)$ with $\codim X=\sum_{T\in\Tb}(\abs{T}-k)=r$ and $\Tb(X)=\Tb$.
  Since $\mathrm{Gr}_k^\circ(\Cb^n)$ is irreducible, \zcref{prop:V-as-closure-of-matroid-stratum} gives the claim.
\end{proof}

\begin{cor}\label{cor:non-very-generic-hypersurface-strata}
  Let $\Tb\subset 2^{[n]}$ be an antichain satisfying \eqref{item:Q1} and \eqref{item:Q2}.
  Suppose that $\sum_{T\in\Tb}(|T|-k) = \abs{\bigcup_{T\in\Tb}T}-k = r+1.$
  If $V^\circ_{(\Tb,r)}$ is non-empty and irreducible, then we have $\overline{\gamma\left(R^{\operatorname{aff}}_{M(\Tb)}\right)}=V^\circ_{(\Tb,r)}$.
  In particular, $V^\circ_{(\Tb,r)}$ is a hypersurface in $\mathrm{Gr}_k^\circ(\Cb^n)$.
\end{cor}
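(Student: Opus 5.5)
The plan is to deduce \zcref{cor:non-very-generic-hypersurface-strata} from \zcref{prop:V-as-closure-of-matroid-stratum} and \zcref{prop:hypersurface}. We take as given that $V^\circ_{(\Tb,r)}$ is non-empty and irreducible. What remains is to check the hypotheses of \zcref{prop:V-as-closure-of-matroid-stratum}, and in particular to show that the integer $r$ appearing in the corollary coincides with the maximum codimension $r_{\max}$ defined in that theorem.

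\textbf{Step 1: $V^\circ_{(\Tb,r)}$ is a proper hypersurface.} Since it is non-empty, \zcref{prop:hypersurface} shows it is a hypersurface. In particular it is a proper subset of $\mathrm{Gr}_k^\circ(\Cb^n)$, so a very generic arrangement does not lie in it. For a very generic $\Ac$, the equality $\abs{\bigcup_{T\in\Tb} T}-k=\sum_{T\in\Tb}(\abs{T}-k)$ means that $\bigcap_{T\in\Tb}D_T$ is the element $D_{\bigcup T}$ of $P(n,k)$, whose codimension is $r+1$.

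\textbf{Step 2: identify $r_{\max}$ and produce the intersection $X$.} For any generic $\Ac$, the subspace $\bigcap_{T\in\Tb}D_T(\Ac)$ contains $D_{\bigcup T}(\Ac)$. Hence its codimension is at most $\codim D_{\bigcup T}=r+1$.

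Take a point $\Ac$ in the dense open subset of the irreducible variety $V^\circ_{(\Tb,r)}$ where the rank of $A_\Tb(\Ac)$ attains its generic value. We want this value to be exactly $r$, and we want $\bigcap_{T\in\Tb}D_T(\Ac)\not\subset D_I(\Ac)$ for all $I\in\mathbb{J}(\Tb)$. Then $X=\bigcap_{T\in\Tb}D_T(\Ac)$ has canonical presentation $\Tb$ and codimension $r$.

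Codimension $r+1$ is impossible on $V^\circ_{(\Tb,r)}$, and codimension $r+1$ is the only case in which $\Tb$ would collapse to $\{\bigcup T\}$. Therefore $r_{\max}=r$ as soon as some point of $V^\circ_{(\Tb,r)}$ realizes $\Tb$ as a canonical presentation.

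The last step applies \zcref{prop:V-as-closure-of-matroid-stratum}, which gives $\overline{\gamma(R^{\operatorname{aff}}_{M(\Tb)})}=V^\circ_{(\Tb,r)}$. Combined with Step 1, this yields the hypersurface statement.

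\textbf{Main obstacle.} The hard part is the genericity in Step 2. We must show that on the irreducible hypersurface the rank does not drop below $r$ identically, and that no extra $D_I$ with $I\in\mathbb{J}(\Tb)$ contains the intersection identically. In both cases the condition is closed, so by irreducibility it suffices to exhibit a single point of $V^\circ_{(\Tb,r)}$ avoiding it.

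For the rank, I would argue with the exchange relation from the proof of \zcref{prop:hypersurface}. All $(r+1)$-minors are proportional to one another with nonvanishing factors, so the vanishing locus of the $r$-minors is of higher codimension and cannot contain the hypersurface.

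For the extra hyperplanes $D_I$, the containment condition says that $\alpha_I\in\operatorname{span}$ of the rows of $A_\Tb(\Ac)$. I expect to handle this as in the proofs of \zcref{lem:non-very-3} and \zcref{lem:non-very-2}, by showing that each such condition cuts out a proper closed subset. This is where the argument is least uniform. If a direct argument is unavailable, the corollary should be read as implicitly assuming that $\Tb$ is realized, which is the situation in our applications.
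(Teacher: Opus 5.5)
Your skeleton matches the paper's. You get the hypersurface property from \zcref{prop:hypersurface}, find a non-empty Zariski open subset of $V^\circ_{(\Tb,r)}$ on which $\bigcap_{T\in\Tb}D_T(\Ac)$ has codimension $r$ and canonical presentation $\Tb$, and then apply \zcref{prop:V-as-closure-of-matroid-stratum}. Your explicit check that the maximum in that theorem equals $r$ is correct, and the paper leaves it implicit: codimension $r+1$ would force the intersection to be $D_{\bigcup T}$.

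\textbf{The gap: the extra hyperplanes.} The two genericity statements are the real content, and neither is proved. For the extra hyperplanes you stop and propose reading the corollary as assuming $\Tb$ is realized. But realizability is exactly what must be deduced from non-emptiness and irreducibility, so this weakens the statement rather than proving it. The paper splits $I\in\mathbb{J}(\Tb)$ into two cases.
\begin{enumerate}
\item If $I\not\subset\bigcup_{T\in\Tb}T$, the containment never occurs. The entries of $\alpha_I$ are, up to sign, $k\times k$ Pl\"ucker coordinates, so they are non-zero on $\mathrm{Gr}_k^\circ(\Cb^n)$. Hence $\alpha_I$ has a non-zero coordinate outside $\bigcup_{T\in\Tb}T$, where every row of $A_\Tb(\Ac)$ vanishes.
\item If $I\subset\bigcup_{T\in\Tb}T$, the paper replaces a row of the $(r+1)$-minor defining $V^\circ_{(\Tb,r)}$ by $\alpha_I$. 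It notes the resulting determinant is not a scalar multiple of the defining equation, so the condition is a proper closed subset of the irreducible $V^\circ_{(\Tb,r)}$. Strictly, what is needed is that this determinant does not vanish identically on $V^\circ_{(\Tb,r)}$.
\end{enumerate}
You needed at least this case split. The first case is elementary and is missing from your plan.

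\textbf{The rank argument.} Your justification is a non-sequitur. The exchange relation in the proof of \zcref{prop:hypersurface} shows that all $(r+1)$-minors are proportional with nowhere-vanishing factors. That is why a single equation cuts out $V^\circ_{(\Tb,r)}$, but it says nothing about the $r$-minors. A structured matrix can drop rank by two along an entire hypersurface, for example $f$ times the $2\times 2$ identity matrix. What you need is one point of $V^\circ_{(\Tb,r)}$ at which some $r$-minor of $A_\Tb(\Ac)$ is non-zero; irreducibility then does the rest. The paper itself only asserts that some $r$-minor is not identically zero on $V^\circ_{(\Tb,r)}$, so this step is thin there too. Still, the reason you give does not establish it.
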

\begin{proof}
  By \zcref{prop:hypersurface}, $V^\circ_{(\Tb,r)}$ is a hypersurface defined by a non-zero $(r+1)$-minor of $A_\Tb(\Ac)$.
  The stronger rank condition $\operatorname{rank}A_\Tb(\Ac)\le r-1$
  is a proper closed condition on this hypersurface, since some
  $r$-minor of $A_\Tb(\Ac)$ is not identically zero on it.

  Moreover, for each $I\in\binom{[n]}{k+1}$ not contained in any member of $\Tb$, the additional containment $\bigcap_{T\in\Tb}D_T(\Ac)\subset D_I(\Ac)$ is a closed condition and does not hold identically on $V^\circ_{(\Tb,r)}$.
  Indeed, if $I\not\subset\bigcup_{T\in\Tb}T$, this is clear from the supports of the corresponding normal vectors.
  If $I\subset\bigcup_{T\in\Tb}T$, it follows by replacing one row in the determinant used in the proof of \zcref{prop:hypersurface} by $\alpha_I$; the resulting determinant is not a scalar multiple of the defining equation of $V^\circ_{(\Tb,r)}$.

  Since there are only finitely many such $I$ and
  $V^\circ_{(\Tb,r)}$ is irreducible, the complement of these proper closed subsets is non-empty and Zariski open.
  On this open subset, the codimension is $r$, and no additional component occurs, hence the canonical presentation is $\Tb$.

  Therefore, there exists a non-empty Zariski open subset of $V^\circ_{(\Tb,r)}$ on which $\bigcap_{T\in\Tb}D_T(\Ac)$ has codimension $r$ and canonical presentation $\Tb$.
  Thus \zcref{prop:V-as-closure-of-matroid-stratum} applies and gives the desired equality.
\end{proof}

\zcref{cor:very-generic-strata} and \ref{cor:non-very-generic-hypersurface-strata} can be restated in terms of matroid strata as follows.
\begin{cor}
  Let $M=([n+1],\Fc)$ be a representable paving matroid of rank $k+1$, and let
  $\Tb=\{F\in \Hc(M)\mid \abs{F}>k\}$.
  Suppose that
  \begin{equation*}
    \left\{T'\cup\{n+1\}\mathrel{}\middle|\mathrel{}T'\in\binom{[n]}{k-1}\right\}\subset \Hc(M).
  \end{equation*}
  Then the following holds.
  \begin{itemize}
    \item If $\Tb\in P(n,k)$, then we have $  \overline{\gamma\left(R^{\operatorname{aff}}_{M}\right)}=\mathrm{Gr}_k^\circ(\Cb^n)$.
    \item Suppose that
          $\sum_{T\in\Tb}(\abs{T}-k)=\abs{\bigcup_{T\in\Tb}T}-k=r+1$.
          If $V^\circ_{(\Tb,r)}$ is non-empty and irreducible, then $\overline{\gamma\left(R^{\operatorname{aff}}_{M}\right)}$ is a hypersurface in $\mathrm{Gr}_k^\circ(\Cb^n)$.
  \end{itemize}
\end{cor}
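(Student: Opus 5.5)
The corollary follows by combining \zcref{cor:very-generic-strata} and \zcref{cor:non-very-generic-hypersurface-strata}. The point is to check that the hypotheses on $M$ translate into $M = M(\Tb)$ for the given $\Tb$, and that $\Tb$ satisfies \eqref{item:Q1} and \eqref{item:Q2}.

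\textbf{Step 1: $\Tb$ is an admissible antichain.} Let $\Tb=\{F\in\Hc(M)\mid \abs{F}>k\}$. Its members are coatoms of $M$, hence pairwise incomparable, so $\Tb$ is an antichain. Each member has at least $k+1$ elements, which is \eqref{item:Q1}. For \eqref{item:Q2}, note that two distinct coatoms of a rank $k+1$ matroid meet in a flat of rank at most $k-1$. Since $M$ is paving, every set of size at most $k$ is independent, so this intersection has at most $k-1$ elements. Hence no $k$-subset lies in two members of $\Tb$.

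We also need $\Tb\subset 2^{[n]}$. The element $n+1$ lies in no member of $\Tb$: each $T'\cup\{n+1\}$ with $T'\in\binom{[n]}{k-1}$ is a coatom of size $k$. Any coatom $F\ni n+1$ with $\abs{F}>k$ would contain such a $k$-set, which is itself a coatom. This contradicts the coatoms forming an antichain.

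\textbf{Step 2: $\Hc(M)=\Hc(\Tb)$.} Since $M$ is paving of rank $k+1$, every $k$-subset $S\subset[n+1]$ spans a unique coatom $\operatorname{cl}_M S$. The coatoms are therefore $\Tb$ together with those $k$-subsets contained in no member of $\Tb$. The $k$-subsets containing $n+1$ are all coatoms by hypothesis. The $k$-subsets of $[n]$ that are coatoms are exactly those not contained in any $T\in\Tb$. This is precisely the description of $\Hc(\Tb)$, and since coatoms determine the matroid, $M=M(\Tb)$ and $R^{\operatorname{aff}}_M=R^{\operatorname{aff}}_{M(\Tb)}$.

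\textbf{Step 3: the two cases.} If $\Tb\in P(n,k)$, \zcref{cor:very-generic-strata} with $r=\sum_{T\in\Tb}(\abs{T}-k)$ gives
\[
\overline{\gamma\left(R^{\operatorname{aff}}_{M}\right)}=\mathrm{Gr}_k^\circ(\Cb^n).
\]
In the second case the numerical hypothesis together with nonemptiness and irreducibility of $V^\circ_{(\Tb,r)}$ is exactly the hypothesis of \zcref{cor:non-very-generic-hypersurface-strata}. That corollary gives $\overline{\gamma(R^{\operatorname{aff}}_M)}=V^\circ_{(\Tb,r)}$, which is a hypersurface.

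Representability of $M$ is not used in these deductions. It only ensures that the stratum in question is nonempty and hence meaningful. The main obstacle is Step 2, specifically handling the coatoms containing $n+1$ carefully via the paving property; the remaining steps are direct citations.
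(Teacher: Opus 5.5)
Your proposal is correct and follows the paper's argument. The coatom hypothesis gives $M=M(\Tb)$, and the two bullets are then exactly \zcref{cor:very-generic-strata} and \zcref{cor:non-very-generic-hypersurface-strata}. The paper states $M=M(\Tb)$ in one line, while you check it explicitly: the antichain property, (Q1), (Q2), $n+1\notin\bigcup_{T\in\Tb}T$, and $\Hc(M)=\Hc(\Tb)$ via the paving property.
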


\begin{proof}
  The assumptions on the coatoms imply $M=M(\Tb)$.
  Thus, the first assertion follows from \zcref{cor:very-generic-strata}, and the second assertion follows from \zcref{cor:non-very-generic-hypersurface-strata}.
\end{proof}
\bibliographystyle{amsalpha}
\bibliography{non_Kpi1}
\end{document}